\crefname{section}{Section}{Sections}
\crefname{figure}{Figure}{Figures}
\crefname{table}{Table}{Tables}
\crefname{equation}{}{}
\crefname{theorem}{Theorem}{Theorems}
\crefname{lemma}{Lemma}{Lemmas}
\crefname{remark}{Remark}{Remarks}
\crefname{problem}{Problem}{Problems}
\newtheorem{theorem}{Theorem}[section]
\newtheorem{example}{Example}[section]
\newtheorem{problem}{Problem}[section]
\newtheorem{remark}{Remark}[section]
\newtheorem{lemma}{Lemma}[section]
\newtheorem{definition}{Definition}[section]
\begin{document}
	
\title{A direct imaging method for the exterior and interior inverse scattering  problems}

\author{
	Deyue Zhang\thanks{School of Mathematics, Jilin University, Changchun, China, {\it dyzhang@jlu.edu.cn}},
	Yue Wu\thanks{School of Mathematics, Jilin University, Changchun, China, {\it wuy20@mails.jlu.edu.cn}}, 
    Yinglin Wang\thanks{School of Mathematics, Jilin University, Changchun, China, {\it yinglin19@mails.jlu.edu.cn}}
    \ and 
	Yukun Guo\thanks{School of Mathematics, Harbin Institute of Technology, Harbin, China. {\it ykguo@hit.edu.cn} (Corresponding author)}
}
\date{}

\maketitle

\begin{abstract}
	This paper is concerned with the inverse acoustic scattering problems by an obstacle or a cavity with a sound-soft or a sound-hard boundary. A direct imaging method relying on the boundary conditions is proposed for reconstructing the shape of the obstacle or cavity. First, the scattered fields are approximated by the Fourier-Bessel functions with the measurements on a closed curve. Then, the indicator functions are established by the superpositions of the total fields or their derivatives to the incident point sources. We prove that the indicator functions vanish only on the boundary of the obstacle or cavity.  Numerical examples are also included to demonstrate the effectiveness of the method.
\end{abstract}

\noindent{\it Keywords}: direct imaging, inverse obstacle scattering, inverse cavity scattering, Fourier-Bessel expansion


\section{Introduction}

The inverse scattering problems are of significant importance in diverse applications such as radar sensing, sonar detection and biomedical imaging (see, e.g. \cite{Colton}). In archetypal inverse scattering problems, the target objects are illuminated by an incident wave coming from their exterior and the corresponding scattering data is measured from the outside as well. Determining the unknown scatterer from such externally accessed information constitutes the exterior inverse scattering problems. In the last three decades, a huge number of computational attempts have been made to solve the exterior inverse scattering problems of identifying impenetrable obstacles or penetrable medium. Typical numerical strategies developed for the exterior inverse scattering problems include the decomposition methods, iteration schemes, recursive linearization based algorithms and the sampling approaches (see, e.g. \cite{BLLT15, Cakoni1, CK18, Colton}). We also refer to \cite{BL20, CDLZ20, DCL21} for some recent studies on the unique recovery issues in inverse scattering theory. In addition, inverse scattering problems without the phase information receive great interests recently. Some uniqueness results and numerical methods on the exterior inverse scattering problems with phaseless data can be found in \cite{BZ16, CH17, DZG19, JLZ19b, KR17, LL15, LLW17, ZZ17, ZG18a, ZG21}.

As the interior counterparts of the aforementioned exterior problems, the interior inverse scattering problems for recovering the shape of cavities rely on the signals due to interior emitters and sensors, which arise in many practical areas of non-destructive testing and reservoir exploration \cite{Jakubik, QC12a}. In contrast to the exterior inverse scattering problems, the interior inverse scattering problems are usually more complicated to tackle because of the repeated reflections of the 
inescapably trapped scattering waves. For the numerical reconstruction algorithms concerning interior inverse scattering problems, we refer to \cite{Hu, Liu14, Qin0, QC12a, QC12b, SGM16, Zeng1,  Zeng} for the linear sampling method, the regularized Newton iterative method, the decomposition method, the factorization method and the reciprocity gap functional method. There have also been some mathematical study on the inverse cavity problems. We refer to \cite{Hu, Liu14, QC12a, QC12b, Qin, Zeng} for some uniqueness results with full data (both the intensity and phase). A recent result on uniqueness of the inverse cavity scattering with phaseless data was established in \cite{ZWGL20} by utilizing the reference ball technique in conjunction with the superposition of incident point sources. 

In this paper, we consider the incident point sources and deal with the reconstruction of the shape of a sound-soft (or sound-hard) obstacle (or cavity). We propose a novel imaging scheme to determine the shape of unknown scatterer with known boundary conditions. Motivated by the Fourier-Bessel method for solving the Cauchy problems for the Helmholtz equation \cite{LZZL17, ZSMG20, ZS16}, we first make the approximation of the scattered fields by the expansion of Fourier-Bessel functions from the measurements on a closed curve. Then, by utilizing the a priori boundary conditions of the obstacle or the cavity, we introduce the associated indicator functions with the superposition of the approximated total fields or their derivatives with respect to the incident point sources. It is proved that the indicator functions vanish only on the boundary of the obstacle or cavity. Therefore, in the last step, profile of the underlying obstacle/cavity can be recognized as the zeros of the indicator functions depicted over a suitably chosen imaging domain. Mathematical analysis of the stability is presented to justify the theoretical foundations of the method.  Numerical examples are also included to demonstrate the effectiveness of the method.

In our opinion, the interesting novelties of the proposed method can be highlighted as follows. First, the proposed method can be viewed as a direct imaging method since the forward solver or the iteration process is not needed. Hence it is convenient to implement without time-consuming computations.  Second, the unified framework of inversion scheme can be well applied to both the exterior and interior problems with sound-soft or sound-hard boundary conditions, whereas the most existing methods for exterior problems produce drastically low-quality reconstructions for imaging the cavity. Third, rigorous mathematical justifications are provided to characterize the approximation properties with respect to the noisy data, which essentially guarantee the robustness of the algorithm. Finally, to our best knowledge, this is the first attempt in the literature towards combining the idea of direct imaging and the technique of Fourier-Bessel expansion for solving inverse scattering problems.

Throughout this paper, we assume that $D\subset\mathbb{R}^2$ is an open and simply connected domain with $C^2$ boundary $\partial D$.  Let $k>0$ be the wave number and $H_n^{(1)}$ be the Hankel function of the first kind of order $n$. For a generic point $z\in\mathbb{R}^2$, the incident field $u^i$ due to the point source located at $z$ is given by the fundamental solution to the Helmholtz equation
$$
	u^i (x; z):=\frac{\mathrm{i}}{4}H_0^{(1)}(k|x-z|)
$$
where
$$
\begin{cases}
	z\in \mathbb{R}^2\backslash\overline{D},\  x\in\mathbb{R}^2\backslash\overline{D}\cup\{z\}, & \text{for exterior problem},\\
	z\in D,\ x\in D\backslash\{z\}, & \text{for interior problem}.
\end{cases}
$$

The total field $u$ is the superposition of the incident field $u^i$ and the scattered field $u^s$, namely, $u=u^i+u^s$. We shall also employ $u(x; z)=u^i(x; z)+u^s(x; z)$ to indicate the dependence of these wave fields on the point source location $z$ and the wave number $k$. To characterize the physical properties of distinct scatterers, the boundary operator $\mathscr{B}$ is introduced by
\begin{align}\label{BC}
	\mathscr{B}u=
	\begin{cases}
		u, & \text{for a sound-soft obstacle/cavity},  \\
		\dfrac{\partial u}{\partial \nu}, & \text{for a sound-hard obstacle/cavity},
	\end{cases}
\end{align}
where $\nu$ is the unit outward normal to $\partial D$. 

The rest of this paper is arranged as follows. In the next section, we present the imaging method for the inverse obstacle scattering problem, including the model problem, uniqueness, indicator function as well as the Fourier-Bessel approximation. Then the interior counterparts for the inverse cavity scattering problem are discussed in \cref{sec:cavity}. Next, numerical validations and discussions of the proposed method are illustrated in \cref{sec:example}. Finally, some concluding remarks are given in section \cref{sec:conclusion}.

\section{Inverse obstacle scattering problem}\label{sec:obstacle}

We begin this section with the mathematical formulations of the model exterior scattering problem. The obstacle scattering problem can be formulated as: find the scattered field $u^s\in H^1_{\rm loc}(\mathbb{R}^2\backslash\overline{D})$ satisfying the following boundary value problem:
\begin{align}
    \Delta u^s+ k^2 u^s= &\, 0\quad \text{in}\ \mathbb{R}^2\backslash\overline{D},\label{Oeq:Helmholtz} \\
	\mathscr{B}(u^i+u^s)= &\, 0 \quad \text{on}\ \partial D, \label{OBC} \\
	\frac{\partial u^s}{\partial r}-\mathrm{i} ku^s= &\, o\left(r^{-1/2}\right), \quad r=|x| \to \infty, \label{SRC}
\end{align}
where the Sommerfeld radiation condition \cref{SRC} holds uniformly in all directions $x/|x|$. The existence of a solution to the direct scattering problem \cref{Oeq:Helmholtz,OBC,SRC} is well known (see, e.g., \cite{Colton}).

With these preparations,  the inverse obstacle scattering problem can be stated as the following.
\begin{problem}[Inverse obstacle scattering]\label{prob:obstacle}
	Let $D\subset B_\rho=\{x\in \mathbb{R}^2: |x|<\rho \}$ be the impenetrable obstacle with boundary condition $\mathscr{B}$ and $\Gamma\subset \mathbb{R}^2\backslash\overline{D}$  be a curve. Given the near-field data
	$$
		\{u(x; z):  x\in\partial B_\rho,\ z\in \Gamma\},
	$$
	for a fixed wave number $k$, determine the boundary $\partial D$ of the obstacle.
\end{problem}

We refer to \cref{fig:model_obstacle} for an illustration of the geometry setting of Problem \ref{prob:obstacle}. 
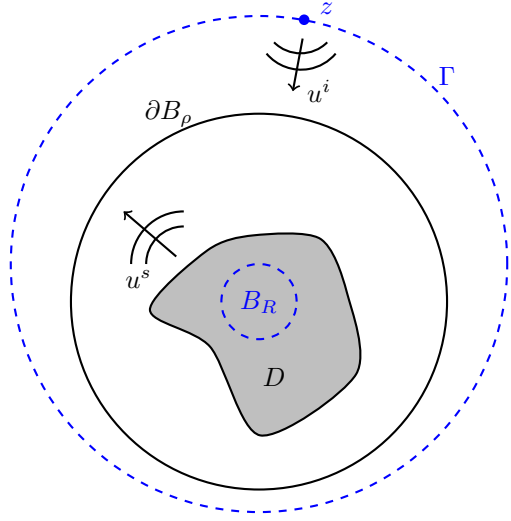
\begin{figure}
	\centering
	\begin{tikzpicture}[thick]
		\pgfmathsetseed{2021}
		\draw plot [smooth cycle, samples=8, domain={1:8}, xshift=1cm, yshift=0.5cm] (\x*360/8+5*rnd:0.8cm+1cm*rnd) [fill=lightgray] node at (1.2, -0.5) {$D$};
		\draw  (1, 0.5) circle (2.5cm) node at (-0.2, 3) {$\partial B_\rho$}; 
		\draw [dashed, blue] (1, 0.5) circle (0.5cm) node at (1, 0.5) {$B_R$}; 
		\draw [->] (-0.1, 1.1)--(-0.8, 1.7) node at (-0.6, 0.8) {$u^s$};
		\draw (0, 1.5) arc(90:180:0.5cm);
		\draw (0, 1.7) arc(90:180:0.7cm);
		\fill [blue] (1.6, 4.25) circle (2pt) node at (1.9, 4.4) {$z$};  
		\draw (1.2, 3.95) arc(225:315:0.5cm);
		\draw (1.1, 3.8) arc(220:320:0.6cm);
		\draw [->] (1.58, 4)--(1.45, 3.3) node at (1.8, 3.3) {$u^i$};
		\draw [blue, dashed] (1, 1) circle (3.3cm) node at (3.5, 3.5) {$\Gamma$}; 
	\end{tikzpicture}
	\caption{An illustration of the inverse obstacle scattering problem.} \label{fig:model_obstacle}
\end{figure}

\subsection{Uniqueness and the indicator functions}

From Theorem 2.1 in \cite{Colton}, we know that  the near-field data $\{u(x; z):  x\in\partial B_\rho,\ z\in \Gamma\}$ can uniquely determine the boundary $\partial D$ of the obstacle. The following theorem plays an important role in our numerical algorithm.
\begin{theorem}\label{Thm2.1}
	Let $D$ be a sound-soft obstacle. Assume that $\Lambda$ is the boundary of domain $D'$ and the total fields satisfy that
	\begin{equation}
		\int_{\Gamma}\int_{\Lambda}|u(x; z)| \mathrm{d}s(x)\mathrm{d}s(z)=0  \label{obstacle_condition1}.
	\end{equation}
	Then we have  $D'=D$ and $\Lambda=\partial D$.
\end{theorem}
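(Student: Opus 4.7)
The plan is to argue by contradiction, combining the reciprocity relation for the sound-soft exterior scattering problem with unique continuation for the Helmholtz equation, and exploiting the Green's-function singularity of the point-source incident field to extract the contradiction.

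First I would pass from the integrated statement \cref{obstacle_condition1} to a pointwise identity: in the natural setting where $\Lambda$ and $\Gamma$ are disjoint $C^2$ curves lying in $\mathbb{R}^2\setminus\overline{D}$, the map $(x,z)\mapsto u(x;z)$ is continuous on $\Lambda\times\Gamma$, so nonnegativity of $|u|$ together with vanishing of the integral forces $u(x;z)=0$ for every $x\in\Lambda$ and $z\in\Gamma$. Suppose for contradiction that $\Lambda\neq\partial D$; then there exists a point $x^{*}\in\Lambda\setminus\partial D$, and because $\Lambda\subset\mathbb{R}^2\setminus\overline{D}$ (otherwise $u$ would not be defined on $\Lambda$) we must have $x^{*}\in\mathbb{R}^2\setminus\overline{D}$. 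Under the natural assumption that the candidate region $D'$ lies inside the region bounded by $\Gamma$, the point $x^{*}$ is enclosed by $\Gamma$.

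The decisive tool is the reciprocity identity $u^{s}(x;z)=u^{s}(z;x)$ for the sound-soft exterior problem, which follows from Green's second identity applied on the annular region between $\partial D$ and a large circle, together with the Sommerfeld radiation condition. Combined with the obvious symmetry $u^{i}(x;z)=u^{i}(z;x)$, this yields $u(x;z)=u(z;x)$, and evaluating at $x=x^{*}$ converts the pointwise information above into $u(z;x^{*})=0$ for all $z\in\Gamma$. I would then reinterpret $u(\cdot;x^{*})$ as the total field for the scattering problem with point source at $x^{*}$: it solves the Helmholtz equation in $\mathbb{R}^2\setminus(\overline{D}\cup\{x^{*}\})$, vanishes on $\partial D$ by the sound-soft condition, radiates outward in the Sommerfeld sense, and additionally vanishes on $\Gamma$. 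Letting $E$ be the unbounded component of $\mathbb{R}^2\setminus\Gamma$, the restriction $u(\cdot;x^{*})|_{E}$ is a smooth radiating Helmholtz solution with zero Dirichlet data on $\partial E=\Gamma$, and hence vanishes identically on $E$ by uniqueness of the exterior Dirichlet problem.

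Real-analyticity of $u(\cdot;x^{*})$ on the connected open set $\mathbb{R}^2\setminus(\overline{D}\cup\{x^{*}\})$ then propagates this vanishing to the whole set. But $u(\cdot;x^{*})=(\mathrm{i}/4)H_{0}^{(1)}(k|\cdot-x^{*}|)+u^{s}(\cdot;x^{*})$ with $u^{s}(\cdot;x^{*})$ smooth at $x^{*}$, so $u(\cdot;x^{*})$ inherits the logarithmic singularity of $H_{0}^{(1)}$ at $x^{*}$, contradicting $u(\cdot;x^{*})\equiv 0$. This forces $\Lambda=\partial D$ and therefore $D'=D$. The step I expect to be most delicate is the geometric bookkeeping: one must ensure that $x^{*}$ lies strictly inside $\Gamma$ (so that $u(\cdot;x^{*})$ is smooth on $E$ and the exterior-uniqueness step applies) and that $x^{*}\notin\overline{D}$ (so that the final singularity argument has bite). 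Both are guaranteed by the standard geometric conventions of \cref{prob:obstacle}, but they should be tracked carefully.
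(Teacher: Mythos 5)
Your argument is correct under the geometric conventions you flag (a closed measurement curve $\Gamma$ enclosing both $D$ and the candidate region $D'$, consistent with \cref{fig:model_obstacle}), but it follows a genuinely different route from the paper's. The paper runs a Schiffer-type eigenvalue argument: from $u(x;z)=0$ for all $x\in\Lambda$, $z\in\Gamma$ it passes to the nonempty difference region between $D$ and $D'$, on whose boundary the total field vanishes (partly by the sound-soft condition, partly by hypothesis), so each $u(\cdot;z)$ is a Dirichlet eigenfunction of $-\Delta$ with eigenvalue $k^2$ there; linear independence of the point-source total fields for distinct $z\in\Gamma$ (Theorem 2.1 of \cite{QC12a}) then contradicts the finite multiplicity of Dirichlet eigenvalues. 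You instead fix a single $x^*\in\Lambda\setminus\partial D$, use the symmetry $u^s(x^*;z)=u^s(z;x^*)$ to read the data as the total field of a source at $x^*$ vanishing on all of $\Gamma$, annihilate it in the exterior of $\Gamma$ by uniqueness for the exterior Dirichlet problem, continue by analyticity to all of $\mathbb{R}^2\setminus(\overline{D}\cup\{x^*\})$, and contradict the logarithmic singularity at $x^*$. The trade-offs are real: your route needs $\Gamma$ to be a closed curve (vanishing on an open arc alone would not feed the exterior-uniqueness step), whereas the paper's argument only needs infinitely many distinct sources on an arbitrary curve; conversely, your argument is pointwise in $x^*$ — it shows the indicator cannot vanish at any single point off $\partial D$ — so it never needs $\Lambda$ to bound a domain, and it avoids both the finite-multiplicity machinery and the paper's choice $D^*=D\setminus\overline{D_0}$, which lies inside the obstacle and implicitly leans on the analytic extension of the fields assumed later in Section 2.2. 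One minor quibble: $u$ is defined (and equal to zero) on $\partial D$ as well, so the correct reading of the hypothesis is $\Lambda\cap D=\emptyset$ rather than $\Lambda\subset\mathbb{R}^2\setminus\overline{D}$; this does not affect your argument since you only use points $x^*\in\Lambda\setminus\partial D$.
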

\begin{proof}
	Assume that $D'\neq  D$ and $D_0=D\cap D'$. Without loss of generality, we assume that $ D^*=D\backslash\overline{D_0}$ is nonempty. From \cref{obstacle_condition1}, it is readily to see that $u(x; z)=0$ for all $ x\in \Lambda, z\in \Gamma$. Then, we see that for every $z\in \Gamma$,
	\begin{align*}
		\Delta u(\cdot; z)+k^2 u(\cdot; z) & = 0 \quad \text{in}\  D^*,\\
		u(\cdot; z) & = 0 \quad \text{on} \ \partial D^*.
	\end{align*}
	This implies that $u(\cdot; z)$ is a Dirichlet eigenfunction for the negative Laplacian in the domain $D^*$ with eigenvalue $k^2$. Further, from Theorem 2.1 in \cite{QC12a}, we know that the functions $u(\cdot; z)$ are linearly independent for distinct $z\in \Gamma$, which leads to a contraction since there are only finitely many linearly independent Dirichlet eigenfunctions $u_n$ (see \cite[ Theorem 5.1]{Colton}). Therefore, $D'=D$ and $\Lambda=\partial D$.
\end{proof}

\begin{theorem}\label{Thm2.2}
	Let $D$ be a sound-hard obstacle. Assume that $\Lambda$ is the boundary of domain $D'$ and the total fields satisfy that
    \begin{equation}
	\int_{\Gamma}\int_{\Lambda}\left|\frac{\partial u(x; z)}{\partial \nu(x)}\right| \mathrm{d}s(x)\mathrm{d}s(z)=0.  \label{obstacle_condition2}
    \end{equation}
	Then we have  $D'=D$ and $\Lambda=\partial D$.
\end{theorem}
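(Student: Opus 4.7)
The plan is to mirror the argument of \cref{Thm2.1} almost verbatim, replacing the role of the Dirichlet boundary condition by the Neumann one. First, I would assume for contradiction that $D'\ne D$, set $D_0=D\cap D'$, and argue that without loss of generality $D^*=D\setminus\overline{D_0}$ is nonempty (the other case $D'\setminus\overline{D_0}\ne\emptyset$ is symmetric). From \cref{obstacle_condition2} and the continuity of $\partial u(x;z)/\partial\nu(x)$ in $x$ and $z$, I would deduce pointwise that
\[
    \frac{\partial u(x;z)}{\partial\nu(x)}=0\qquad\text{for all } x\in\Lambda,\ z\in\Gamma.
\]

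Next, for each fixed $z\in\Gamma$ I would examine $u(\cdot;z)$ on the subdomain $D^*$. The boundary $\partial D^*$ splits into a portion contained in $\partial D$ and a portion contained in $\Lambda=\partial D'$. On the former, the sound-hard boundary condition \cref{OBC} gives $\partial u/\partial\nu=0$; on the latter, the identity above gives $\partial u/\partial\nu=0$ (the sign of the normal is immaterial since the value is zero). Hence $u(\cdot;z)$ solves
\begin{align*}
    \Delta u(\cdot;z)+k^2 u(\cdot;z) &= 0 \quad\text{in } D^*,\\
    \frac{\partial u(\cdot;z)}{\partial\nu} &= 0 \quad\text{on }\partial D^*,
\end{align*}
so $u(\cdot;z)$ is a Neumann eigenfunction of $-\Delta$ in $D^*$ with eigenvalue $k^2$.

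Finally, I would invoke the linear independence of $\{u(\cdot;z)\}_{z\in\Gamma}$ for distinct source locations (the same result from \cite[Theorem 2.1]{QC12a} cited in the sound-soft proof; its argument depends only on the singularity of the incident point source, not on the boundary condition), together with the fact that each Neumann eigenspace of $-\Delta$ on a bounded Lipschitz domain is finite-dimensional (the Neumann analogue of \cite[Theorem 5.1]{Colton}). This produces infinitely many linearly independent elements in a finite-dimensional space, a contradiction. Therefore $D'=D$ and $\Lambda=\partial D$.

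The only step that is not a routine transcription of the sound-soft proof is the verification that the normal derivative really vanishes on the entire boundary $\partial D^*$, since that boundary is assembled from two pieces with a priori different outward normals; I expect this to be the main (minor) obstacle, but as noted it dissolves because both contributions yield the value zero. Citing a Neumann version of the finite-dimensionality of eigenspaces and of the linear-independence lemma will also need a brief remark, but both are standard for $C^2$ domains.
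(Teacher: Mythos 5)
Your proposal takes a genuinely different route from the paper, and the difference is not cosmetic: the paper deliberately avoids a Schiffer-type eigenfunction argument in the sound-hard case. Its proof observes that $\partial u/\partial\nu=0$ on $\Lambda$ forces $D$ and $D'$ to produce the same scattered fields for all point sources $z\in\Gamma$, transfers this to plane-wave far-field patterns through the mixed reciprocity relation \cite[Theorem 3.24]{Colton}, and then concludes $D=D'$ from the classical uniqueness theorem for sound-hard obstacles \cite[Theorem 5.6]{Colton}.

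The reason for this detour is precisely the step you dismiss as ``standard for $C^2$ domains.'' In the Dirichlet case (\cref{Thm2.1}) the contradiction rests on the finite multiplicity of Dirichlet eigenvalues, which is domain-independent: $H^1_0(D^*)\hookrightarrow L^2(D^*)$ is compact for every bounded open set, however irregular its boundary. The Neumann analogue is not domain-independent. Finite multiplicity of Neumann eigenvalues requires the compact embedding $H^1(D^*)\hookrightarrow L^2(D^*)$, which can fail for non-Lipschitz domains, and your $D^*$ is a set difference of $D$ and $D'$ whose boundary is pieced together from parts of $\partial D$ and $\Lambda$; these curves may meet tangentially, creating cusps, so $D^*$ is in general neither $C^2$ nor Lipschitz and ``the Neumann analogue of \cite[Theorem 5.1]{Colton}'' is simply not available on it. This is the well-known obstruction that prevents Schiffer's argument from extending to sound-hard scatterers, and it is a genuine gap in your proof, not a remark to be added. (A smaller defect, inherited from the sound-soft proof you are transcribing: the exterior total field $u(\cdot\,;z)$ is only defined in $\mathbb{R}^2\setminus\overline{D}$, so any eigenfunction argument must be run on the part of $D'$ lying outside $\overline{D}$, not on $D^*=D\setminus\overline{D_0}\subset D$.) To obtain a complete proof, follow the paper's route: the vanishing Neumann data on $\Lambda$ shows the scattered field for $D$ also solves the exterior Neumann problem for $D'$, mixed reciprocity converts coincidence of the point-source data into coincidence of the plane-wave far fields, and \cite[Theorem 5.6]{Colton} then gives $D=D'$ and $\Lambda=\partial D$.
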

\begin{proof}
 By \cref{obstacle_condition2}, we have $\frac{\partial u(x; z)}{\partial \nu(x)}=0$ for all $ x\in \Lambda, z\in \Gamma$, and that $D$ and $D'$ produce the same scattered fields, and then the far-field patterns coincide. Further, the mixed reciprocity relation \cite[Theorem 3.24]{Colton} implies that for all $z\in \Gamma$ and $\hat{x}\in \mathbb{S}^1=\{x\in \mathbb{R}^2: |x|=1\}$,
 \begin{align*}
 	v_D^s(z; -\hat{x})=v_{D'}^s(z; -\hat{x}),
 \end{align*}
 where $v_D^s(x;d)$ and $v_{D'}^s(x; d)$ are the scattered fields generated by the obstacle $D$ and $D'$ according to
 the incident plane wave $v^i(x; d)=\mathrm{e}^{\mathrm{i}k x\cdot d}$. This means the far field patterns coincide. And thus from  Theorem 5.6 in \cite{Colton}, we have $D=D'$ and $\Lambda=\partial D$.
\end{proof}

Now, we introduce an indicator function for the sound-soft obstacle
\begin{equation}\label{obstacleindicator1}
	I_{\rm s}(x)=\int_{\Gamma}|u(x; z)| \mathrm{d}s(z).
\end{equation}
From \cref{Thm2.1}, we only need to find a closed curve $\Lambda$ such that $I_{\rm s}=0$ on $\Lambda$.

For a sound-hard obstacle, we take a fixed $z_0\in\Gamma$, and let $\nu(x; z_0)$ be a unit vector such that $\nu(x; z_0) \cdot \nabla u(x; z_0)=0$. Then, we have $\nabla u(x; z)\cdot \nu(x; z_0)=0$ on $\partial D$ for every $z\in \Gamma$. Therefore, the indicator function for the sound-hard obstacle is as follows
\begin{equation}\label{obstacleindicator2}
	I_{\rm h}(x)  =  \int_{\Gamma}|\nabla u(x; z)\cdot \nu(x; z_0)| \mathrm{d}s(z). 
\end{equation}
 By \cref{Thm2.2}, one needs to find a closed curve $\Lambda$ such that $I_{\rm h}=0$ on $\Lambda$.

The approximation on $u(x; z)$ with the near-field data $u(x; z)|_{\partial B_\rho} $ will be given in the next subsection.

\subsection{The Fourier-Bessel approximation}

In this subsection we consider the approximation of the scattered fields $u^s(x; z)$ for every $z$ by the expansion of Fourier-Hankel functions
\begin{equation}
	u^s_N (x; z)= \sum_{n=-N}^N \hat{u}_n(z) \frac{H_n^{(1)}(kr)}{H_n^{(1)}(kR)} \mathrm{e}^{\mathrm{i} n\theta}, \quad r>R, \label{Fourier-Hankel}
\end{equation} 
with the Fourier coefficients
$$
\hat{u}_n(z)=\frac{1}{2\pi}\int_0^{2\pi} u^s(R,\theta; z) \mathrm{e}^{-\mathrm{i} n\theta}\, \mathrm{d}\theta,
$$
where the polar coordinates $(r, \theta): x=r(\cos\theta,\sin\theta)$ is used and $B_R=\{x\in \mathbb{R}^2: |x|<R \}\subset D$. Here similar to Chapter 5 in \cite{Colton} we make the assumption that the scattered fields can be extended analytically to $D\backslash B_R$.

By the orthogonality of the Fourier basis functions, the coefficients $\hat{u}_n(z)$ can be given by
\begin{equation}\label{coefficient}
	\hat{u}_n(z)=\frac{H_n^{(1)}(kR)}{2\pi H_n^{(1)}(k\rho)} \int_0^{2\pi} u^s(\rho,\theta; z)\mathrm{e}^{-\mathrm{i} n\theta}\, \mathrm{d}\theta,
\end{equation}
where $u^s(\cdot; z)|_{\partial B_\rho}=u(\cdot; z)|_{\partial B_\rho}-u^i(\cdot; z)|_{\partial B_\rho}$ is the  measured scattered data.

Taking the ill-posedness of underlying inverse problem into account, we have the following expansion of Fourier-Hankel functions
\begin{equation}\label{Noisy-Fourier-Hankel}
	u_N^{s,\delta} (x; z)=\sum_{n=-N}^N  \hat{u}_n^\delta(z) \frac{H_n^{(1)}(kr)}{ H_n^{(1)}(k\rho)}
	\mathrm{e}^{\mathrm{i} n\theta}, \quad R<r<\rho,
\end{equation}
where 
$$
	\hat{u}_n^\delta(z)=\frac{1}{2\pi} \int_0^{2\pi} u^{s,\delta}(\rho,\theta; z)\mathrm{e}^{-\mathrm{i} n\theta}\, \mathrm{d}\theta,
$$
and $u^{s,\delta}\in L^2(\partial B_\rho)$ are measured noisy data satisfying $\|u^{s,\delta}-u^{s}\|_{L^2(\partial B_\rho)}\leq \delta \|u^{s}\|_{L^2(\partial B_\rho)}$ with $0<\delta<1$. For simplicity,  we shall also use $\langle\cdot,\cdot\rangle_{\Lambda}$ for the usual inner product on $L^2(\Lambda)$ with $\Lambda$ being either $\partial B_\rho$ or $\partial B_R$ accordingly.

In what follows, $\Gamma(z)=\int_0^{\infty}\mathrm{e}^{-t}t^{z-1}\mathrm{d}t$ is the Gamma function, $J_n$ and $Y_n$ denote respectively the Bessel functions and Neumann functions of order $n$, and $[\,\cdot\,] $ signifies the rounding function which truncates the variable down to the nearest integer. To consider the error estimate, we need the following results.

\begin{lemma}\label{Lemma3.1}
	For $z>0$ and $n\in\mathbb{N}$ such that $n>(\mathrm{e}z+1)/2$, we have
    \begin{equation}\label{estimate1}
    	\frac{1}{2}\leq \frac{\pi z^n\left|H_n^{(1)}(z)\right|}{3\cdot 2^{n-1}\Gamma(n)}\leq \mathrm{e}^z.
    \end{equation}
\end{lemma}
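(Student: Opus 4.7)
The plan is to split $H_n^{(1)}(z)=J_n(z)+\mathrm{i}Y_n(z)$ and argue that in the regime $n>(\mathrm{e}z+1)/2$ the singular finite sum in the integer-order Neumann expansion of $Y_n$ controls $|H_n^{(1)}(z)|$ from both sides. I would start from the representation
\begin{equation*}
Y_n(z)=-\frac{1}{\pi}\sum_{k=0}^{n-1}\frac{(n-k-1)!}{k!}\Bigl(\frac{z}{2}\Bigr)^{2k-n}+\frac{2}{\pi}\Bigl(\ln\frac{z}{2}+\gamma\Bigr)J_n(z)-\frac{1}{\pi}\Bigl(\frac{z}{2}\Bigr)^n\sum_{k=0}^{\infty}\frac{\psi(k+1)+\psi(n+k+1)}{k!(n+k)!}\Bigl(-\frac{z^2}{4}\Bigr)^k
\end{equation*}
and write $S_n$ for the first (singular) sum. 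The lower bound in \cref{estimate1} will then follow from $|H_n^{(1)}|\ge |Y_n|$, and the upper bound from $|H_n^{(1)}|\le |J_n|+|Y_n|$ combined with a triangle inequality on the three pieces of $Y_n$.

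For $S_n$ itself, pulling out the $k=0$ term yields $S_n=\frac{(n-1)!\,2^n}{z^n}(1+T_n)$ with $T_n=\sum_{k=1}^{n-1}\frac{(z/2)^{2k}}{k!\prod_{j=1}^k(n-j)}$. The elementary inequality $\prod_{j=1}^k(n-j)\ge k!$, which is true term-by-term because $n-j\ge k+1-j$ whenever $k\le n-1$, dominates each summand by $(z/2)^{2k}/(k!)^2$, and hence
\begin{equation*}
0\le T_n\le \sum_{k=1}^{\infty}\frac{(z/2)^{2k}}{(k!)^2}=I_0(z)-1\le \mathrm{e}^z-1,
\end{equation*}
the final step comparing $I_0(z)$ termwise to $\cosh z\le \mathrm{e}^z$ via $\binom{2k}{k}\le 4^k$. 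This pins $S_n/\pi$ between $\frac{(n-1)!\,2^n}{\pi z^n}$ and $\frac{(n-1)!\,2^n\,\mathrm{e}^z}{\pi z^n}$, which already gives essentially the right envelope.

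The remaining task is to control the two correction pieces inside $Y_n$ and, on the upper side, the extra $|J_n|$ term. Here the Poisson integral estimate $|J_n(z)|\le (z/2)^n/n!$, valid for all $z\ge 0$, is the main workhorse: it turns each such correction into a constant multiple of $z^{2n}/(4^n\,n!\,(n-1)!)$ multiplied by a logarithmic or digamma prefactor, and Stirling's inequality $n!\ge \sqrt{2\pi n}(n/\mathrm{e})^n$ then bounds this quantity by $(\mathrm{e}z/(2n))^{2n}$ up to an $O(1)$ factor. The hypothesis $n>(\mathrm{e}z+1)/2$ forces $\mathrm{e}z/(2n)<1$, so all corrections can be absorbed into the slack between $S_n/\pi$ and the target constants $\tfrac12$ and $\mathrm{e}^z$.

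The main obstacle will be precisely this last step: tracking the logarithmic factor $\ln(z/2)+\gamma$ and the digamma values $\psi(n+k+1)$, and verifying that at the smallest admissible $n$---where $\mathrm{e}z/(2n)$ is closest to $1$ and both $T_n$ and the corrections are largest simultaneously---the net inequality still closes. The constant $\mathrm{e}$ in the hypothesis enters precisely through Stirling's asymptotic $n!\sim (n/\mathrm{e})^n\sqrt{2\pi n}$, which is what allows the factor $(\mathrm{e}z/(2n))^{2n}$ to be kept under $1$ and decay fast enough for the bookkeeping to work out.
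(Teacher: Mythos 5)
Your overall strategy (the integer-order series for $Y_n$, isolating the singular sum $S_n$) is genuinely different from the paper's, which never touches the series expansion: the paper works with the Poisson-type integral representations of $J_n$ and $Y_n$, bounds the finite cosine integral uniformly by the Wallis-type estimate $\pi/(2\sqrt{2n+1})$, sandwiches the dominant piece $\int_0^\infty \mathrm{e}^{-zt}(1+t^2)^{n-1/2}\,\mathrm{d}t$ between $\Gamma(2n)/z^{2n}$ and $\mathrm{e}^z\,\Gamma(2n)/z^{2n}$, and uses Stirling (this is where $n>(\mathrm{e}z+1)/2$ enters) to show $\Gamma(2n)/z^{2n}\ge 2\pi/\sqrt{2n+1}$, so the ``small'' piece is absorbed with a fixed numerical margin and no $z$-dependent prefactors ever appear. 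However, your plan has a genuine gap exactly at the step you yourself flag as the main obstacle, and with the tools you list it does not close. The hypothesis $n>(\mathrm{e}z+1)/2$ does \emph{not} make $(\mathrm{e}z/(2n))^{2n}$ small: at the smallest admissible $n$ one has $(\mathrm{e}z/(2n))^{2n}\le\bigl(\tfrac{\mathrm{e}z}{\mathrm{e}z+1}\bigr)^{\mathrm{e}z+1}$, which tends to the \emph{constant} $\mathrm{e}^{-1}$ as $z\to\infty$; there is no decay in $z$. Consequently, bounding the logarithmic correction by $\tfrac{2}{\pi}\bigl(\ln\tfrac z2+\gamma\bigr)\tfrac{(z/2)^n}{n!}$ and comparing with the retained $k=0$ term $\tfrac{2^n\Gamma(n)}{\pi z^n}$ gives a relative size of order $\bigl(\ln\tfrac z2+\gamma\bigr)\tfrac{1}{2\pi}\bigl(\tfrac{\mathrm{e}z}{2n}\bigr)^{2n}\approx\tfrac{\ln(z/2)+\gamma}{2\pi\mathrm{e}}$, which is unbounded in $z$. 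But for the lower bound your slack is a fixed fraction: you keep only $1+T_n\ge 1$, i.e.\ the coefficient $1$ against the target $3/4$, so the corrections must total at most one quarter of the main term. Already for $z$ of order ten to twenty the log term alone (and likewise the $\psi(n+1)\approx\ln n$ contribution from the digamma series, since $n\approx\mathrm{e}z/2$) exceeds this slack, so the absorption argument fails for large $z$. The bound $|J_n(z)|\le (z/2)^n/n!$ is simply too lossy in the regime $n\approx\mathrm{e}z/2$, where the true $J_n$ is exponentially small by Debye asymptotics.

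The route is probably repairable, but it needs an extra idea rather than bookkeeping: either a sharper bound on $J_n(z)$ for $n\ge(\mathrm{e}z+1)/2$ that beats the logarithms, or (more in the spirit of your own computation) keeping more of the singular sum on the lower side, e.g.\ $1+T_n\ge 1+\tfrac{(z/2)^2}{n-1}\gtrsim 1+\tfrac{z}{2\mathrm{e}}$, whose linear growth in $z$ can then swallow the $\ln z$-sized corrections; you would also need to re-balance the constants so the lower bound constant $1/2$ in \cref{estimate1} survives for small $z$. The upper bound is in better shape, since there the slack is of size $\mathrm{e}^z/2$ times the main term and easily dominates logarithmic factors. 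Your preliminary estimates ($\prod_{j=1}^k(n-j)\ge k!$, $T_n\le I_0(z)-1\le\mathrm{e}^z-1$, the sandwich on $S_n$) are correct as stated.
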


\begin{proof}
	By the integral representations 
	$$
	J_n(z)=\frac{2(\frac{z}{2})^n}{\sqrt{\pi}\Gamma(n+1/2)}\int_0^1(1-t^2)^{n-1/2}\cos(zt) \mathrm{d}t,\quad n\geq 0,
	$$
	and
	$$
	Y_n(z)=\frac{2(\frac{z}{2})^n}{\sqrt{\pi}\Gamma(n+1/2)}
	\left(\int_0^1(1-t^2)^{n-1/2}\cos(zt) \mathrm{d}t-\int_0^\infty\mathrm{e}^{-zt}(1+t^2)^{n-1/2} \mathrm{d}t\right),\quad n\geq 0,
	$$
    and the fact that for $z>0$ and $n\geq 1$,
    \begin{align}
    	\int_0^1(1-t^2)^{n-1/2}\cos(zt) \mathrm{d}t & \le \int_0^1(1-t^2)^{n-1/2}\, \mathrm{d}t\notag\\
    	& = \int_0^{\pi/2} \cos^{2n}t\,\mathrm{d}t \notag\\
    	& = \frac{\pi}{2} \frac{(2n-1)!!}{(2n)!!}\notag\\
    	& = \frac{\pi}{2} \left(\frac{(2n-1)!!}{(2n)!!}\frac{(2n-1)!!}{(2n)!!}\right)^{1/2}\notag\\
    	& \le \frac{\pi}{2} \left(\frac{(2n-1)!!}{(2n)!!}\frac{(2n)!!}{(2n+1)!!}\right)^{1/2} \notag\\
    	& = \frac{\pi}{2\sqrt{2n+1}},\label{eq:wallis}
    \end{align}
    it is readily to see that for $z>0$ and $n\geq 1$,
	\begin{equation}\label{eqn1}
		|J_n(z)|\leq  \frac{2(\frac{z}{2})^n}{\sqrt{\pi}\Gamma(n+1/2)}\frac{\pi}{2\sqrt{2n+1}}
	\end{equation}
	and
	\begin{equation}\label{eqn2}
		|Y_n(z)|\leq\frac{2(\frac{z}{2})^n}{\sqrt{\pi}\Gamma(n+1/2)}\left(\frac{\pi}{2\sqrt{2n+1}}+\int_0^\infty\mathrm{e}^{-zt}(1+t^2)^{n-1/2} \mathrm{d}t\right).
	\end{equation}
	Moreover, by the Legendre duplication formula $\sqrt{\pi}\Gamma(2z)=2^{2z-1}\Gamma(z)\Gamma(z+1/2)$, we deduce that
	\begin{align}
		\int_0^\infty\mathrm{e}^{-zt}(1+t^2)^{n-1/2} \mathrm{d}t & \ge \int_0^\infty\mathrm{e}^{-zt}t^{2n-1} \mathrm{d}t \notag\\
		& = \frac{1}{z^{2n}}\int_0^\infty\mathrm{e}^{-t}t^{2n-1} \mathrm{d}t \notag \\ 
		& =  \frac{\Gamma(2n)}{z^{2n}} \notag\\
		& =  \frac{2^{2n-1}\Gamma(n)\Gamma(n+1/2)}{\sqrt{\pi}z^{2n}}, \label{eqn3}
	\end{align}
	and
   \begin{align}
	\int_0^\infty\mathrm{e}^{-zt}(1+t^2)^{n-1/2} \mathrm{d}t &\le \int_0^\infty\mathrm{e}^{-zt}(1+t)^{2n-1}\mathrm{d}t \notag \\
	& =  \mathrm{e}^z\int_0^\infty\mathrm{e}^{-z(t+1)}(1+t)^{2n-1}\mathrm{d}(t+1)\notag \\ 
	& \le \mathrm{e}^z\int_0^\infty\mathrm{e}^{-zt}t^{2n-1}\mathrm{d}t \notag \\
	& = \mathrm{e}^z\frac{\Gamma(2n)}{z^{2n}}. \label{eqn4}
	\end{align}
    Then from Stirling's approximation, 
	$$
	   n!>\sqrt{2\pi n}\left(\frac{n}{\mathrm{e}}\right)^n,
	$$
	we know that for $n>N_z:=(\mathrm{e}z+1)/2$,
	\begin{align}
		\frac{\Gamma(2n)}{z^{2n}} & =\frac{(2n-1)!}{z^{2n}}\notag \\
		& \ge \frac{\sqrt{2\pi (2n-1)}}{z}\left(\frac{2n-1}{\mathrm{e}z}\right)^{2n-1}\notag \\
		& \ge \frac{\mathrm{e}\sqrt{2\pi (2n-1)}}{2n-1} \notag \\
		& \ge \mathrm{e}\sqrt{\frac{2\pi}{2n+1}}\notag \\
		& \ge \frac{2\pi}{\sqrt{2n+1}}, \label{eqn5}
	\end{align}
    Now, by \cref{eqn1}, \cref{eqn2}, \cref{eqn4} and \cref{eqn5}, we deduce that for $n>N_z$,
	\begin{align}
		\left|H_n^{(1)}(z)\right| & =\left|J_n(z)+\mathrm{i}Y_n(z)\right|\notag\\
		& \le |J_n(z)|+|Y_n(z)| \notag\\
		& \le \frac{2(\frac{z}{2})^n}{\sqrt{\pi}\Gamma(n+1/2)}\left(\frac{\pi}{\sqrt{2n+1}}+\int_0^\infty\mathrm{e}^{-zt}(1+t^2)^{n-1/2} \mathrm{d}t\right)\notag\\
		& \le \frac{z^n}{\sqrt{\pi}2^{n-1}\Gamma(n+1/2)}\left(\mathrm{e}^z+\frac{1}{2}\right)\frac{\Gamma(2n)}{z^{2n}}\notag\\
		& \le \frac{3\mathrm{e}^z}{2}\frac{z^n}{\sqrt{\pi}2^{n-1}\Gamma(n+1/2)}\frac{2^{2n-1}\Gamma(n)\Gamma(n+\frac{1}{2})}{\sqrt{\pi}z^{2n}}\notag\\
		& = \mathrm{e}^z\frac{3\cdot 2^{n-1}\Gamma(n)}{\pi z^n}.\label{eq:Hn_upper_estimate}
	\end{align}

	Similarly, from the integral representation of $Y_n(z)$, \cref{eq:wallis}, \cref{eqn3} and \cref{eqn5}, it can be seen that for $n>N_z$,
	\begin{align}
		\left| H_n^{(1)}(z)\right| & \ge  \left|Y_n(z)\right|\notag\\
		& \ge \frac{2(\frac{z}{2})^n}{\sqrt{\pi}\Gamma(n+1/2)}
		\left(\int_0^\infty\mathrm{e}^{-zt}(1+t^2)^{n-1/2}\mathrm{d}t-\frac{\pi}{2\sqrt{2n+1}}\right)\notag\\
		& \ge \frac{2(\frac{z}{2})^n}{\sqrt{\pi}\Gamma(n+1/2)} \left(\frac{\Gamma(2n)}{z^{2n}}-\frac{\pi}{2\sqrt{2n+1}}\right)\notag\\
		& \ge \frac{3}{4} \frac{z^n}{\sqrt{\pi}2^{n-1}\Gamma(n+1/2)}\frac{\Gamma(2n)}{z^{2n}}\notag\\
		& = \frac{1}{2} \frac{3\cdot 2^{n-1}\Gamma(n)}{\pi z^n}. \label{eq:Hn_lower_estimate}
	\end{align}

	Finally, combining \cref{eq:Hn_upper_estimate} and \cref{eq:Hn_lower_estimate}, we arrive at estimate \cref{estimate1} and this completes the proof.
\end{proof}

\begin{lemma}[{\cite[Lemma 3.3]{ZG15}}]\label{lem:hankel_estimate}
	Let $n\in \mathbb{Z}$, and $t, t_1, t_2 \in \mathbb{R}_+$ satisfy $t_1\leq t_2$. Then the following estimates hold
	\begin{equation}\label{Lemma3.2.1}
		|H^{(1)}_n(t_2)| \le |H^{(1)}_n(t_1)|.
	\end{equation}
\end{lemma}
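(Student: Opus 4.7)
The plan is to prove the stronger claim that $f_n(t) := |H_n^{(1)}(t)|^2 = J_n^2(t)+Y_n^2(t)$ is nonincreasing on $(0,\infty)$ for every $n\in\mathbb{Z}$; the lemma then follows by taking square roots. Since $J_{-n}=(-1)^nJ_n$ and $Y_{-n}=(-1)^nY_n$ for integer $n$, the squares appearing in $f_n$ give $f_{-n}=f_n$, so it suffices to treat $n\ge 0$.

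The tool I would use is the classical Nicholson integral representation (see, e.g., Watson's \emph{Treatise on the Theory of Bessel Functions}),
\begin{equation*}
J_n^2(t)+Y_n^2(t) \;=\; \frac{8}{\pi^2}\int_0^\infty \cosh(2ns)\,K_0\!\bigl(2t\sinh s\bigr)\,\mathrm{d}s,\qquad t>0,\ n\ge 0,
\end{equation*}
where $K_0$ denotes the modified Bessel function of the second kind of order zero. Once this identity is established, the desired monotonicity is essentially immediate: $K_0$ is strictly positive and strictly decreasing on $(0,\infty)$, and for each fixed $s>0$ the map $t\mapsto 2t\sinh s$ is increasing in $t$, so the integrand $\cosh(2ns)\,K_0(2t\sinh s)$ is a pointwise decreasing function of $t$. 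Integrating in $s$ yields $f_n(t_2)\le f_n(t_1)$ for any $t_1\le t_2$, and taking square roots gives the asserted inequality.

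Two technical points need care. First, absolute convergence of the integral must be checked: the singularity of $K_0$ at $0$ is only logarithmic (hence harmless near $s=0$), while the asymptotic $K_0(x)\sim\sqrt{\pi/(2x)}\,\mathrm{e}^{-x}$ produces super-exponential decay $\mathrm{e}^{-t\mathrm{e}^s}$ that easily absorbs the $\mathrm{e}^{2ns}$ growth of $\cosh(2ns)$ for any fixed $t>0$ and any fixed $n$. Second, and more substantially, Nicholson's identity itself requires derivation; the route I would take is to insert the Schl\"afli representation $K_0(x)=\int_0^\infty \mathrm{e}^{-x\cosh\tau}\,\mathrm{d}\tau$ into the right-hand side, interchange the two integrations by Tonelli (the integrand is nonnegative), perform the substitution $u=\sinh s$, and reduce the resulting double integral to a known integral formula for $J_n^2+Y_n^2$. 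This verification is the main obstacle, though it is by now a standard computation.

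A self-contained alternative would be to differentiate $f_n$ directly and exploit the Bessel recurrences $J_n'=\tfrac{n}{t}J_n-J_{n+1}$ and $Y_n'=\tfrac{n}{t}Y_n-Y_{n+1}$ together with the Wronskian $J_nY_n'-J_n'Y_n=2/(\pi t)$ to establish $f_n'(t)\le 0$ by induction on $n$; however, that inductive bookkeeping appears noticeably messier than simply appealing to the integral representation, which is why I would prefer the Nicholson route.
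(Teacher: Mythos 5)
Your proposal is correct: reducing to $n\ge 0$ by $J_{-n}=(-1)^nJ_n$, $Y_{-n}=(-1)^nY_n$, invoking Nicholson's representation $J_n^2(t)+Y_n^2(t)=\frac{8}{\pi^2}\int_0^\infty \cosh(2ns)\,K_0(2t\sinh s)\,\mathrm{d}s$, and using that $K_0$ is positive and decreasing gives the stated monotonicity, and your convergence checks are in order. The paper itself offers no proof of this lemma — it is imported verbatim from \cite[Lemma 3.3]{ZG15} — and your Nicholson-formula argument is precisely the standard (Watson-style) proof of this classical fact on which such a citation rests, so there is nothing genuinely different to compare.
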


From \cref{Lemma3.1,lem:hankel_estimate}, we obtain the following approximation result.
\begin{theorem}\label{thm:estimates_obstacle}
	Let $\gamma=\mathrm{dist}(\partial B_R, \partial D)$ and $N\geq N_0:=\frac{\mathrm{e} k\rho+1}{2}.$
	Then there exist positive constants $C_1, C_2, \cdots, C_6$, such that
	\begin{align}
		\left \|u^s-u_N^{s,\delta}\right\|_{L^2(\partial B_\rho)} & \le C_1 \tau_1^{-N}+C_2 N^{1/2}\delta,\label{estimate3} \\ 
		\left \|u^s-u_N^{s,\delta}\right\|_{L^2(\partial D)} & \le C_3\tau_2^{-N}+C_4\tau_3^{N}\delta , \label{estimate4} \\
		\left \|\partial_{\nu}u^s-\partial_{\nu}u_N^{s,\delta}\right\|_{L^2(\partial D)} & \le C_5N\tau_2^{-N}+C_6N\tau_3^{N}\delta,  \label{estimate41}
	\end{align}
	where $\tau_1=\frac{\rho}{R}, \tau_2={\frac{R+\gamma}{R}}$,$\tau_3=\frac{\rho}{R+\gamma}$. Moreover, for $0<\delta<\tau_1^{-N_0}$, take $N= \left[\frac{1}{\ln \tau_1}\ln \frac{1}{\delta}\right]$, then the following results hold
	\begin{align}
		\left \|u^s-u_N^{s,\delta}\right\|_{L^2(\partial B_\rho)} & \le C_1 \delta+\frac{C_2}{\sqrt{\ln \tau_1}}   \delta^{1/2}, \label{estimate5} \\ 
		\left \|u^s-u_N^{s,\delta}\right\|_{L^2(\partial D)} & \le (C_3+C_4)\delta^{\frac{\ln \tau_2}{\ln \tau_1}}, \label{estimate6} \\
		 \left \|\partial_{\nu}u^s-\partial_{\nu}u_N^{s,\delta}\right\|_{L^2(\partial D)} & \le \frac{C_5+C_6}{\ln \tau_1}\delta^{\frac{\ln \tau_2}{\ln \tau_1}}|\ln \delta|.  \label{estimate61}
	\end{align}
\end{theorem}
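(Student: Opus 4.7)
The plan is to decompose via $u^s - u_N^{s,\delta} = (u^s - u_N^s) + (u_N^s - u_N^{s,\delta})$ and estimate the truncation error and the propagated noise separately, using the Fourier--Hankel representation together with the Hankel function bounds of \cref{Lemma3.1,lem:hankel_estimate}. The core observation, a direct consequence of \cref{Lemma3.1}, is that for $n \ge N_0$ the ratio $|H_n^{(1)}(kr_1)/H_n^{(1)}(kr_2)|$ behaves, up to a bounded multiplicative factor, like $(r_2/r_1)^n$; this is what generates all three geometric rates $\tau_1^{-N}$, $\tau_2^{-N}$ and $\tau_3^N$ appearing in the statement.

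For the truncation error, I would exploit the full Fourier--Hankel expansion of $u^s$ valid in $\{|x|>R\}$, whose coefficients $\hat u_n(z)$ are bounded uniformly by $M := \sup_{z\in\Gamma}\|u^s(R,\cdot;z)\|_{L^\infty}$. On $\partial B_\rho$, orthogonality yields
\[
\|u^s - u_N^s\|_{L^2(\partial B_\rho)}^2 \le 2\pi\rho\, M^2 \sum_{|n|>N}\Bigl|\frac{H_n^{(1)}(k\rho)}{H_n^{(1)}(kR)}\Bigr|^2,
\]
and inserting the ratio bound from \cref{Lemma3.1} converts this into the tail of a geometric series in $\tau_1^{-n}$, giving the $C_1\tau_1^{-N}$ term. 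For $\partial D$, an intermediate application of \cref{lem:hankel_estimate} at each $x\in\partial D$ (to replace $|H_n^{(1)}(kr(x))|$ by $|H_n^{(1)}(k(R+\gamma))|$) yields the rate $\tau_2^{-N}$ in \eqref{estimate4}.

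For the noise term, the assumption $\|u^{s,\delta}-u^s\|_{L^2(\partial B_\rho)}\le\delta\|u^s\|_{L^2(\partial B_\rho)}$ together with Cauchy--Schwarz produces the pointwise coefficient bound $|\hat u_n^\delta - a_n|\le c\delta$ for every $n$, where $a_n$ is the exact Fourier coefficient of $u^s(\rho,\cdot;z)$. On $\partial B_\rho$ the Hankel ratio equals one, so Parseval combined with summing $(2N+1)$ such terms gives $C_2 N^{1/2}\delta$. On $\partial D$, I would apply the triangle inequality pointwise in $n$ together with $|H_n^{(1)}(kr)/H_n^{(1)}(k\rho)|\le 2\mathrm{e}^{k\rho}\tau_3^n$ from \cref{Lemma3.1}, so that $\sum_{|n|\le N}\delta\,\tau_3^n$ telescopes geometrically into $C_4\tau_3^N\delta$. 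For $\partial_\nu$, differentiating term by term produces the radial piece $k H_n^{(1)\prime}(kr)$ and the angular piece $in/r$; inserting the recurrence $H_n^{(1)\prime}(z)=H_{n-1}^{(1)}(z)-(n/z)H_n^{(1)}(z)$ and applying \cref{Lemma3.1} to both $H_n^{(1)}$ and $H_{n-1}^{(1)}$ contributes precisely one extra factor of $n$, hence the $N\tau_2^{-N}$ and $N\tau_3^N\delta$ factors in \eqref{estimate41}.

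Finally, substituting $N=[\ln(1/\delta)/\ln\tau_1]$ and exploiting the key identity $\tau_1=\tau_2\tau_3$ transforms $\tau_2^{-N}$ into $\delta^{\ln\tau_2/\ln\tau_1}$ and $\tau_3^N\delta$ into $\delta^{1-\ln\tau_3/\ln\tau_1}=\delta^{\ln\tau_2/\ln\tau_1}$; the bound $N^{1/2}\delta\le\delta^{1/2}/\sqrt{\ln\tau_1}$ follows from $\delta|\ln\delta|\le 1$ on $(0,1)$, and $N\le|\ln\delta|/\ln\tau_1$ is absorbed into the extra $|\ln\delta|$ factor in \eqref{estimate61}. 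The main technical obstacle will be the derivative estimate: one must control $H_n^{(1)\prime}(kr)$ sharply enough to extract exactly one power of $n$ (not $n^2$), while keeping the exponential prefactors $\mathrm{e}^{kr}$ consistent across the recurrence so that the geometric constants $\tau_2^{-N}$ and $\tau_3^N$ survive unchanged.
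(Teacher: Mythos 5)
Your proposal is correct and follows essentially the same route as the paper: the same split into truncation plus noise, the same use of the Hankel ratio bounds from \cref{Lemma3.1} and the monotonicity in \cref{lem:hankel_estimate} to produce the rates $\tau_1^{-N}$, $\tau_2^{-N}$, $\tau_3^N$, the same recurrence-based derivative bound yielding one extra factor of $N$, and the same choice $N=\left[\ln(1/\delta)/\ln\tau_1\right]$ with $\tau_1=\tau_2\tau_3$ for the convergence rates. The only cosmetic differences (a sup-norm bound on the coefficients instead of the Parseval tail, and the $H_{n-1}^{(1)}$ form of the recurrence instead of $H_{n+1}^{(1)}$) do not change the argument.
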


\begin{proof}
	From \cref{Fourier-Hankel} and \cref{estimate1}, it can be seen that
	\begin{align*}
		\left \|u^s-u^s_N\right\|^2_{L^2(\partial B_\rho)} & =  2\pi\rho  \sum_{|n|>N} \left|\frac{H_n^{(1)}(k\rho)}{H_n^{(1)}(kR)}\right|^2 |\hat{u}_n(z)|^2\\
		& \le  2\pi\rho  \sum_{|n|>N}4\mathrm{e}^{2k\rho} \left(\frac{R}{\rho}\right)^{2|n|}|\hat{u}_n(z)|^2\\
		& \leq 8\pi\rho \mathrm{e}^{2k\rho} \left(\frac{R}{\rho}\right)^{2N}\|u^s\|_{L^2(\partial B_R)}^2.
	\end{align*}
	Further, by \cref{Fourier-Hankel}-\cref{Noisy-Fourier-Hankel}, we  have
	\begin{align*}
		\left \|u^s_N-u_N^{s,\delta}\right\|_{L^2(\partial B_\rho)}^2
		& = \frac{1}{2\pi\rho}\sum_{n=-N}^N  \left |\left\langle u^s-u^{s,\delta}, \mathrm{e}^{\mathrm{i} n\theta}\right\rangle_{\partial B_\rho} \right|^2\\
		& \le (2N+1)\delta^2 \|u^{s}\|_{L^2(\partial B_\rho)}^2.
	\end{align*}
	And thus,
	\begin{align*}
		\left \|u^s-u_N^{s,\delta}\right\|_{L^2(\partial B_\rho)} & \le \left \|u^s-u^s_N\right\|_{L^2(\partial B_\rho)}+ \left \|u^s_N-u_N^{s,\delta}\right\|_{L^2(\partial B_\rho)}\\
		& \le C_1 \tau_1^{-N}+C_2 N^{1/2}\delta,
	\end{align*}
	where $\tau_1=\frac{\rho}{R}$, $C_1=2\sqrt{2\pi\rho}\mathrm{e}^{k\rho}\|u^s\|_{L^2(\partial B_R)}$ and
	$ C_2=\sqrt{3}\|u^{s}\|_{L^2(\partial B_\rho)}$. This leads to estimate \cref{estimate3}.
	
	From \cref{estimate1} and \cref{Lemma3.2.1}, it is obvious that for $x\in \partial D$,
	\begin{align*}
		\left |u^s(x; z)-u^s_N(x; z)\right| & \le  \sum_{|n|>N} |\hat{u}_n(z)| \left |\frac{H_n^{(1)}(kr)}{H_n^{(1)}(kR)} \right|\\
		& \le \sum_{|n|>N} |\hat{u}_n(z)| \left |\frac{H_n^{(1)}(k(R+\gamma))}{H_n^{(1)}(kR)} \right|\\
		& \le   \sum_{|n|>N}2\mathrm{e}^{k(R+\gamma)} \left(\frac{R}{R+\gamma}\right)^{|n|}|\hat{u}_n(z)|  \\
		& \le  2\mathrm{e}^{k(R+\gamma)}\tau_2^{-N}\left(\sum_{|n|\geq1}\tau_2^{-2 |n|}\right)^{1/2} \left(\sum_{|n|>N}|\hat{u}_n(z)|^2\right)^{1/2}\\
		& \le \widetilde{C}_3\tau_2^{-N} \|u^s\|_{L^2(\partial B_R)},
	\end{align*}
	where $\gamma=\mathrm{dist}(\partial B_R, \partial D)$, $\tau_2= \frac{R+\gamma}{R}$   and $\widetilde{C}_3=2\sqrt{2}\mathrm{e}^{k(R+\gamma)}(\tau^2_2-1)^{-1/2} $. This implies
	\begin{equation}\label{eqn7}
		\left \|u^s-u^s_N\right\|_{L^2(\partial D)}\leq  C_3\tau_2^{-N},
	\end{equation}
	where $C_3=\widetilde{C}_3\|u^s\|_{L^2(\partial B_R)}|\partial D|^{1/2}$ and $|\partial D|$ denotes the length of $\partial D$. By using estimates \cref{estimate1} and \cref{Lemma3.2.1}, we have that for $x\in \partial D$,
	\begin{align*}
		\left |u^s_N(x; z)-u_N^{s,\delta}(x; z)\right|
		& \le \frac{1}{2\pi\rho}\sum_{n=-N}^N  \left |\frac{H_n^{(1)}(kr)}{H_n^{(1)}(k\rho)} \right|
		\left |\left\langle u^s-u^{s,\delta}, \mathrm{e}^{\mathrm{i} n\theta}\right\rangle_{\partial B_\rho} \right|\\
		& \le \frac{\|u^{s}\|_{L^2(\partial B_\rho)}}{\sqrt{2\pi\rho}}\delta\sum_{n=-N}^N \left |\frac{H_n^{(1)}(k(R+\gamma))}{H_n^{(1)}(k\rho)} \right|\\
		& \le \frac{\|u^{s}\|_{L^2(\partial B_\rho)}}{\sqrt{2\pi\rho}}\delta\sum_{n=-N}^N 2\mathrm{e}^{k(R+\gamma)}\left(\frac{\rho}{R+\gamma}\right)^{|n|}\\
		& \le \frac{5\|u^{s}\|_{L^2(\partial B_\rho)}}{3\sqrt{\pi\rho}}\delta\sum_{n=-N}^N \left(\frac{2\rho}{R+\gamma}\right)^{|n|}\\
		& \le \widetilde{C}_4\delta \tau_3^{N+1}\|u^{s}\|_{L^2(\partial B_\rho)},
	\end{align*}
	where $\tau_3=\frac{\rho}{R+\gamma}$ and $\widetilde{C}_4=\frac{ 2\sqrt{2}\mathrm{e}^{k(R+\gamma)}}{\sqrt{\pi\rho}(\tau_3-1)}$.  This, together with \cref{eqn7}, yields
	\begin{align*}
		\left \|u^s-u_N^{s,\delta}\right\|_{L^2(\partial D)}&\le \left \|u^s-u^s_N\right\|_{L^2(\partial D)}
		+ \left \|u^s_N-u_N^{s,\delta}\right\|_{L^2(\partial D)}\\
		&\le C_3\tau_2^{-N}+C_4\delta \tau_3^N,
	\end{align*}
	where $C_4=\widetilde{C}_4\tau_3|\partial D|^{1/2}$, which implies \cref{estimate4}.
	
	From the formula ${H_n^{(1)}}'(t)=-H_{n+1}^{(1)}(t)+n H_n^{(1)}(t)/t$, we have we have
	$$
	    \left|\nabla H_n^{(1)}(kr)\right|\le k\left|{H_{n+1}^{(1)}}(kr)\right|+\frac{|n|}{r}\left|{H_{n}^{(1)}}(k r)\right|,
	$$
	which, in conjunction with $\left|\nabla  \mathrm{e}^{\mathrm{i}n \theta}\right|=\frac{|n|}{r}$, implies
	\begin{equation}\label{gradient}
		   \left|\nabla \left(H_n^{(1)}(kr)\mathrm{e}^{\mathrm{i}n \theta}\right)\right|\leq k\left|{H_{n+1}^{(1)}}(kr)\right|+\frac{2|n|}{r}\left|{H_{n}^{(1)}}(k r)\right|.
	\end{equation}
	Now, by estimates \cref{estimate1}, \cref{Lemma3.2.1} and \cref{gradient}, we derive that for $x\in \partial D$,
	\begin{align*}
		 \left |\frac{\partial}{\partial \nu}u^s(x; z)-\frac{\partial}{\partial \nu}u_N^s(x; z)\right| 
		 & \le  \sum_{|n|>N} |\hat{u}_n(z)| \left |\frac{\nabla \left(H_n^{(1)}(kr)\mathrm{e}^{\mathrm{i}n \theta}\right)}{H_n^{(1)}(kR)} \right| \\
		 & \le \sum_{|n|>N} |\hat{u}_n(z)| \left(k\left |\frac{H_{n+1}^{(1)}(kr)}{H_n^{(1)}(kR)} \right|+\frac{2|n|}{r}\left |\frac{H_n^{(1)}(kr)}{H_n^{(1)}(kR)} \right|\right)\\
	     & \le \frac{8\mathrm{e}^{k(R+\gamma)}}{R+\gamma}\sum_{|n|>N}|n|\left(\frac{R}{R+\gamma}\right)^{|n|} |\hat{u}_n(z)|  \\
		& \le \widetilde{C}_5 N\tau_2^{-N} \|u^s\|_{L^2(\partial B_R)},
	\end{align*}
	where 
	$$
	\widetilde{C}_5=\frac{8\sqrt{2}\mathrm{e}^{k(R+\gamma)}}{R+\gamma}\left(\sum_{|n|\geq1}n^2\tau_2^{-2 |n|}\right)^{1/2}.
	$$
	This implies
	\begin{equation}\label{eq:estimate_C5}
		\left \|\frac{\partial}{\partial \nu}u^s(x; z) -\frac{\partial}{\partial \nu}u_N^s(x; z) \right\|_{L^2(\partial D)}\leq  C_5 N\tau_2^{-N},
	\end{equation}
	where $C_5=\widetilde{C}_5\|u^s\|_{L^2(\partial B_R)}|\partial D|^{1/2}$.
	Again, estimates \cref{estimate1}, \cref{Lemma3.2.1} and \cref{gradient}, we see that for $x\in \partial D$,
	\begin{align*}
		\left |\frac{\partial}{\partial \nu}u_N^s(x; z) -\frac{\partial}{\partial \nu}u_N^{s,\delta}(x; z)\right| 
		& \le \frac{1}{2\pi\rho}\sum_{n=-N}^N  \left |\frac{\nabla \left(H_n^{(1)}(kr)\mathrm{e}^{\mathrm{i}n \theta}\right)}{H_n^{(1)}(k\rho)} \right|
		\left|\left\langle u^s-u^{s,\delta}, \mathrm{e}^{\mathrm{i} n\theta}\right\rangle_{\partial B_\rho} \right|\\
		& \le\frac{1}{\sqrt{2\pi\rho}}\sum_{n=-N}^N\left(k\left |\frac{H_{n+1}^{(1)}(kr)}{H_n^{(1)}(kR)} \right|+\frac{2|n|}{r}\left |\frac{H_n^{(1)}(kr)}{H_n^{(1)}(kR)} \right|\right)
		\delta \|u^{s}\|_{L^2(\partial B_\rho)}\\
		& \le \frac{8\mathrm{e}^{k(R+\gamma)}\|u^{s}\|_{L^2(\partial B_\rho)}}{\sqrt{2\pi\rho}(R+\gamma)}\delta\sum_{n=-N}^N |n|\left(\frac{\rho}{R+\gamma}\right)^{|n|}\\
		& \le \widetilde{C}_6\delta N\tau_3^{N+1}\|u^{s}\|_{L^2(\partial B_\rho)},
	\end{align*}
	where  $\widetilde{C}_6=\frac{8\sqrt{2}\mathrm{e}^{k(R+\gamma)}}{\sqrt{\pi\rho}(R+\gamma)(\tau_3-1)}$. This, together with \cref{eq:estimate_C5}, yields
	\begin{align*}
		\left \|\partial_{\nu}u^s-\partial_{\nu}u_N^{s,\delta}\right\|_{L^2(\partial D)} & \le
		\left \|\partial_{\nu}u^s-\partial_{\nu}u_N^s\right\|_{L^2(\partial D)}
		+ \left \|\partial_{\nu}u_N^s-\partial_{\nu}u_N^{s,\delta}\right\|_{L^2(\partial D)}\\
		& \le C_5N\tau_2^{-N}+C_6\delta N\tau_3^N,
	\end{align*}
	where $C_6=\widetilde{C}_6\tau_3\|u^{s}\|_{L^2(\partial B_\rho)}|\partial D|^{1/2}$, which implies \cref{estimate41}.
	
	For $0<\delta<\tau_1^{-N_0}$, from $N=\left[\frac{1}{\ln \tau_1}\ln \frac{1}{\delta}\right]$, we see $N\geq N_0$. Substituting $N=\left[\frac{1}{\ln \tau_1}\ln \frac{1}{\delta}\right]$ into \cref{estimate3}, \cref{estimate4}, \cref{estimate41}, and using $\ln t<t$ for $t>1$, we derive the estimates \cref{estimate5}, \cref{estimate6} and \cref{estimate61}. The proof is complete.
\end{proof}

With the approximation of the scattered fields $u^s(x; z)$ by $u_N^{s, \delta} (x; z)$ in \cref{Noisy-Fourier-Hankel}, we now introduce the approximating indicator functions.

(i) The sound-soft case. The approximating indicator function for \cref{obstacleindicator1} is defined by
\begin{equation}\label{obstacleindicator4}
	\mathcal{I}_{{\rm s}, N}(x)=\int_{\Gamma}\left|u_N^{s, \delta} (x; z)+u^i(x; z)\right| \mathrm{d}s(z)
\end{equation}

(ii) The sound-hard case. The approximating indicator function for \cref{obstacleindicator2} is defined by
\begin{equation}\label{obstacleindicator5}
	\mathcal{I}_{{\rm h}, N}(x)=  \int_{\Gamma}\left|\nabla\left(u_N^{s, \delta} (x; z) +u^i(x; z)\right)\cdot \nu^{\delta}(x; z_0)\right| \mathrm{d}s(z) , 
\end{equation}
where $\nu^{\delta}(x; z_0)$ is a unit vector satisfying $\nu^{\delta}(x; z_0)\cdot \nabla(u_N^{s, \delta} (x; z_0) +u^i(x; z_0))=0$ and
\begin{equation}\label{unitvector}
	\left|\nabla\left(u_N^{s,\delta}(x; z_0)+u^i(x; z_0) \right)\right|=\max_{z\in \Gamma}\left\{\left|\nabla\left(u_N^{s,\delta}(x; z) +u^i(x; z)\right)\right|\right\}.
\end{equation}

Based on Theorem \ref{thm:estimates_obstacle}, the following estimates for the indicator functions hold.
\begin{theorem}\label{thm:estimates_indicator_obstacle}
	Let $\tau_1,\tau_2$ and $N_0$ be defined in \cref{thm:estimates_obstacle}. Then for $0<\delta<\tau_1^{-N_0}$ and $N=\left[\frac{1}{\ln \tau_1}\ln \frac{1}{\delta}\right]$, there exist positive constants $C_{\rm s}$ and $C_{\rm h}$, such that
	\begin{align}
		\mathcal{I}_{{\rm s}, N}(x) & \le C_{\rm s} \delta^{\alpha},\label{Thm3.2Est1} \\ 
		\mathcal{I}_{{\rm h}, N}(x) & \le C_{\rm h} \delta^{\alpha}|\ln \delta|, \label{Thm3.2Est2}
	\end{align}
	where $\alpha=\ln \tau_2/\ln \tau_1$.
\end{theorem}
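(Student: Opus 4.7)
The plan is to exploit the boundary conditions \cref{BC} to rewrite the indicator functions on $\partial D$ as pure approximation errors, and then invoke the pointwise Fourier--Bessel bounds hidden inside the proof of \cref{thm:estimates_obstacle}. The claim is understood as a pointwise estimate on $\partial D$ (away from $\partial D$ the indicators are bounded below by \cref{Thm2.1,Thm2.2}), so only the behavior on $\partial D$ needs a quantitative analysis.

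\textbf{Sound-soft case.} Since $u(x;z)=u^s(x;z)+u^i(x;z)=0$ on $\partial D$, one has
\[
    \mathcal{I}_{{\rm s},N}(x)=\int_{\Gamma}\bigl|u_N^{s,\delta}(x;z)-u^s(x;z)\bigr|\,\mathrm{d}s(z),\quad x\in\partial D.
\]
The pointwise inequalities derived inside the proof of \cref{thm:estimates_obstacle} (before the integration over $\partial D$) give, for $x\in\partial D$,
\[
    |u^s(x;z)-u_N^{s,\delta}(x;z)|\leq \widetilde{C}_3\tau_2^{-N}\|u^s(\cdot;z)\|_{L^2(\partial B_R)}+\widetilde{C}_4\,\delta\,\tau_3^{N+1}\|u^s(\cdot;z)\|_{L^2(\partial B_\rho)}.
\]
The $L^2$ norms depend continuously on $z\in\Gamma$ and hence are uniformly bounded on the compact curve $\Gamma$. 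Integrating in $z$ and using the elementary identity $\tau_1/\tau_3=(R+\gamma)/R=\tau_2$, the substitution $N=\left[\frac{1}{\ln\tau_1}\ln\frac{1}{\delta}\right]$ turns both $\tau_2^{-N}$ and $\delta\tau_3^{N}$ into $\mathcal{O}(\delta^{\alpha})$, giving \cref{Thm3.2Est1}.

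\textbf{Sound-hard case.} Now $\nabla u(x;z)\cdot\nu(x)=0$ on $\partial D$ for every $z\in\Gamma$. Writing $\tilde{u}_N(x;z)=u_N^{s,\delta}(x;z)+u^i(x;z)$, I split
\begin{align*}
    \nabla\tilde{u}_N(x;z)\cdot\nu^{\delta}(x;z_0)
    &= \bigl[\nabla u_N^{s,\delta}(x;z)-\nabla u^s(x;z)\bigr]\cdot\nu^{\delta}(x;z_0)\\
    &\quad + \nabla u(x;z)\cdot\bigl[\nu^{\delta}(x;z_0)-\nu(x)\bigr].
\end{align*}
The first summand is bounded pointwise by the analogue of \cref{estimate41} without the $|\partial D|^{1/2}$ factor, i.e.\ $\widetilde{C}_5 N\tau_2^{-N}+\widetilde{C}_6\delta N\tau_3^{N+1}$ times $z$-uniform constants. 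For the second, the two-dimensional stability of the orthogonal-unit-vector construction yields
\[
    |\nu^{\delta}(x;z_0)-\nu(x)|\leq \frac{C\,|\nabla\tilde{u}_N(x;z_0)-\nabla u(x;z_0)|}{|\nabla u(x;z_0)|},
\]
which is useful provided $|\nabla u(x;z_0)|$ is bounded below. This lower bound is secured by the maximization criterion \cref{unitvector}: one selects $z_0\in\Gamma$ realizing the largest $|\nabla\tilde{u}_N(x;z)|$, and for $\delta$ small the corresponding true $|\nabla u(x;z_0)|$ inherits a strictly positive lower bound from the gradient convergence. Integrating in $z\in\Gamma$ and inserting $N=\left[\frac{1}{\ln\tau_1}\ln\frac{1}{\delta}\right]$ then produces \cref{Thm3.2Est2}, where the extra $|\ln\delta|$ comes precisely from the factor $N$ in \cref{estimate41}.

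The main obstacle is the sound-hard case, and specifically the geometric step: controlling $|\nu^{\delta}(x;z_0)-\nu(x)|$ uniformly on $\partial D$. This needs (i) a quantitative perturbation estimate for the unit perpendicular of a nonzero vector in $\mathbb{R}^2$, and (ii) a proof that the maximization in \cref{unitvector} prevents the denominator $|\nabla u(x;z_0)|$ from degenerating. Once these are in place, the rest is bookkeeping: integrate the pointwise Fourier--Bessel estimates already produced in the proof of \cref{thm:estimates_obstacle} against $\mathrm{d}s(z)$, and plug in the logarithmically optimal truncation index.
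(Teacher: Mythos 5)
Your sound-soft argument is exactly the paper's: on $\partial D$ the boundary condition turns $\mathcal{I}_{{\rm s},N}$ into the integral of $|u_N^{s,\delta}-u^s|$, which is controlled by the pointwise estimates inside the proof of \cref{thm:estimates_obstacle} together with $\tau_3=\tau_1/\tau_2$ and the choice of $N$; that part is fine.

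The sound-hard case, however, contains a genuine gap, and it is precisely the point you flag at the end without resolving. Your decomposition pairs $\nabla u(x;z)$ with $\nu^{\delta}(x;z_0)-\nu(x)$, and your perturbation bound for the unit perpendicular has the \emph{true} gradient magnitude $|\nabla u(x;z_0)|$ in the denominator, so you need $\inf_{x\in\partial D}|\nabla u(x;z_0)|>0$. Your justification --- that the maximization \cref{unitvector} plus gradient convergence gives this for small $\delta$ --- only yields $|\nabla u(x;z_0)|\ge \sup_{z\in\Gamma}|\nabla u(x;z)|-2\epsilon(\delta)$; it therefore presupposes a uniform positive lower bound on $\sup_{z\in\Gamma}|\nabla u(x;z)|$ over $x\in\partial D$, which is a nontrivial non-degeneracy property of the total field that neither you nor the paper proves (on $\partial D$ the gradient is purely tangential, and ruling out that the tangential derivative vanishes at some boundary point for every $z\in\Gamma$ requires a separate argument). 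Moreover, even granting it, the resulting estimate would hold only for $\delta$ below a threshold depending on that lower bound, whereas the theorem is stated for all $0<\delta<\tau_1^{-N_0}$. The paper's proof is organized to avoid this issue entirely: it compares $\nu^{\delta}(x;z_0)$ with $\nu(x;z_0)$ (not with $\nu(x)$), uses the elementary bound $|\nu^{\delta}-\nu|\le 2|\xi_\delta-\xi|/|\xi_\delta|$ whose denominator is the \emph{noisy} gradient $\xi_\delta(x;z_0)=\nabla\bigl(u_N^{s,\delta}(x;z_0)+u^i(x;z_0)\bigr)$, and pairs the difference of normals with $\nabla\bigl(u_N^{s,\delta}(x;z)+u^i(x;z)\bigr)$; then \cref{unitvector} gives $\bigl|\nabla\bigl(u_N^{s,\delta}(x;z)+u^i(x;z)\bigr)\bigr|\le|\xi_\delta(x;z_0)|$, so the denominator cancels exactly inside the integral and no lower bound on any gradient is ever needed. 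The remaining term $\nabla\bigl(u_N^{s,\delta}-u^s\bigr)\cdot\nu(x;z_0)$ is handled by the pointwise version of \cref{estimate41}, exactly as you do for your first summand. To repair your proof, replace your decomposition by the paper's (or otherwise keep the noisy gradient in the denominator so that the maximization criterion can absorb it); as written, the geometric step does not close.
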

\begin{proof}
	From the proof of \cref{estimate4} in \cref{thm:estimates_obstacle} and $\tau_3=\tau_1/\tau_2$, it can be seen that for $x\in \partial D$,
	$$
		\left |u^s(x; z)-u^{s,\delta}_N(x; z)\right|\leq  C_7\left(\tau_2^{-N} \|u^s\|_{L^2(\partial B_R)}+\delta \tau_3^N \|u^{s}\|_{L^2(\partial B_\rho)}\right),
	$$
	where $C_7>0$ is a constant. By taking $N= \left[\frac{1}{\ln \tau_1}\ln \frac{1}{\delta}\right]$, we know that for $x\in \partial D$,
	\begin{align*}
		\left |u^s(x; z)-u^{s,\delta}_N(x; z)\right| & \le C_7\left(\delta^{\frac{\ln \tau_2}{\ln \tau_1}} \|u^s\|_{L^2(\partial B_R)}+ \delta^{1-\frac{\ln \tau_3}{\ln \tau_1}}\|u^{s}\|_{L^2(\partial B_\rho)}\right)\\
		& = C_7\left( \|u^s\|_{L^2(\partial B_R)}+\|u^s\|_{L^2(\partial B_\rho)}\right)\delta^{\frac{\ln \tau_2}{\ln \tau_1}}.
	\end{align*}
	And thus, for $x\in \partial D$,
	\begin{align*}
		\mathcal{I}_{{\rm s}, N}(x) & = \int_{\Gamma}\left|u_N^{s,\delta} (x; z)-u^s(x; z) +u^s(x; z)+u^i(x; z)\right| \mathrm{d}s(z) \\
		& = \int_{\Gamma}\left|u_N^{s, \delta} (x; z)-u^s(x; z)\right| \mathrm{d}s(z)\\
		& \le C_{\rm s} \delta^{\alpha},
	\end{align*}
    where 
    $$
    \alpha=\frac{\ln \tau_2}{\ln \tau_1},  \quad C_{\rm s}= C_7 C_0,\quad C_0:=\int_{\Gamma}\left(\|u^s(\cdot; z) \|_{L^2(\partial B_R)}+\|u^s(\cdot; z)\|_{L^2(\partial B_\rho)}\right) \mathrm{d}s(z).
    $$ 
    This leads to the estimate \cref{Thm3.2Est1}.
    
    Similarly, following the proof of \cref{estimate41} in \cref{thm:estimates_obstacle}, we have that for $x\in \partial D$,
    $$
    	\left|\nabla\left(u^s(x; z)- u^{s, \delta}_N(x; z)\right)\right|\leq  C_8\left( \|u^s\|_{L^2(\partial B_R)}+ \|u^s\|_{L^2(\partial B_\rho)}\right)\delta^{\alpha}|\ln \delta|,
    $$  
    where $C_8>0$ is a constant. This means that for $x\in \partial D$,
    \begin{equation}\label{Thm3.2Est4}
    	\int_{\Gamma}\left|\nabla\left(u_N^{s, \delta} (x; z)-u^s(x; z)\right)\cdot \nu(x; z_0)\right| \mathrm{d}s(z)\leq  C_8 C_0\delta^{\alpha}|\ln \delta|.
    \end{equation}

    For $\eta=(\eta_1, \eta_2)^{\top}\in \mathbb{R}^2$, we introduce the notation $\eta^{\bot}=(-\eta_2,\eta_1)^{\top}\in \mathbb{R}^2$. For convenience, denote $\xi_\delta(x; z) :=\nabla\left(u_N^{s, \delta}(x; z)+u^i(x; z)\right)$ and $\xi(x; z):=\nabla\left(u^s(x; z)+u^i(x; z)\right)$. Since for $x\in \partial D$, $\nu^{\delta}(x; z_0)\cdot \nabla\left(u_N^{s,\delta}(x; z_0)+u^i(x; z_0)\right)=0$ and $\nu(x; z_0)\cdot \nabla\left(u^s (x; z_0)+u^i(x; z_0)\right)=0$, it is readily to see that
    $$
    	\nu^{\delta}(x; z_0)=\frac{\xi_\delta^{\bot}(x; z_0)}{|\xi_\delta^{\bot}(x; z_0)| }, \quad
    	\nu(x; z_0)=\frac{\xi^{\bot}(x; z_0)}{|\xi^{\bot}(x; z_0)|}
    $$
    and
    \begin{align*}
    	\left|\nu^{\delta}(x; z_0)-\nu(x; z_0)\right| & \le\frac{2\left|\xi_\delta^{\bot}(x; z_0)-\xi^{\bot}(x; z_0)\right|}
    	{\left|\xi_\delta^{\bot}(x; z_0)\right|}\\
    	& =\frac{2\left|\xi_\delta(x; z_0)-\xi(x; z_0)\right|}
    	{\left|\xi_\delta(x; z_0)\right|}\\
    	& =\frac{2\left|\nabla\left(u_N^{s, \delta} (x; z_0)-u^s(x; z_0)\right)\right|}
    	{\left| \nabla\left(u_N^{s,\delta}(x; z_0)+u^i(x; z_0)\right)\right|}.
    \end{align*}
    Further, from \cref{unitvector}, we see that for $x\in \partial D$,
    \begin{align*}
    	&\quad \int_{\Gamma}\left|\left(\nu^{\delta}(x; z_0)-\nu(x; z_0)\right)\cdot \nabla\left(u_N^{s,\delta}(x; z)+u^i(x; z)\right)\right|\mathrm{d}s(z) \\ 
    	& \le 2\int_{\Gamma}\frac{\left|\nabla\left(u_N^{s,\delta} (x; z_0)-u^s(x; z_0)\right)\right|} {\left|\nabla\left(u_N^{s, \delta}(x; z_0)+u^i(x; z_0)\right)\right|}\left|\nabla\left(u_N^{s, \delta}(x; z)+u^i(x; z)\right)\right|
    	\mathrm{d}s(z) \\
    	& \le 2 C_8 C_0\delta^{\alpha}|\ln \delta|,
    \end{align*}
    which, together with \cref{obstacleindicator5} and \cref{Thm3.2Est4}, yields that for $x\in \partial D$,
    \begin{align*} 
    	&\quad \int_{\Gamma}\left|\nabla\left(u_N^{s,\delta} (x; z)+u^i(x; z)\right)\cdot \nu^{\delta}(x; z_0)\right| \mathrm{d}s(z) \\
    	& \le \int_{\Gamma}\left|\nabla\left(u_N^{s,\delta} (x; z)+u^i(x; z)\right)\cdot \left(\nu^{\delta}(x; z_0)-\nu(x; z_0)\right)\right| \mathrm{d}s(z) \\
    	&\quad + \int_{\Gamma}\left|\nabla\left(u_N^{s,\delta} (x; z)-u^s(x; z)\right)\cdot \nu(x; z_0)\right| \mathrm{d}s(z)\\
    	& \le  3 C_8 C_0\delta^{\alpha}|\ln \delta|.
    \end{align*}
    This implies estimate \cref{Thm3.2Est2} holds.
\end{proof}

\section{Inverse cavity scattering problem}\label{sec:cavity}

We begin this section with the precise formulations of the model cavity scattering problem. The interior scattering problem for cavities can be formulated as: to find the scattered field $u^s\in H^1(D)$ which satisfies the following boundary value problem:

\begin{align}
	\Delta u^s+ k^2 u^s & = 0\quad \mathrm{in}\ D,\label{eq:Helmholtz} \\
	\mathscr{B}u & = 0 \quad \mathrm{on}\ \partial D, \label{eq:boundary_condition}
\end{align}
where $u=u^i+u^s$ denotes the total field. Provided that $k^2$ is not a Dirichlet eigenvalue for $-\Delta$ in $D$, it is well known that the direct scattering problem \cref{eq:Helmholtz}--\cref{eq:boundary_condition} admits a unique solution $u^s\in H^1(D)$  (see, e.g. \cite{Cakoni1, Colton}). From now on we assume that $k^2$ is not a Dirichlet eigenvalue for $-\Delta$ in $D$.

Following \cite{ZWGL20}, we introduce the following definition of admissible curve.
\begin{definition} (Admissible curve) An open curve $\Gamma$ is called an admissible curve with respect to domain $\Omega$ if

(i) $\Omega \subset  D$ is simply-connected; 

(ii) $ \partial \Omega$ is analytic homeomorphic to $\mathbb{S}^1$; 

(iii) $k^2$ is not a Dirichlet eigenvalue of $-\Delta$ in $\Omega$; 

(iv) $\Gamma \subset \partial \Omega$ is a one-dimensional (1D) analytic manifold with nonvanishing measure.
\end{definition} 

\begin{remark}
We would like to remark that this requirement for the admissibility of $\Gamma$ can be easily fulfilled. For example, since the first zero of $J_0$ is approximately $2.4048$, $\Omega$ can be chosen as a disk whose radius is less than $2.4048/k$ and $\Gamma$ is chosen as an arbitrary corresponding semicircle.
\end{remark}

Now, we introduce the interior inverse scattering problem for incident point sources. 
\begin{problem}[Inverse cavity scattering]\label{prob:cavity}
	Let $D$ be the impenetrable cavity with boundary condition $\mathscr{B}$. Assume that $\Gamma$ and $\Gamma_R$ are admissible  curves  with respect to $\Omega$ and $B_R$, respectively, such that  $\overline{\Omega}\subset B_R$ and $B_R\subset D$. Given the total field data
	\begin{equation*}
		\{u(x; z):  x\in \partial B_R,\ z\in \Gamma\},
	\end{equation*}
	for a fixed wavenumber $k$, determine the boundary $\partial D$ of the cavity.
\end{problem}

We refer to \cref{fig:model_cavity} for an illustration of the geometry setting of \cref{prob:cavity}. 

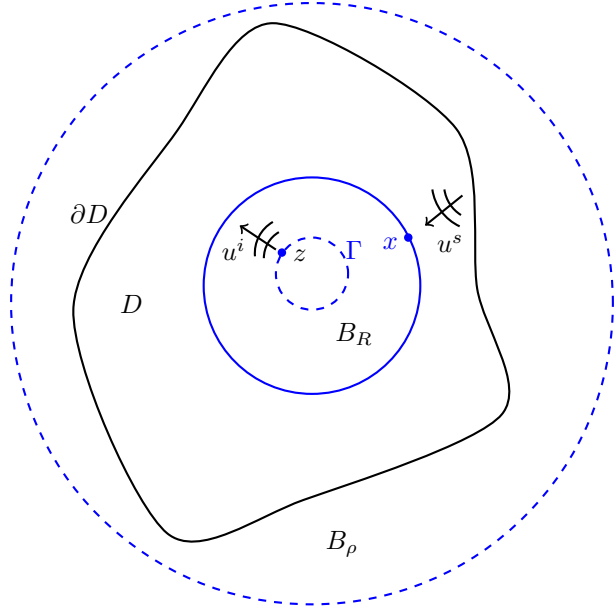
\begin{figure}
	\centering
	\begin{tikzpicture}[thick, scale=0.8]
		\pgfmathsetseed{2022}
		\draw plot [smooth cycle, samples=8, domain={1:8},  xshift=1.3cm, yshift=3.2cm] (\x*360/8+5*rnd:0.5cm+5.5cm*rnd) node at (-1.5, 3) {$D$};
		
		\draw node at (-2.2, 4.5) {$\partial D$};
		
		\draw [blue] (1.5, 3.3) circle (1.8cm);
		\draw node at (2.2, 2.5) {$B_R$};
		\draw [dashed, blue] (1.5, 3) circle (5cm); 
		\draw node at (2, -1) {$B_\rho$}; 
		
		\draw [blue, dashed] (1.5, 3.5) circle (0.6cm) node at (2.2, 3.9) {$\Gamma$};
		\draw node at (1.3, 3.8) {$z$};
		\fill [blue] (1, 3.85) circle (2pt);  
		\draw [->] (0.9, 3.9)--(0.3, 4.3) node at (0.2, 3.9) {$u^i$};;
		\draw (0.95,4.19) arc(120:180:0.5cm);
		\draw (0.85,4.36) arc(120:185:0.6cm);
		
		\draw [->](4, 4.8)-- (3.38,4.3);
		\fill [blue] (3.1, 4.1) circle (2pt) node at (2.8, 4) {$x$};  

		\draw (3.7, 4.9) arc(188:238:0.6cm);
		\draw (3.5, 4.9) arc(188:245:0.8cm);
		
		\draw node at (3.8, 4) {$u^s$}; 
	\end{tikzpicture}
	\caption{An illustration of the interior inverse scattering problem.} \label{fig:model_cavity}
\end{figure}

\subsection{Uniqueness and the indicator functions}

From \cite[Theorem 2.1]{QC12a} and \cite[Theorem 3.1]{QC12b}, we know that  the total field data $\{u(x; z):  x\in \partial B_R,\ z\in \Gamma\} $ can uniquely determine the boundary $\partial D$ of the cavity. Analogous to the inverse obstacle scattering problem, we have the following uniqueness results  and the indicator functions.

\begin{theorem}\label{Thm4.1}
	Let $D$ be a sound-soft cavity. Assume that $\Lambda$ is the boundary of domain $D'$. Suppose that the total fields satisfy that
	\begin{equation}
		\int_{\Gamma}\int_{\Lambda}|u(x; z)| \mathrm{d}s(x)\mathrm{d}s(z)=0  \label{cavity_condition1}.
	\end{equation}
	Then we have  $D'=D$ and $\Lambda=\partial D$.
\end{theorem}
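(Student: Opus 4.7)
The plan is to adapt the argument of \cref{Thm2.1} to the interior geometry of the cavity.

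First I would promote \cref{cavity_condition1} to a pointwise identity. The modulus $|u(x;z)|$ is nonnegative and, because $\Gamma$ and $\Lambda$ can be taken disjoint (the admissibility setup places $\Gamma$ on $\partial\Omega$ with $\overline{\Omega}\subset D$, while $\Lambda\subset\overline{D}$ lies away from $\Gamma$), jointly continuous in $(x,z)\in\Lambda\times\Gamma$. Therefore the vanishing of the double integral forces $u(x;z)=0$ for every $x\in\Lambda$ and every $z\in\Gamma$.

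Next I would argue by contradiction. Suppose $D'\neq D$ and set $D_0=D\cap D'$. Since $u(\cdot;z)$ is only defined inside the cavity $D$, the finiteness of \cref{cavity_condition1} in particular forces $\Lambda\subset\overline{D}$; consequently $D^*:=D\setminus\overline{D_0}$ may be assumed nonempty, and its boundary satisfies $\partial D^*\subset\partial D\cup\Lambda$. On $\partial D$ the sound-soft boundary condition \cref{eq:boundary_condition} yields $u(\cdot;z)=0$, and on $\Lambda$ Step 1 yields the same. Exploiting the admissibility of $\Gamma$ — in particular the freedom to shrink $\Omega$ so that $\overline{\Omega}\subset D_0$ — the source curve $\Gamma$ stays outside $D^*$, so $u(\cdot;z)$ is smooth on $D^*$ and satisfies $\Delta u(\cdot;z)+k^2 u(\cdot;z)=0$ there for every $z\in\Gamma$. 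Hence each $u(\cdot;z)$ is a Dirichlet eigenfunction of $-\Delta$ on $D^*$ associated with the eigenvalue $k^2$.

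Finally, as in the proof of \cref{Thm2.1}, Theorem 2.1 of \cite{QC12a} asserts that the family $\{u(\cdot;z):z\in\Gamma\}$ is linearly independent as $z$ varies, while the Dirichlet eigenspace of $-\Delta$ on $D^*$ attached to $k^2$ is finite-dimensional (\cite[Theorem 5.1]{Colton}). This contradiction forces $D'=D$, whence $\Lambda=\partial D$.

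The main obstacle is the interior-specific requirement that the reduced region $D^*$ lie inside the cavity and at the same time avoid the source curve $\Gamma$. Unlike the exterior case, where the reduction can be performed in the unbounded complement of the obstacle, here one must rely critically on the admissibility hypothesis (specifically, that $\Omega$ can be made small enough to sit in $D\cap D'$) to ensure that no singularity of $u(\cdot;z)$ lies in $D^*$; otherwise one would face an inhomogeneous Helmholtz equation carrying a point source inside $D^*$, which would invalidate the Dirichlet eigenfunction characterization.
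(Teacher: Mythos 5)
Your skeleton is the same as the paper's (the paper proves \cref{Thm4.1} simply by pointing to the Schiffer-type argument of \cref{Thm2.1}): deduce $u(x;z)=0$ for all $x\in\Lambda$, $z\in\Gamma$ from \cref{cavity_condition1}, pass to a difference domain on whose boundary $u(\cdot;z)$ vanishes, and contradict the finite multiplicity of the Dirichlet eigenvalue $k^2$ via the linear independence of $\{u(\cdot;z)\}_{z\in\Gamma}$. However, the step you yourself flag as critical is not justified. You claim that admissibility gives ``the freedom to shrink $\Omega$ so that $\overline{\Omega}\subset D_0$,'' hence that $\Gamma$ avoids $D^*=D\setminus\overline{D_0}$. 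This is not available: $\Gamma$ is the fixed curve on which the point sources are physically placed and for which the data are given, and $\Omega$ is an auxiliary domain tied to that fixed $\Gamma$ through the admissibility definition ($\Gamma\subset\partial\Omega$), so neither can be repositioned, and nothing in the hypotheses forces $\Gamma\subset\overline{D_0}$. Concretely, if $D'$ were a small domain abutting $\partial D$ and away from $B_R$, then $D_0=D\cap D'$ is that small set and all of $\Gamma$ lies in $D^*$; for such $z$ the field $u(\cdot;z)$ is singular at $z\in D^*$, hence is not a Dirichlet eigenfunction of $D^*$, and your contradiction does not materialize as written.

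The gap can be closed, but by an argument rather than an appeal to admissibility: the domain used for the eigenfunction argument must be chosen, case by case, so that its closure avoids $\Gamma$ while its boundary stays inside $\partial D\cup\Lambda$. In the configuration above one should work on $D'$ (or a suitable component of the difference of $D$ and $D'$) instead of $D^*$: there $u(\cdot;z)$ solves the Helmholtz equation and vanishes on $\Lambda=\partial D'$, so either $k^2$ is a Dirichlet eigenvalue of $D'$ --- and the linear-independence/finite-multiplicity contradiction applies --- or $u(\cdot;z)\equiv 0$ in $D'$, whence unique (analytic) continuation in $D\setminus\{z\}$ contradicts the logarithmic singularity at $z$. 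Only with such a case distinction (or with an explicit extra hypothesis that $\Gamma\cap\overline{D^*}=\emptyset$, which holds in the imaging context but is not stated in the theorem) does the proof of \cref{Thm2.1} transfer to the cavity setting. The paper's one-line proof glosses over the same point, but since you made the ``shrink $\Omega$'' step load-bearing, it is a genuine gap in your argument as written.
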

\begin{proof}
 See \cref{Thm2.1} for a similar proof.
\end{proof}

\begin{theorem}\label{Thm4.2}
	Let $D$ be a sound-hard cavity. Assume that $\Lambda$ is the boundary of domain $D'$. Suppose that the total fields satisfy that
	\begin{equation}
		\int_{\Gamma}\int_{\Lambda}\left|\frac{\partial u(x; z)}{\partial \nu(x)}\right| \mathrm{d}s(x)\mathrm{d}s(z)=0  \label{cavity_condition2}.
	\end{equation}
	Then we have  $D'=D$ and $\Lambda=\partial D$.
\end{theorem}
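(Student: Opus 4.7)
The plan is to mirror the argument of Theorem 2.1/4.1, replacing Dirichlet eigenfunctions with Neumann eigenfunctions. First I would observe that \cref{cavity_condition2} forces the pointwise vanishing $\partial u(x;z)/\partial\nu(x)=0$ for every $x\in\Lambda$ and every $z\in\Gamma$, by continuity of the integrand together with positivity. Assume for contradiction that $D'\neq D$ and set $D_0=D\cap D'$; without loss of generality (otherwise swap the roles of $D$ and $D'$) we may take $D^{*}=D\setminus\overline{D_0}$ to be nonempty. The admissibility assumptions force $\Omega\subset B_R\subset D_0$, so every source location $z\in\Gamma\subset\partial\Omega$ lies in $\overline{D_0}$ and, in particular, $z\notin D^{*}$. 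Consequently $u(\cdot;z)=u^i(\cdot;z)+u^s(\cdot;z)$ is regular in $D^{*}$ and satisfies $\Delta u(\cdot;z)+k^2 u(\cdot;z)=0$ there.

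Next I would check the boundary conditions on $\partial D^{*}$. Its boundary decomposes into a portion lying on $\partial D$ (on which $\partial u/\partial\nu=0$ by the sound-hard condition \cref{eq:boundary_condition}) and a portion lying on $\Lambda$ (on which $\partial u/\partial\nu=0$ by the previous step). Hence for every $z\in\Gamma$, $u(\cdot;z)$ is a Neumann eigenfunction of $-\Delta$ in $D^{*}$ with eigenvalue $k^2$.

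Finally I would invoke the linear independence of the family $\{u(\cdot;z)\}_{z\in\Gamma}$ for distinct $z$'s, following Theorem~2.1 in \cite{QC12a} exactly as in the proof of \cref{Thm2.1}. Since $\Gamma$ contains infinitely many points and the Neumann eigenspace of $-\Delta$ in the bounded domain $D^{*}$ associated to the eigenvalue $k^2$ is finite dimensional (the Neumann counterpart of \cite[Theorem~5.1]{Colton}), we obtain a contradiction, forcing $D^{*}=\emptyset$, and hence $D'=D$ and $\Lambda=\partial D$.

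The main obstacle I anticipate is the verification that the eigenfunction framework is legitimate on $D^{*}$: the set $D^{*}$ need not have $C^2$ boundary since it arises as a set-theoretic difference of two $C^2$ domains, so some care is required to argue that the relevant Neumann problem has discrete spectrum with finite-dimensional eigenspaces. This can be handled by working componentwise on connected components of $D^{*}$ (each inheriting a Lipschitz boundary with piecewise $C^2$ regularity) and appealing to the standard spectral theory for the Neumann Laplacian on bounded Lipschitz domains; no technical difficulty beyond that is expected, as the linear independence input from \cite{QC12a} is purely a property of the family of point-source total fields and does not depend on the boundary condition of the surrounding domain.
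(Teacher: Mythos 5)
Your plan diverges from the paper precisely at the point where the divergence matters, and the fix you sketch for the anticipated obstacle does not work. The paper does \emph{not} run a Schiffer-type eigenfunction argument for the sound-hard case: its proof of \cref{Thm4.2} deduces from \cref{cavity_condition2} (together with the admissibility of the curves) that $D$ and $D'$ produce the same scattered fields, then invokes the point-source reciprocity relation of \cite[Theorem 2.1]{Qin} to pass to $u_D^s(x;z)=u_{D'}^s(x;z)$ for all $x,z\in D_0$, and finally concludes $D=D'$ by the argument of \cite[Theorem 5.6]{Colton} (the point-source/blow-up uniqueness proof), exactly because the Dirichlet eigenfunction-counting route of \cref{Thm2.1} is known not to carry over to the Neumann condition.

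The concrete gap in your proposal is the claim that each connected component of $D^{*}=D\setminus\overline{D_0}$ ``inherits a Lipschitz boundary with piecewise $C^2$ regularity.'' This is false in general: when $\Lambda$ touches $\partial D$ tangentially, $\partial D^{*}$ develops cusps, and a cusp domain is not Lipschitz. For such domains the embedding $H^1(D^{*})\hookrightarrow L^2(D^{*})$ need not be compact, the Neumann Laplacian need not have discrete spectrum, and the ``finitely many linearly independent Neumann eigenfunctions'' statement you want as the Neumann counterpart of \cite[Theorem 5.1]{Colton} is simply unavailable. (For the Dirichlet case this issue does not arise, since $H_0^1$ of any bounded open set embeds compactly in $L^2$, which is why \cref{Thm2.1} and \cref{Thm4.1} can use Schiffer's argument.) So the final contradiction in your plan rests on a spectral fact that can fail, and the proof as proposed is incomplete; to repair it you would either have to restrict the geometry so that $D^{*}$ is an extension domain, or switch to the reciprocity-plus-\cite[Theorem 5.6]{Colton} route that the paper actually follows.
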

\begin{proof}
	By \cref{cavity_condition2} and the definition of admissible curve $B_R$, we have $\frac{\partial u(x; z)}{\partial \nu(x)}=0$ for all $ x\in \Lambda, z\in \Gamma$, and that $D$ and $D'$ produce the same scattered fields. Further, from the reciprocity relation \cite[Theorem 2.1]{Qin}, we see that for all $z\in \Gamma$ and $x\in D_0= D\cap D'$,
	\begin{align*}
		u_D^s(z;x)=u_{D'}^s(z; x).
	\end{align*}
	Further, the definition of admissible  curve $\Gamma$  and the  reciprocity relation lead to
	\begin{align*}
		u_D^s(x;z)=u_{D'}^s(x; z),\ \forall x,z\in  D_0.
	\end{align*}
	Then, by a similar discussion in Theorem 5.6 in \cite{Colton}, we have $D=D'$ and $\Lambda=\partial D$.
\end{proof}

Now, the indicator functions for a sound-soft or a sound-hard cavity are as follows
\begin{equation}\label{cavityindicator1}
	I_{\rm s}(x)=\int_{\Gamma}|u(x; z)| \mathrm{d}s(z),
\end{equation}
\begin{equation}\label{cavityindicator2}
	I_{\rm h}(x) =\int_{\Gamma}|\nabla u(x; z)\cdot \nu(x; z_0)| \mathrm{d}s(z), 
\end{equation}
where $z_0\in \Gamma$ and $\nu(x; z_0)\cdot \nabla u(x; z_0)=0$. In the next subsection, we will approximate $u(x; z)$ by the Fourier-Bessel functions with the measurements on $\partial B_R$.

\subsection{The Fourier-Bessel approximation}
We assume the scattered fields can be extended analytically to $B_\rho\supseteq D$ and approximate the scattered fields $u^s(x; z, k)$ for every $z\in \Gamma$ by the expansion of Fourier-Bessel functions
\begin{equation}
	u^s_N (x; z)=\sum_{n=-N}^N \hat{u}_n(z) \frac{J_n(kr)}{J_n(k\rho)} \mathrm{e}^{\mathrm{i} n\theta}, \quad r<\rho, \label{Fourier-Bessel}
\end{equation}
with the Fourier coefficients
$$
\hat{u}_n(z)=\frac{1}{2\pi} \int_0^{2\pi} u^s(\rho, \theta; z)\mathrm{e}^{-\mathrm{i} n\theta} \mathrm{d}\theta, 
$$
where $J_n(t)$ is the Bessel function of the first kind of order $n$, under the polar coordinates $(r,\theta): x=r(\cos\theta,\sin\theta)$. The parameters $\hat{u}_n(z)$ can be determined by the measurements $u^s(x; z, k)=u(x; z)-u^i(x; z)$ on $\partial B_R$ by
\begin{equation}\label{FBcoefficient}
	\hat{u}_n(z)=\frac{J_n(k\rho)}{2\pi J_n(kR)} \int_0^{2\pi} u^s(R, \theta; z)\mathrm{e}^{-\mathrm{i} n\theta} \mathrm{d}\theta.
\end{equation}
Due to measured noisy data $u^{s,\delta}\in L^2(\partial B_R)$ satisfying $\|u^{s,\delta}-u^{s}\|_{L^2(\partial B_R)}\leq \delta \|u^{s}\|_{L^2(\partial B_R)}$ with $0<\delta<1$,  the expansion of Fourier-Bessel functions is as follows
\begin{equation}\label{Noisy-Fourier-Bessel}
	u_N^{s,\delta} (x; z)=\sum_{n=-N}^N  \hat{u}_n^\delta(z)\frac{J_n(kr)}{ J_n(k R)}
	 \mathrm{e}^{\mathrm{i} n\theta}, \quad R<r<\rho,
\end{equation}
where
$$
	\hat{u}_n^\delta(z)=\frac{1}{2\pi} \int_0^{2\pi} u^{s,\delta}(R, \theta; z)\mathrm{e}^{-\mathrm{i} n\theta} \mathrm{d}\theta.
$$

Now, we give the following approximation results.
\begin{lemma}\label{Lemma5.1}
	For $z>0$ and $n\in\mathbb{N}$ such that $n>[\max\{ \frac{3}{10} z^2-1,1\}]$, we have
	\begin{equation}\label{estimate7}
		\frac{1}{6} \leq\frac{2^n n!\,| J_n(z)|}{z^n} \leq 1.
	\end{equation}
\end{lemma}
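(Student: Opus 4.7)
The strategy is to work directly with the power series
$$J_n(z)=\sum_{m=0}^{\infty}\frac{(-1)^m}{m!\,(n+m)!}\left(\frac{z}{2}\right)^{n+2m},$$
which after rearrangement gives
$$\frac{2^n n!\,J_n(z)}{z^n}=\sum_{m=0}^{\infty}(-1)^m a_m,\qquad a_m:=\frac{n!}{m!\,(n+m)!}\left(\frac{z}{2}\right)^{2m},$$
with $a_0=1$. The plan is to prove the two-sided bound \cref{estimate7} by recognizing the right-hand side as an alternating series with monotonically decreasing positive terms and invoking the Leibniz estimate.

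The first step is to verify that $(a_m)$ is strictly decreasing and tends to $0$. A direct computation yields
$$\frac{a_{m+1}}{a_m}=\frac{(z/2)^2}{(m+1)(n+m+1)}.$$
The hypothesis $n>[\max\{\tfrac{3}{10}z^2-1,\,1\}]$ forces $n\geq 2$ and $n+1>\tfrac{3}{10}z^2$, so $(z/2)^2/(n+1)<5/6$. For $m=0$ this gives $a_1/a_0<5/6<1$, while for $m\geq 1$ the denominator satisfies $(m+1)(n+m+1)\geq 2(n+1)$, so $a_{m+1}/a_m<5/12<1$. Consequently the sequence is strictly decreasing and the ratio stays bounded away from $1$, so $a_m\to 0$.

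With this monotonicity in hand, the Leibniz alternating-series estimate applied to the partial sums of $\sum(-1)^m a_m$ immediately sandwiches the sum between consecutive partial sums, giving
$$1-a_1\;\leq\;\sum_{m=0}^{\infty}(-1)^m a_m\;\leq\;1.$$
Since $a_1=z^2/[4(n+1)]<5/6$ by the hypothesis, the left-hand side exceeds $1/6$. In particular $J_n(z)>0$ so the absolute value is harmless, and the inequalities translate directly into \cref{estimate7}.

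The only delicate point is matching constants: the target lower bound $1/6$ is exactly what one obtains from $1-a_1\geq 1-5/6$, and the threshold $\tfrac{3}{10}z^2$ is calibrated so that $(z/2)^2/(n+1)<5/6$, i.e. $a_1<5/6$. The $\max\{\cdot,1\}$ and the floor $[\cdot]$ are just bookkeeping, ensuring $n\geq 2$ and preserving the strict inequality when passing from real thresholds to integer $n$; they do not require any further estimates.
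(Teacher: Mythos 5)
Your proof is correct and takes essentially the same route as the paper: both expand $J_n$ in its power series, factor out $z^n/(2^n n!)$, and use the alternating, term-decreasing structure guaranteed by $n+1>\tfrac{3}{10}z^2$ (so that $a_1\le 5/6$) to trap the sum between $1-a_1\ge 1/6$ and $1$. If anything, your write-up is slightly more careful, since you verify the ratio $a_{m+1}/a_m<1$ explicitly before invoking the Leibniz estimate, a step the paper leaves implicit.
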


\begin{proof}
	From the definition of the Bessel functions of the first kind
	\begin{align*}
		J_n(z) & = \sum_{p=0}^{\infty}\frac{(-1)^p}{p!(n+p)!}\left(\frac{z}{2}\right)^{n+2p}\\
		& = \frac{z^n}{2^n n!}\left(1-\frac{\left(\frac{z}{2}\right)^2}{n+1}
		+\sum_{p=2}^{\infty}(-1)^p\frac{\left(\frac{z}{2}\right)^{2p}}{p!(n+1)\cdots(n+p)}\right),
	\end{align*}
	we see that for $n\ge \frac{3}{10} z^2-1$,
	$$
		J_n(z) \geq \frac{z^n}{2^n n!}\left(1-\frac{\left(\frac{z}{2}\right)^2}{n+1} \right)
		\geq \frac{1}{6}\frac{z^n}{2^n n!}
	$$
	and
	$$
		J_n(z) \leq\frac{z^n}{2^n n!}.
	$$
	The proof is complete.
\end{proof}

\begin{theorem}\label{thm:estimates_cavity}
	Let $\mu=\mathrm{dist}(\partial B_\rho, \partial D)$ and $N\geq N_0:=[\max\{ \frac{3}{10}(k\rho)^2-1,1\}]$. Then there exist positive constants $K_1, \cdots, K_6$, such that 
	\begin{align}
		\left \|u^s-u_N^{s,\delta}\right\|_{L^2(\partial B_R)} & \le K_1 \sigma_1^{-N}+K_2 N^{1/2}\delta,\label{estimate8} \\ 
		\left \|u^s-u_N^{s,\delta}\right\|_{L^2(\partial D)} & \le K_3\sigma_2^{-N}+K_4\sigma_3^{N}\delta , \label{estimate9} \\
		 \left \|\partial_{\nu} u^s-\partial_{\nu} u_N^{s, \delta}\right\|_{L^2(\partial D)} & \le K_5 N\sigma_2^{-N}+K_6 N\sigma_3^N\delta,  \label{estimate91}
	\end{align}
	where $\sigma_1=\frac{\rho}{R}, \sigma_2=\frac{\rho}{\rho-\mu}, \sigma_3=\frac{\rho-\mu}{R}$. Moreover, for $0<\delta<\sigma_1^{-N_0}$, take $N= \left[\frac{1}{\ln \sigma_1}\ln \frac{1}{\delta}\right]$, then the following results hold
	\begin{align}
		\left \|u^s-u_N^{s, \delta}\right\|_{L^2(\partial B_R)} & \le K_1 \delta+\frac{K_2}{\sqrt{\ln \sigma_1}}   \delta^{1/2},\label{estimate10} \\ 
		\left \|u^s-u_N^{s, \delta}\right\|_{L^2(\partial D)} & \le (K_3+K_4) \delta^{\frac{\ln \sigma_2}{\ln \sigma_1}}, \label{estimate11} \\
		\left \|\partial_{\nu}u^s-\partial_{\nu}u_N^{s, \delta}\right\|_{L^2(\partial D)} & \le
		\frac{K_5+K_6}{\ln \sigma_1} \delta^{\frac{\ln \sigma_2}{\ln \sigma_1}}|\ln \delta|. \label{estimate12}
	\end{align}
\end{theorem}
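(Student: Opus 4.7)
The plan is to mirror the proof of \cref{thm:estimates_obstacle} line by line, with the Bessel-function bounds of \cref{Lemma5.1} replacing the Hankel-function bounds used there. The essential geometric reversal is that, in the cavity setting, the analytically-extended scattered field now lives on the whole disk $B_\rho\supseteq D$, so the Fourier–Bessel coefficients $\hat u_n(z)$ are those of $u^s|_{\partial B_\rho}$; meanwhile the measurements live on the inner circle $\partial B_R\subset D$, and the noisy approximation $u_N^{s,\delta}$ uses the denominator $J_n(kR)$, whereas the noise-free $u_N^s$ uses $J_n(k\rho)$. The proof proceeds by splitting $u^s-u_N^{s,\delta}=(u^s-u_N^s)+(u_N^s-u_N^{s,\delta})$ on each of the relevant curves.

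For \cref{estimate8}, the truncation part has Fourier series $\sum_{|n|>N}\hat u_n(z)\,J_n(kR)/J_n(k\rho)\,e^{\mathrm{i}n\theta}$ on $\partial B_R$; the two-sided bound $\frac{(kr)^n}{6\cdot 2^n n!}\le|J_n(kr)|\le\frac{(kr)^n}{2^n n!}$ from \cref{Lemma5.1} gives $|J_n(kR)/J_n(k\rho)|\le 6\sigma_1^{-|n|}$ for $|n|\ge N_0$, and Parseval yields the $\sigma_1^{-N}\|u^s\|_{L^2(\partial B_\rho)}$ contribution. For the noise part, the identity $\hat u_n\,J_n(kR)/J_n(k\rho)=\frac{1}{2\pi}\int_0^{2\pi}u^s(R,\theta;z)\,e^{-\mathrm{i}n\theta}\,\mathrm{d}\theta$ shows that $\hat u_n J_n(kR)/J_n(k\rho)-\hat u_n^\delta$ is exactly the $n$-th Fourier coefficient of $(u^s-u^{s,\delta})|_{\partial B_R}$, and an $L^2$ Cauchy–Schwarz argument over $|n|\le N$ produces the $N^{1/2}\delta$ factor exactly as in the obstacle case.

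For \cref{estimate9} the same decomposition is evaluated at $x\in\partial D$; since $B_R\subset D\subset B_\rho$ and $\mu=\mathrm{dist}(\partial B_\rho,\partial D)$, every such $x$ satisfies $|x|\le\rho-\mu$, so \cref{Lemma5.1} gives the truncation ratio $|J_n(kr)/J_n(k\rho)|\le 6\sigma_2^{-|n|}$ and the noise ratio $|J_n(kr)/J_n(kR)|\le 6\sigma_3^{|n|}$. The first produces the $\sigma_2^{-N}$ term; the second, summed as a finite geometric series $\sum_{|n|\le N}\sigma_3^{|n|}$ with $\sigma_3\ge1$, produces $\sigma_3^N\delta$. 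For \cref{estimate91} the new ingredient is the identity $J_n'(t)=-J_{n+1}(t)+nJ_n(t)/t$, which, combined with the angular derivative, gives the analogue of \cref{gradient}, namely $|\nabla(J_n(kr)e^{\mathrm{i}n\theta})|\le k|J_{n+1}(kr)|+(2|n|/r)|J_n(kr)|$. The Bessel bounds then give the same geometric factors with an extra $|n|$, and Cauchy–Schwarz on the $|n|$-weighted tails contributes the additional factor $N$ in front of both $\sigma_2^{-N}$ and $\sigma_3^N\delta$.

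Finally, for \cref{estimate10,estimate11,estimate12} I would substitute $N=[(\ln\sigma_1)^{-1}\ln(1/\delta)]$ and invoke the algebraic identity $\sigma_1=\sigma_2\sigma_3$, whence $\ln\sigma_3/\ln\sigma_1=1-\ln\sigma_2/\ln\sigma_1$; the two competing terms $\sigma_2^{-N}$ and $\delta\sigma_3^N$ then collapse to the common power $\delta^{\ln\sigma_2/\ln\sigma_1}$, with the derivative version picking up the extra $|\ln\delta|/\ln\sigma_1$ from the factor $N$. The main obstacle is bookkeeping rather than conceptual: since $J_n$ oscillates, no monotonicity lemma analogous to \cref{lem:hankel_estimate} is available, so every pointwise ratio $|J_n(kr_1)/J_n(kr_2)|$ must be handled via the two-sided bound of \cref{Lemma5.1}, which in turn requires the restriction $|n|\ge N_0$ to be maintained in every geometric-series estimate.
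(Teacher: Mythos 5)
Your proposal is correct and follows essentially the same route as the paper's proof: the same truncation-plus-noise splitting, the two-sided Bessel bounds of \cref{Lemma5.1} producing the geometric factors $\sigma_1,\sigma_2,\sigma_3$, Parseval/Cauchy--Schwarz for the noise contributions (including the observation that $\hat u_n J_n(kR)/J_n(k\rho)-\hat u_n^\delta$ is the Fourier coefficient of $(u^s-u^{s,\delta})|_{\partial B_R}$), a Bessel recurrence for the gradient estimate, and the substitution $N=\left[\frac{1}{\ln\sigma_1}\ln\frac{1}{\delta}\right]$ together with $\sigma_1=\sigma_2\sigma_3$ to collapse both terms to $\delta^{\ln\sigma_2/\ln\sigma_1}$. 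The only cosmetic difference is your use of the recurrence $J_n'(t)=-J_{n+1}(t)+nJ_n(t)/t$ where the paper uses $J_n'(t)=J_{n-1}(t)-nJ_n(t)/t$ to obtain the analogue \cref{gradient1}; both choices give the same final bounds.
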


\begin{proof}
	From \cref{Fourier-Bessel} and \cref{estimate7}, it can be seen that
	\begin{align*}
		\left \|u^s-u^s_N\right\|^2_{L^2(\partial B_R)}&= 2\pi R  \sum_{|n|>N} \left|\frac{J_n(kR)}{J_n(k\rho)}\right|^2|\hat{u}_n(z)|^2\\
		&\le 2\pi R \sum_{|n|>N}36 \left(\frac{R}{\rho}\right)^{2|n|}|\hat{u}_n(z)|^2\\
		&\le 72\pi R\left(\frac{R}{\rho}\right)^{2N}\|u^s\|_{L^2(\partial B_\rho)}^2.
	\end{align*}
	Further, by \cref{Fourier-Bessel}-\cref{Noisy-Fourier-Bessel}, we  have
	\begin{align*}
		\left \|u^s_N-u_N^{s,\delta}\right\|_{L^2(\partial B_R)}^2
		&= \frac{1}{2\pi R}\sum_{n=-N}^N  \left |\left\langle u^s-u^{s,\delta}, \mathrm{e}^{\mathrm{i} n\theta}\right\rangle_{\partial B_R} \right|^2 \\
		& \le (2N+1)\delta^2 \|u^{s}\|_{L^2(\partial B_R)}^2.
	\end{align*}
	And thus,
	\begin{align*}
		\left \|u^s-u_N^{s,\delta}\right\|_{L^2(\partial B_R)} & \le \left \|u^s-u^s_N\right\|_{L^2(\partial B_R)}+ \left \|u^s_N-u_N^{s,\delta}\right\|_{L^2(\partial B_R)}\\
		& \le K_1 \sigma_1^{-N}+K_2 N^{1/2}\delta,
	\end{align*}
	where $\sigma_1=\frac{\rho}{R}, K_1=6\sqrt{2\pi R}\|u^s\|_{L^2(\partial B_\rho)}$ and
	$K_2=\sqrt{3}\|u^s\|_{L^2(\partial B_R)}$. This leads to estimate \cref{estimate8}.
	
	From \cref{estimate7}, it is readily to see that for $x\in \partial D$,
	\begin{align*}
		\left |u^s(x; z)-u^s_N(x; z)\right| & \le  \sum_{|n|>N} \left |\frac{J_n(kr)}{J_n(k\rho)} \right| |\hat{u}_n(z)|\\
		& \le 6 \sum_{|n|>N} \left(\frac{r}{\rho}\right)^{|n|} |\hat{u}_n(z)| \\
		& \le 6\sum_{|n|>N} \left(\frac{\rho-\mu}{\rho}\right)^{|n|} |\hat{u}_n(z)| \\
		& \le 6\sigma_2^{-N}\left(\sum_{|n|\geq1}\sigma_2^{-2|n|}\right)^{1/2} \left(\sum_{|n|>N}|\hat{u}_n(z)|^2\right)^{1/2}\\
		& \le  \widetilde{K}_3\sigma_2^{-N} \|u^s\|_{L^2(\partial B_\rho)},
	\end{align*}
	where $\mu=\mathrm{dist}(\partial B_\rho, \partial D), \sigma_2=\frac{\rho}{\rho-\mu}$  and $\widetilde{K}_3=6\sqrt{2}/\sqrt{\sigma_2^2-1} $. This implies
	\begin{equation}\label{eqn8}
		\left \|u^s-u^s_N\right\|_{L^2(\partial D)}\leq K_3\sigma_2^{-N},
	\end{equation}
	where $K_3=\widetilde{K}_3\|u^s\|_{L^2(\partial B_\rho)}|\partial D|^{1/2}$. Similarly, by using \cref{Noisy-Fourier-Bessel} and \cref{estimate7}, we have that for $x\in \partial D$,
	\begin{align*}
		\left |u^s_N(x; z)-u_N^{s,\delta}(x; z)\right|
		\leq & \frac{1}{2\pi R}\sum_{n=-N}^N  \left | \frac{J_n(kr)}{J_n(k R)} \right|
		\left |\left\langle u^s-u^{s,\delta}, \mathrm{e}^{\mathrm{i} n\theta}\right\rangle_{\partial B_R} \right|\\
		\leq & \frac{1}{\sqrt{2\pi R}}\sum_{n=-N}^N \left | \frac{J_n(kr)}{ J_n(k R)}  \right|
		\delta \|u^{s}\|_{L^2(\partial B_R)}\\
		\leq & \frac{\|u^s\|_{L^2(\partial B_R)}}{\sqrt{2\pi R}}\delta\sum_{n=-N}^N \frac{4}{3} \left(\frac{\rho-\mu}{R}\right)^{|n|}\\
		\leq & \widetilde{K}_4\delta \sigma_3^{N+1},
	\end{align*}
	where $\sigma_3=\frac{\rho-\mu}{R}$ and $\widetilde{K}_4=\frac{6\sqrt{2}\|u^{s}\|_{L^2(\partial B_R)}}{3\sqrt{\pi R}(\sigma_3-1)}$. This, together with \cref{eqn8}, yields
	\begin{align*}
		\left \|u^s-u_N^{s,\delta}\right\|_{L^2(\partial D)}\leq & \left \|u^s-u^s_N\right\|_{L^2(\partial D)}
		+ \left \|u^s_N-u_N^{s,\delta}\right\|_{L^2(\partial D)}\\
		\leq & K_3\sigma_2^{-N}+K_4\delta \sigma_3^N,
	\end{align*}
	where $K_4=\widetilde{K}_4\sigma_3|\partial D|^{1/2}$, which implies \cref{estimate9}.
	
	From the formula $J_n'(t)=J_{n-1}(t)-n J_n(t)/t$, it can be seen that
	$$
		\left|\nabla J_n(kr)\right|\leq k \left|J_{n}'(kr)\right|\leq k\left|J_{n-1}(kr)\right|+\frac{|n|}{r}\left| J_n(kr)\right|,
	$$
	which, together with $\left|\nabla  \mathrm{e}^{\mathrm{i}n \theta}\right|=|n|/r$, implies
	\begin{equation}\label{gradient1}
		\left|\nabla \left(J_n(kr)\mathrm{e}^{\mathrm{i}n \theta}\right)\right|\leq k\left| J_{n-1}(kr)\right|+\frac{2|n|}{r}\left| J_n(kr)\right|.
	\end{equation}
	Now, by using estimate \cref{estimate7}, we deduce that for $x\in \partial D$,
	\begin{align*}
		 \left |\frac{\partial}{\partial \nu}u^s(x; z) -\frac{\partial}{\partial \nu}u_N^s(x; z)\right|
		 & \le  \sum_{|n|>N} |\hat{u}_n(z)| \left |\frac{\nabla \left(J_n(kr)\mathrm{e}^{\mathrm{i}n \theta}\right)}{J_n(k\rho)} \right| \\
		& \le\sum_{|n|>N} |\hat{u}_n(z)| \left(k\left |\frac{J_{n-1}(kr)}{J_n(k\rho)} \right|+\frac{2|n|}{r}\left |\frac{J_n(kr)}{J_n(k\rho)} \right|\right) \\
		& \le\frac{24}{R}\sum_{|n|>N}|n|\left(\frac{\rho-\mu}{\rho}\right)^{|n|} |\hat{u}_n(z)| \\
		& \le \frac{24}{R}N\sigma_2^{-N}\left(\sum_{|n|\geq1}n^2\sigma_2^{-2|n|}\right)^{1/2} \left(\sum_{|n|>N}|\hat{u}_n(z)|^2\right)^{1/2}\\
		& \le  \widetilde{K}_5 N\sigma_2^{-N} \|u^s\|_{L^2(\partial B_\rho)},
	\end{align*}
	where $\widetilde{K}_5=\frac{24}{R}\left(\sum_{|n|\geq1}n^2\sigma_2^{-2|n|}\right)^{1/2} $. This implies
	\begin{equation}\label{eq:estimate_K5}
		\left \|\frac{\partial}{\partial \nu} u^s(x; z)-\frac{\partial}{\partial \nu}u_N^s(x; z)\right\|_{L^2(\partial D)}\le  K_5 N\sigma_2^{-N},
	\end{equation}
	where $K_5=\widetilde{K}_5\|u^s\|_{L^2(\partial B_\rho)}|\partial D|^{1/2}$. Further, from  estimates \cref{estimate7} and \cref{gradient1}, we see that for $x\in \partial D$,
	\begin{align*}
		\left |\frac{\partial}{\partial \nu}u_N^s(x; z)-\frac{\partial}{\partial \nu}u_N^{s, \delta}(x; z)\right|
		& \le \frac{1}{2\pi R}\sum_{n=-N}^N  \left |\frac{\nabla \left(J_n(kr)\mathrm{e}^{\mathrm{i}n \theta}\right)}{J_n(kR)} \right|
		\left |\left\langle u^s-u^{s,\delta}, \mathrm{e}^{\mathrm{i} n\theta}\right\rangle_{\partial B_R} \right| \\
		& \le \frac{1}{\sqrt{2\pi R}}\sum_{n=-N}^N \left(k\left|\frac{J_{n-1}(kr)}{J_n(kR)} \right|+\frac{2|n|}{r}\left|\frac{J_n(kr)}{J_n(kR)} \right|\right)
		\delta \|u^{s}\|_{L^2(\partial B_R)}\\
		& \le \frac{\|u^{s}\|_{L^2(\partial B_R)}}{\sqrt{2\pi R}}\delta\sum_{n=-N}^N \frac{24|n|}{R} \left(\frac{\rho-\mu}{R}\right)^{|n|}\\
		& \le \frac{12\sqrt{2}\|u^s\|_{L^2(\partial B_R)}}{R\sqrt{\pi R}}\delta N\sum_{n=-N}^N  \left(\frac{\rho-\mu}{R}\right)^{|n|}\\
		& \le \widetilde{K}_6\delta N\sigma_3^{N+1},
	\end{align*}
	where $\widetilde{K}_6=\frac{24\sqrt{2}\|u^s\|_{L^2(\partial B_R)}}{R\sqrt{\pi R}(\sigma_3-1)}$. This, together with \cref{eq:estimate_K5}, yields
	 \begin{align*}
		\left\|\partial_{\nu}u^s-\partial_{\nu}u_N^{s, \delta}\right\|_{L^2(\partial D)}& \le
		\left \|\partial_{\nu}u^s-\partial_{\nu}u_N^s\right\|_{L^2(\partial D)}
		+ \left \|\partial_{\nu}u_N^s-\partial_{\nu}u_N^{s, \delta}\right\|_{L^2(\partial D)}\\
		& \le K_5 N\sigma_2^{-N}+K_6\delta N\sigma_3^{N},
	\end{align*}
	where $K_6=\widetilde{K}_6\sigma_3|\partial D|^{1/2}$, which implies \cref{estimate91}.
	
	For $0<\delta<\sigma_1^{-N_0}$, from $N=\left[\frac{1}{\ln \sigma_1}\ln \frac{1}{\delta}\right]$, we see $N\geq N_0$. Substituting $N=\left[\frac{1}{\ln \sigma_1}\ln \frac{1}{\delta}\right]$ into \cref{estimate8}, \cref{estimate9}, and using  $\ln t<t$ for $t>1$, $\sigma_3<\sigma_1$, we derive the estimates \cref{estimate10} and \cref{estimate11}. The proof is complete.
\end{proof}

With the approximation of the scattered fields $u^s(x; z)$ by $u_N^{s,\delta} (x; z)$ in \cref{Noisy-Fourier-Bessel}, we achieve the approximating indicator functions $\mathcal{I}_{{\rm s},N}(x)$ and $\mathcal{I}_{{\rm h},N}(x)$, which have the same forms as those in \cref{obstacleindicator4}-\cref{obstacleindicator5}. And by analogous arguments in Theorem \ref{thm:estimates_indicator_obstacle}, we have the following stability result on the indicator functions for the inverse cavity scattering.
\begin{theorem}\label{thm:estimates_indicator_cavity}
	Let $\sigma_1, \sigma_2$ and $N_0$ be defined in \cref{thm:estimates_cavity}. Then for $0<\delta<\sigma_1^{-N_0}$ and $N=\left[\frac{1}{\ln \sigma_1}\ln \frac{1}{\delta}\right]$, there exist positive constants $K_{\rm s}$ and $K_{\rm h}$, such that
	\begin{align*}
		\mathcal{I}_{{\rm s},N}(x) & \le K_{\rm s} \delta^{\beta}, \\ 
		\mathcal{I}_{{\rm h},N}(x) & \le K_{\rm h} \delta^{\beta}|\ln \delta|,
	\end{align*}
	where $\beta= \ln \sigma_2/\ln \sigma_1$.
\end{theorem}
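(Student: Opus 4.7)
The plan is to mimic the proof of \cref{thm:estimates_indicator_obstacle} almost verbatim, with the Hankel-based pointwise estimates replaced by the Bessel-based ones produced inside the proof of \cref{thm:estimates_cavity}. First I would extract from the proof of \cref{thm:estimates_cavity} the \emph{pointwise} bounds (not just the $L^2(\partial D)$ bounds stated in the theorem), namely, for $x\in\partial D$,
\begin{align*}
\left|u^s(x;z)-u_N^{s,\delta}(x;z)\right| &\le K_7\bigl(\|u^s\|_{L^2(\partial B_\rho)}+\|u^s\|_{L^2(\partial B_R)}\bigr)\,\delta^{\beta},\\
\left|\nabla\bigl(u^s(x;z)-u_N^{s,\delta}(x;z)\bigr)\right| &\le K_8\bigl(\|u^s\|_{L^2(\partial B_\rho)}+\|u^s\|_{L^2(\partial B_R)}\bigr)\,\delta^{\beta}|\ln\delta|,
\end{align*}
which arise upon inserting $N=[\frac{1}{\ln\sigma_1}\ln\frac{1}{\delta}]$ into the displayed chains of inequalities that lead to \cref{estimate9,estimate91}, and using $\sigma_3<\sigma_1$ together with $\ln t<t$ exactly as in the obstacle case.

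For the sound-soft cavity, the boundary condition \cref{eq:boundary_condition} gives $u(x;z)=u^s(x;z)+u^i(x;z)=0$ on $\partial D$, so that for $x\in\partial D$
\begin{equation*}
\mathcal{I}_{{\rm s},N}(x)=\int_{\Gamma}\left|u_N^{s,\delta}(x;z)+u^i(x;z)\right|\mathrm{d}s(z)=\int_{\Gamma}\left|u_N^{s,\delta}(x;z)-u^s(x;z)\right|\mathrm{d}s(z),
\end{equation*}
and plugging in the first pointwise estimate yields the bound $\mathcal{I}_{{\rm s},N}(x)\le K_{\rm s}\delta^{\beta}$ with $K_{\rm s}=K_7 C_0$, $C_0$ being the analogous integral of the norms over $\Gamma$.

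For the sound-hard cavity, the condition $\partial_\nu u=0$ on $\partial D$ yields $\nabla(u^s+u^i)\cdot\nu=0$, so that on $\partial D$
\begin{equation*}
\nabla\bigl(u_N^{s,\delta}+u^i\bigr)\cdot\nu^{\delta}=\nabla\bigl(u_N^{s,\delta}-u^s\bigr)\cdot\nu^{\delta}+\nabla\bigl(u^s+u^i\bigr)\cdot(\nu^{\delta}-\nu).
\end{equation*}
I would then reproduce the argument of \cref{thm:estimates_indicator_obstacle} verbatim: the first summand is controlled by the gradient pointwise estimate, while for the second summand the bound
\begin{equation*}
|\nu^{\delta}(x;z_0)-\nu(x;z_0)|\le\frac{2\bigl|\nabla(u_N^{s,\delta}(x;z_0)-u^s(x;z_0))\bigr|}{\bigl|\nabla(u_N^{s,\delta}(x;z_0)+u^i(x;z_0))\bigr|}
\end{equation*}
combined with the maximum selection \cref{unitvector} turns the quotient into a harmless factor after multiplication by $|\nabla(u_N^{s,\delta}(x;z)+u^i(x;z))|$. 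Integrating over $\Gamma$ and summing the two contributions yields $\mathcal{I}_{{\rm h},N}(x)\le K_{\rm h}\delta^{\beta}|\ln\delta|$.

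The proof is therefore a mechanical transcription; the one spot requiring care is the pointwise (rather than $L^2$) gradient estimate at $x\in\partial D$, because the approximate normal $\nu^{\delta}(x;z_0)$ is defined pointwise, so the control of $|\nu^{\delta}-\nu|$ used in the triangle inequality must be pointwise as well. This is exactly the step that was the key obstacle in the obstacle case, and I anticipate it will again be the delicate point here; once the pointwise Bessel-series gradient estimate is extracted from the chain of inequalities leading to \cref{estimate91}, the remainder is routine.
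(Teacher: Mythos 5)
Your proposal matches the paper's proof, which is given only implicitly: the paper states that the result follows ``by analogous arguments'' to \cref{thm:estimates_indicator_obstacle}, i.e.\ exactly the transcription you describe, using the pointwise Bessel-series estimates extracted from the chains of inequalities leading to \cref{estimate9,estimate91} with $N=\left[\frac{1}{\ln\sigma_1}\ln\frac{1}{\delta}\right]$ and $\sigma_3=\sigma_1/\sigma_2$. Your sound-hard decomposition differs only trivially (you put $\nu^\delta$ on the $\nabla(u_N^{s,\delta}-u^s)$ term and $\nabla(u^s+u^i)$ against $\nu^\delta-\nu$, which needs one extra triangle inequality before the cancellation via \cref{unitvector}); otherwise the argument is the same and is correct.
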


\section{Numerical algorithm and examples}\label{sec:example}

Based on the indicator functions in the previous sections, the direct imaging scheme for reconstructing the shape of scatterer is given in the following {\bf Algorithm}.
\begin{table}[htp]
	\centering
	\begin{tabular}{cp{.8\textwidth}}
		\toprule
		\multicolumn{2}{l}{{\bf Algorithm:}\quad Reconstruction by the Fourier-Bessel approximated imaging scheme} \\
		\midrule
		{\bf Step 1} & Given a monochromatic frequency or a spectrum of frequencies,  collect the corresponding noisy near-field data due to the point sources and the obstacle/cavity with sound-soft or sound-hard boundary condition. \\
		{\bf Step 2} & Choose an approximate truncation $N$ and compute the approximate scattered field by the Fourier-Hankel expansion \cref{Fourier-Hankel} or Fourier-Bessel expansion \cref{Fourier-Bessel}.\\
		{\bf Step 3} & Select a suitable imaging mesh  $\mathcal{T}$ covering the target scatterer.  In terms of the boundary condition, for each imaging point $x\in \mathcal{T}$, evaluate the perturbed indicator functions $\mathcal{I}_{{\rm s}, N}(x)$ or $\mathcal{I}_{{\rm h}, N}(x)$, according to\cref{obstacleindicator4,obstacleindicator5} for the obstacle or \cref{cavityindicator1,cavityindicator2} for the cavity.\\
		{\bf Step 4} & The boundary of the scatterer can be recovered as the zeros of the monochromatic indicator functions or their superpositions with respect to the frequencies. \\
		\bottomrule
	\end{tabular}
\end{table}

In this section, we present several numerical examples of the inverse obstacle/cavity scattering problems in two dimensions to illustrate the effectiveness and applicability of our algorithm. In our numerical experiments, the boundary of the model scatterer is represented by the parametric form
$$
	x(t) = (x_1(t), x_2(t)), \quad 0\leq t<2\pi
$$
which is a simple smooth and closed curve. We compute the synthetic near-field data by solving the forward problems via the boundary integral equation method \cite{Colton}.  The exact boundary curves of the scatterers are listed in the parametric form and plotted in \cref{fig:model_shapes}. 
\begin{figure}
	\centering	
	\subfigure[]{\includegraphics[width=0.3\textwidth]{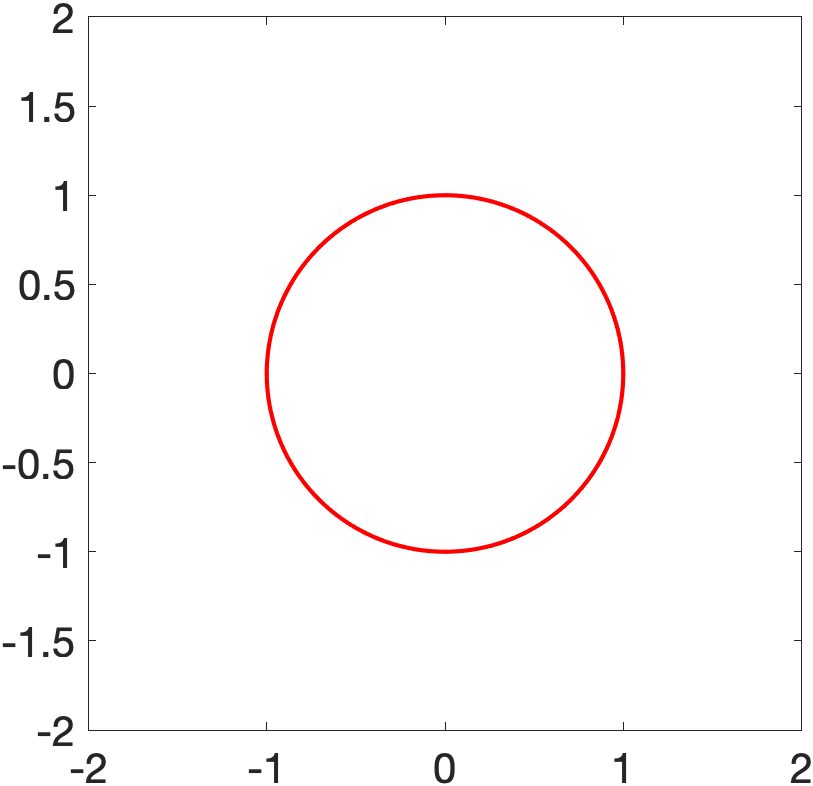}}\quad
	\subfigure[]{\includegraphics[width=0.3\textwidth]{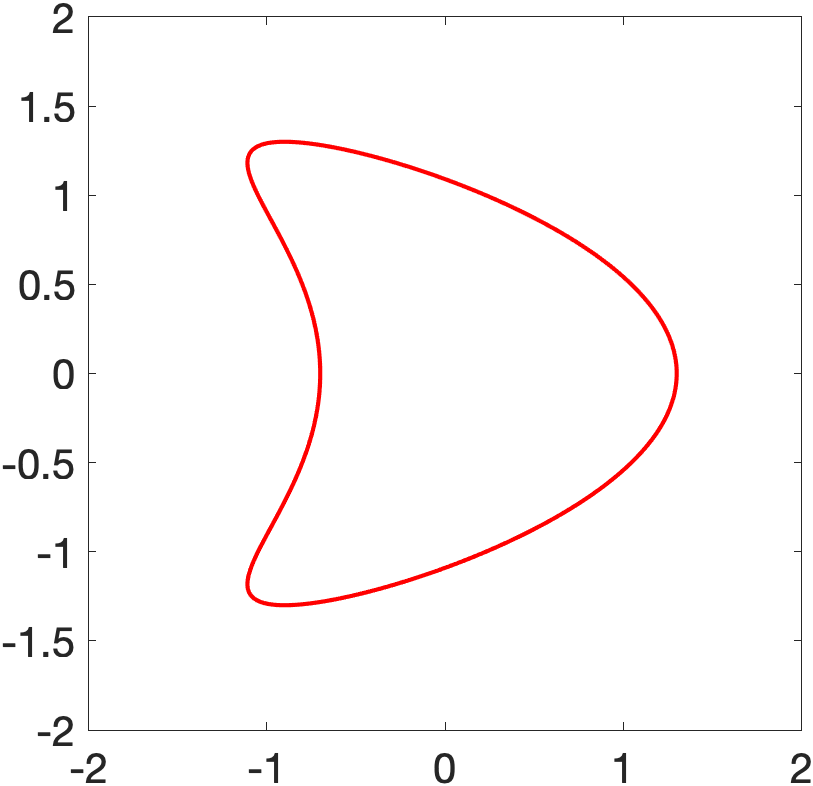}}\quad
	\subfigure[]{\includegraphics[width=0.3\textwidth]{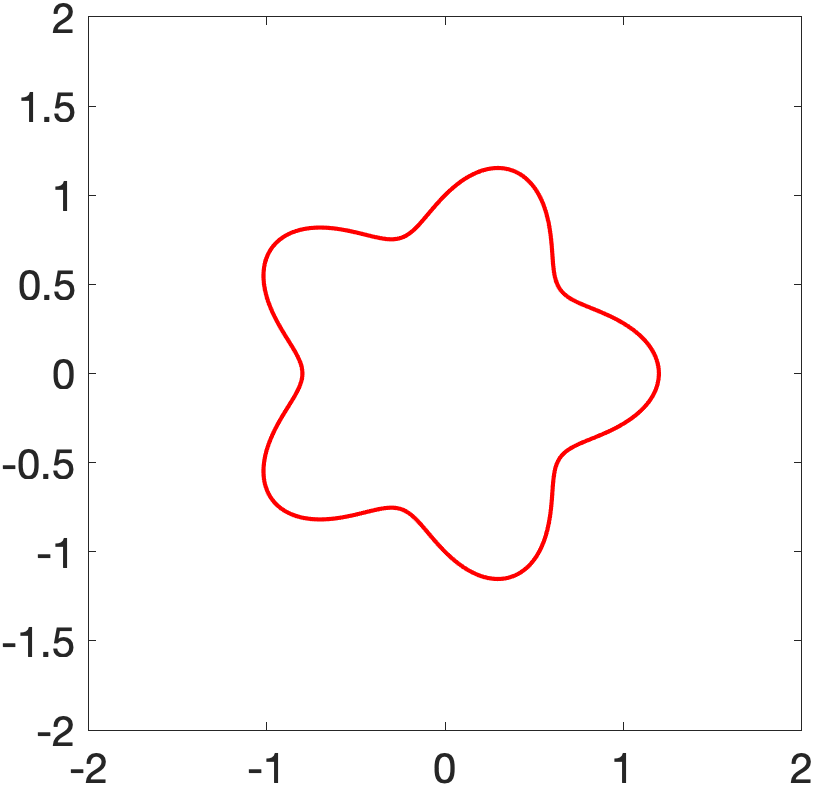}}
	\caption{Model scatterers. (a) circle: $(\cos t, \sin t)$; (b) kite: $(\cos t+0.6\cos 2t-0.3, 1.3\sin t)$; (c) starfish: $(1+0.2\cos 5t)(\cos t,\sin t)$.}\label{fig:model_shapes}
\end{figure}

With the purpose of testing the stability of the proposed method, we further add some random noise to the computed scattered data by
$$
u^{s,\delta}=u^s+\delta r_1 |u^s|\mathrm{e}^{\mathrm{i}\pi r_2}
$$
where $r_1, r_2$ are two uniformly distributed random number ranging from $-1$ to $1$, and $\delta>0$ denotes the relative noise level. 

For the ease of visualization and comparison, the reciprocals (multiplicative inverse) of the normalized indicator functions are depicted in the following figures. Consequently, the profile of the scatterer can be qualitatively reconstructed as the significant peak levels in the images. As a rule of thumb, truncation of the Fourier-Bessel expansion is chosen as $N=[|\ln\delta|]+1$ for the exterior problem and $N=[1.5|\ln\delta|]+1$ for the interior problem .

\subsection{Exterior problems}

In this part for the exterior problems, we present the results for imaging the obstacles with sound-soft and sound-hard boundary conditions. We collect the scattered data on the measurement circle centered at the origin with radius 2.2.  In each example, 12 point sources and 128 receivers are equally placed on the measurement circle. These sources and receivers are marked by the small red and blue points, respectively.  Hereafter, the boundary of exact obstacle is denoted by the black dashed lines. The imaging domain is chosen as $[-1.5, 1.5]^2$ with $150\times 150$ equally spaced imaging grid. 

\begin{example}
The goal of the first example is to reconstruct the sound-soft obstacles. Here wavenumbers $k=3$ and $k=6$ are used. The reconstructions are shown in \cref{fig:obstacle_Dirichlet} and it can be seen that the boundary curves are well recovered by the proposed imaging method.
\begin{figure}
	\centering	
	\subfigure[]{\includegraphics[width=0.3\textwidth]{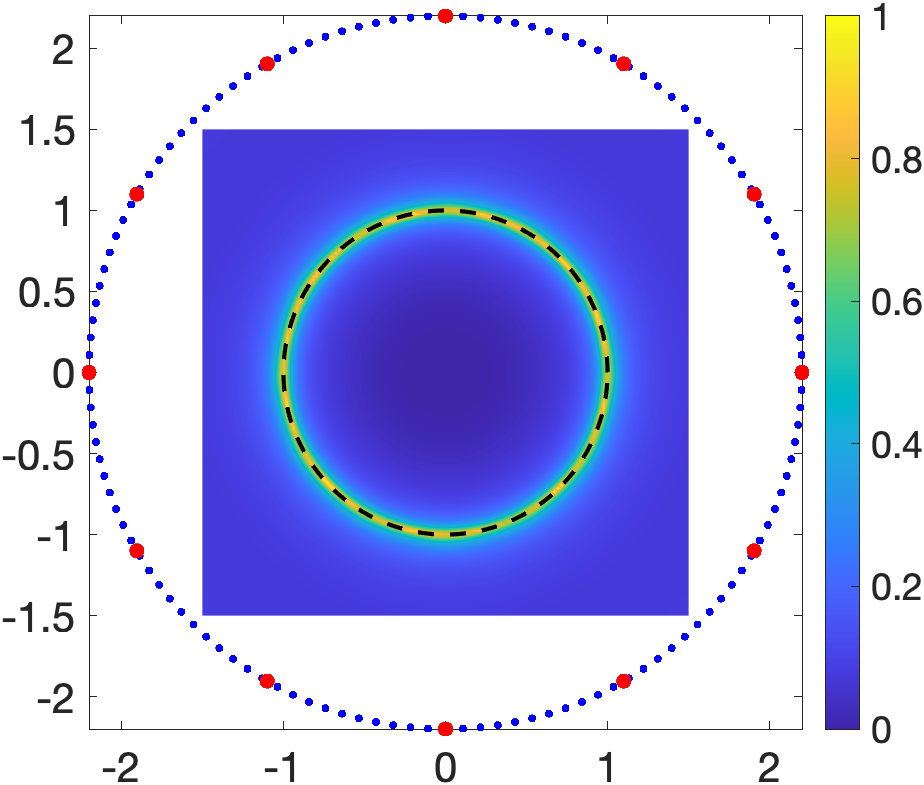}}\quad
	\subfigure[]{\includegraphics[width=0.3\textwidth]{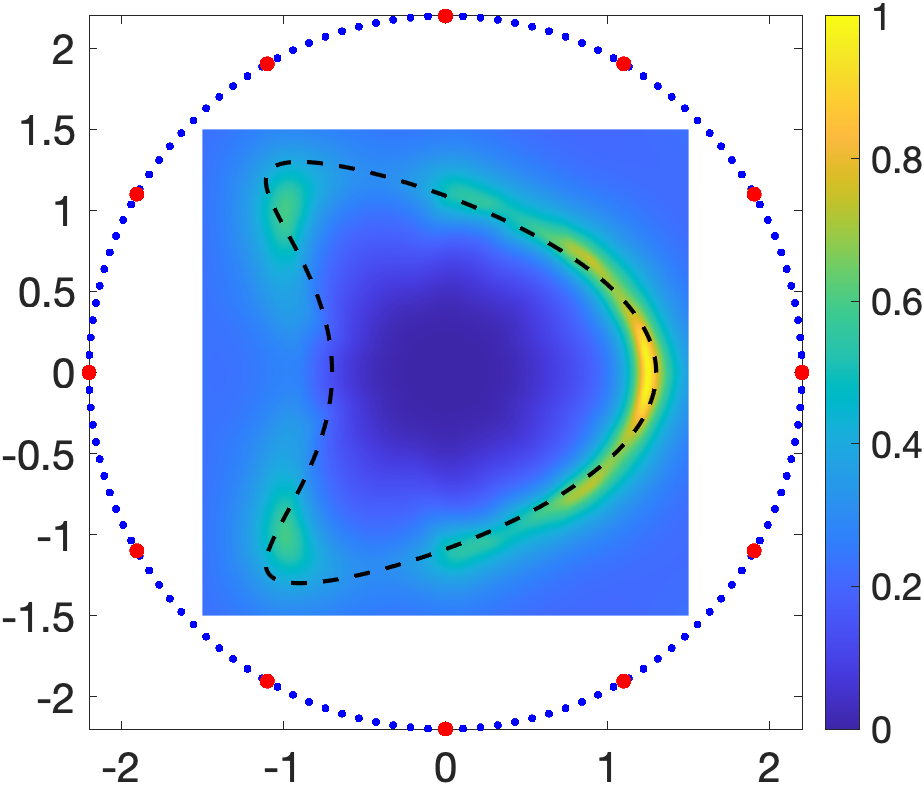}}\quad
	\subfigure[]{\includegraphics[width=0.3\textwidth]{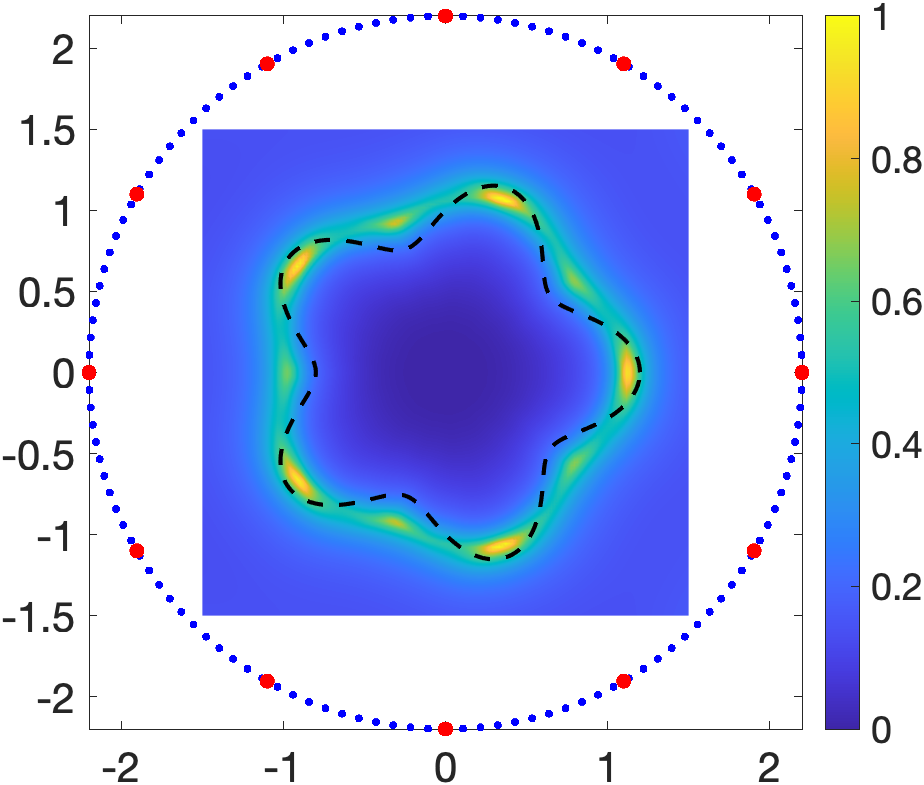}}\\
	\subfigure[]{\includegraphics[width=0.3\textwidth]{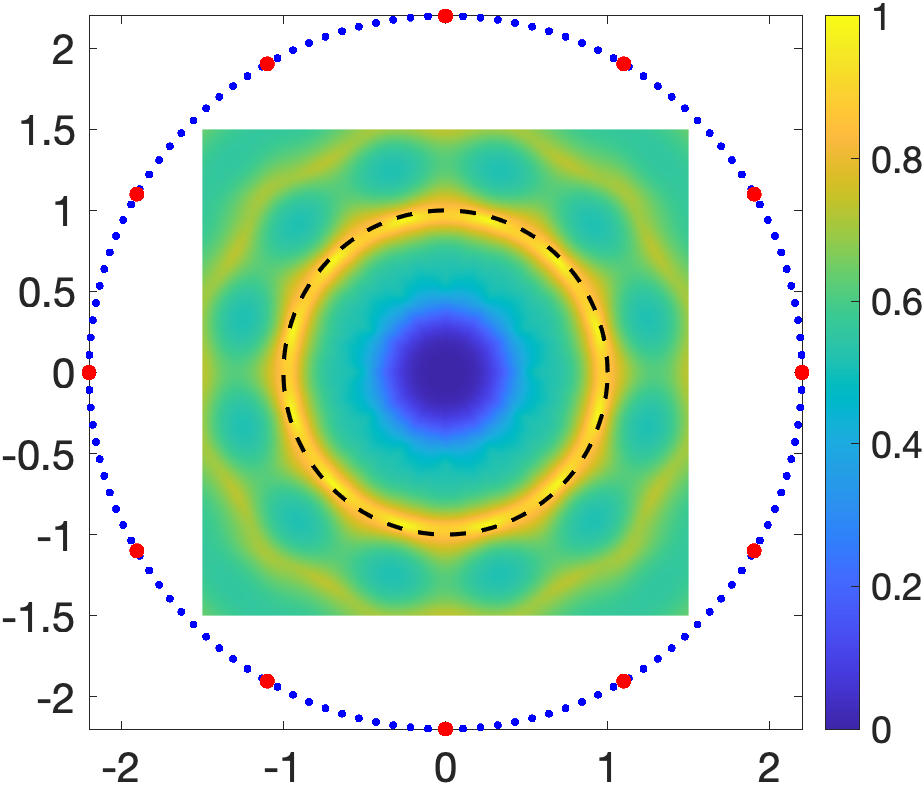}}\quad
    \subfigure[]{\includegraphics[width=0.3\textwidth]{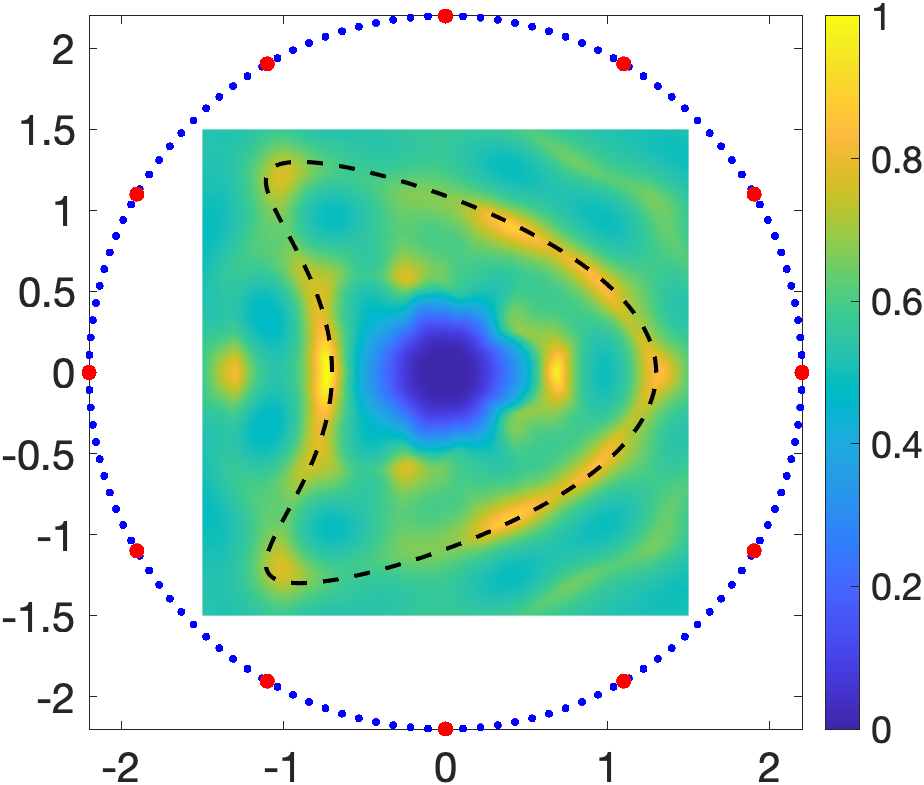}}\quad
    \subfigure[]{\includegraphics[width=0.3\textwidth]{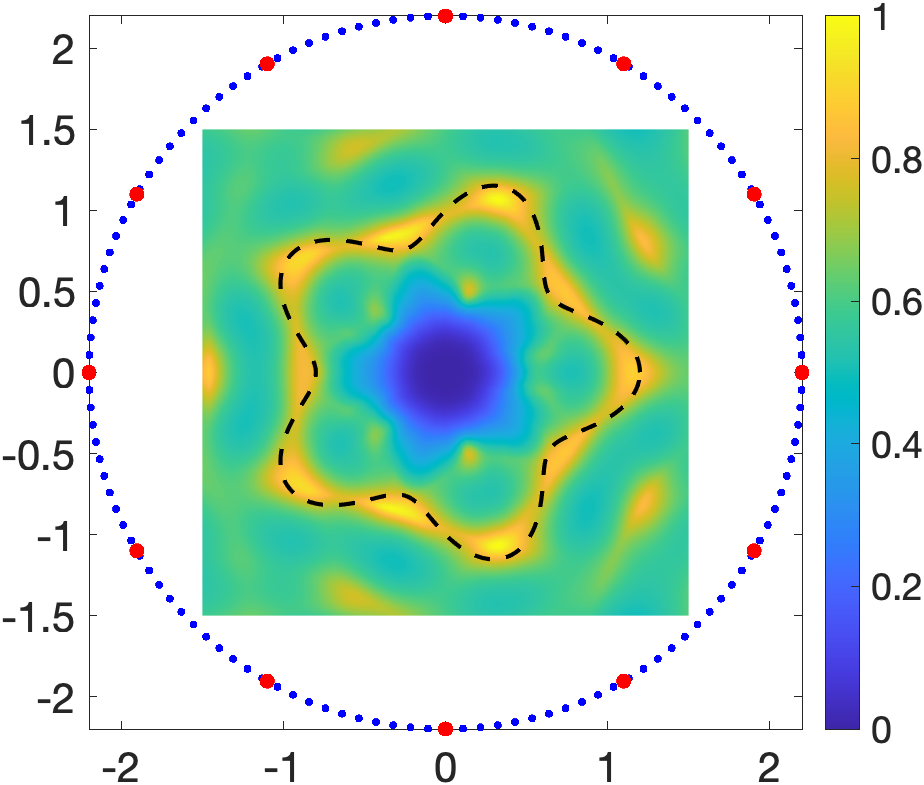}}
	\caption{Reconstruction of sound-soft obstacles with 5\% noise. Top row:  $k=3$; Bottom row: $k=6$. }\label{fig:obstacle_Dirichlet}
\end{figure}
\end{example}

\begin{example}
In the second example, we present the reconstructions of sound-hard obstacles. We respectively choose $k=4$ and $k=5$ for the single-frequency indicator and $k=3+\ell/2, (\ell=0, 1,\cdots, 6)$ for the multi-frequency indicator. Moreover, for the multi-frequency case, the final reconstruction is produced by the superposition of each normalized single-frequency indicators. The results are plotted in \cref{fig:obstacle_Neumann}. 
\begin{figure}
	\centering	
	\subfigure[]{\includegraphics[width=0.3\textwidth]{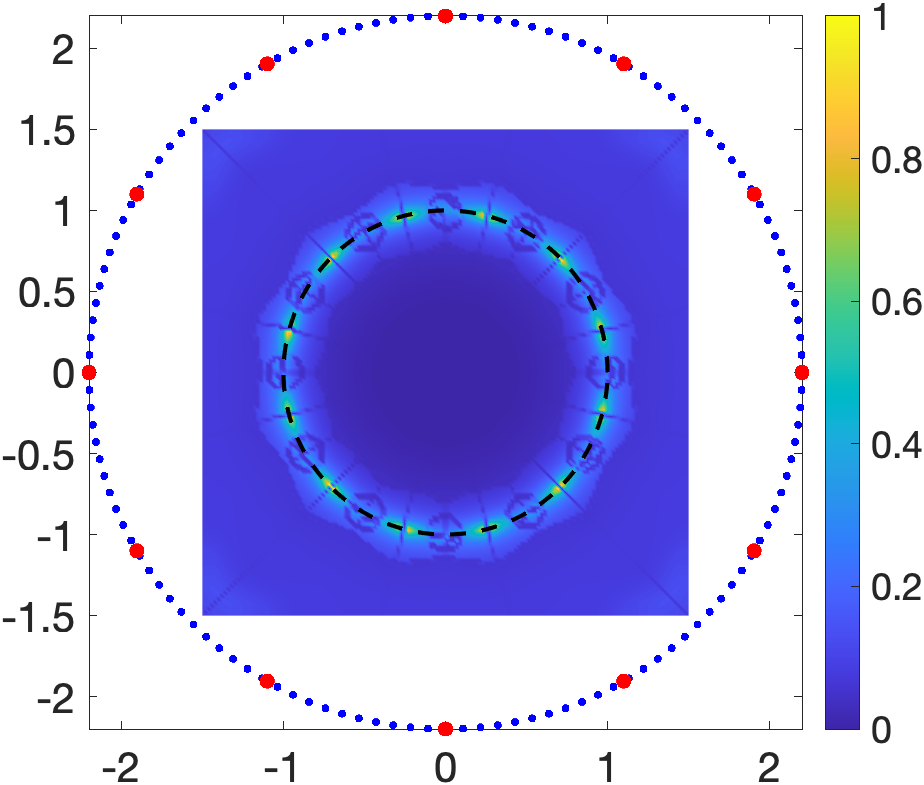}}\quad
	\subfigure[]{\includegraphics[width=0.3\textwidth]{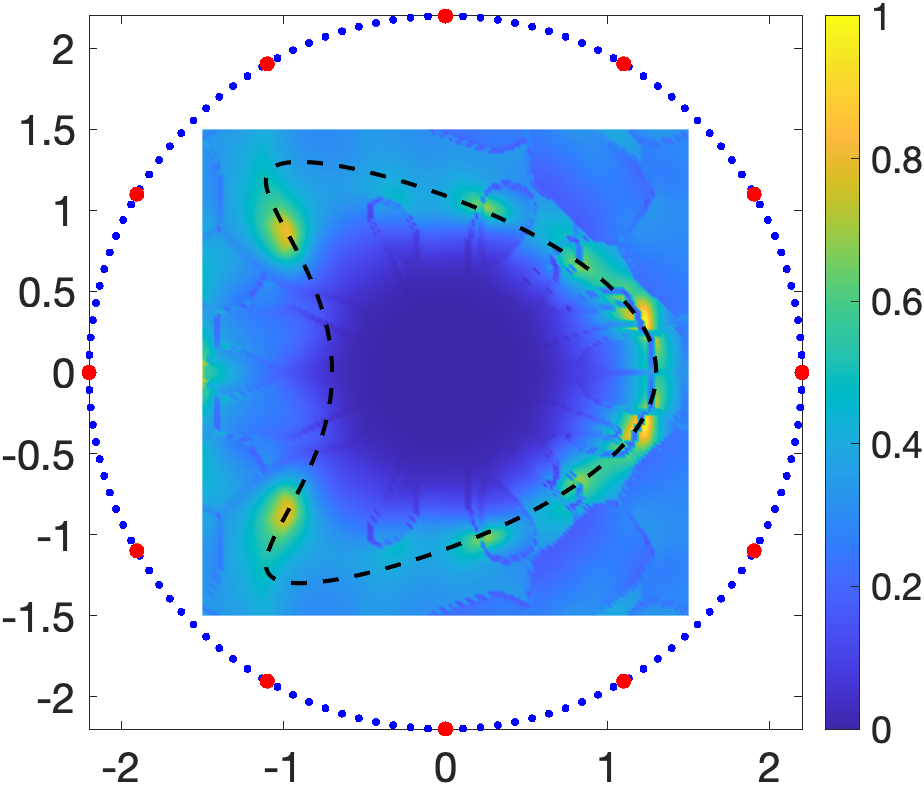}}\quad
	\subfigure[]{\includegraphics[width=0.3\textwidth]{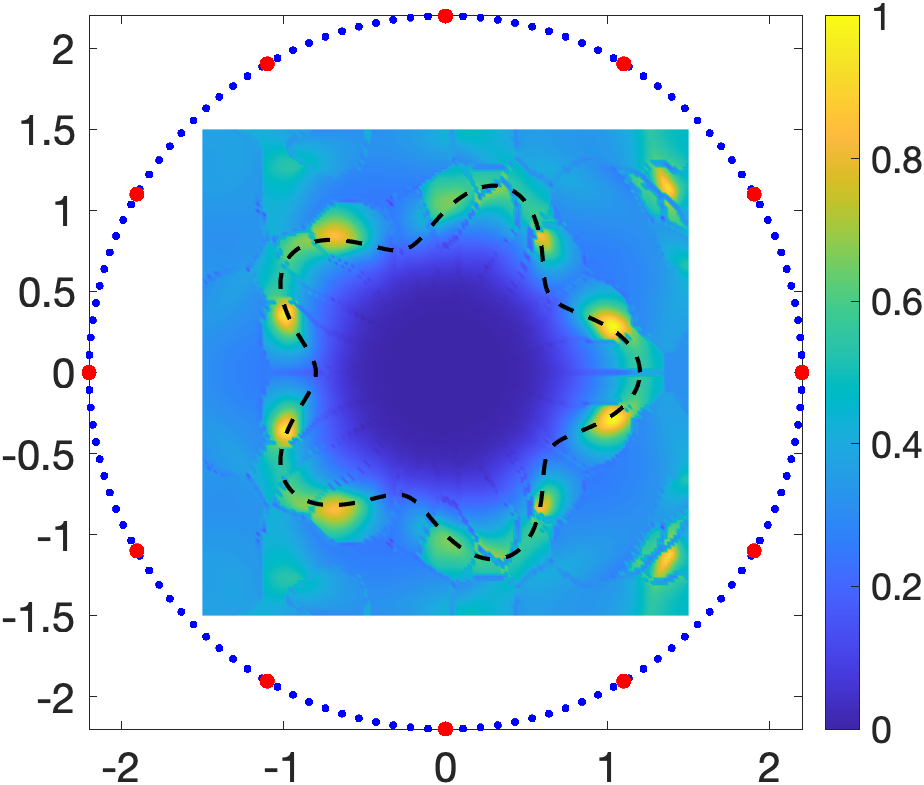}}\\
	\subfigure[]{\includegraphics[width=0.3\textwidth]{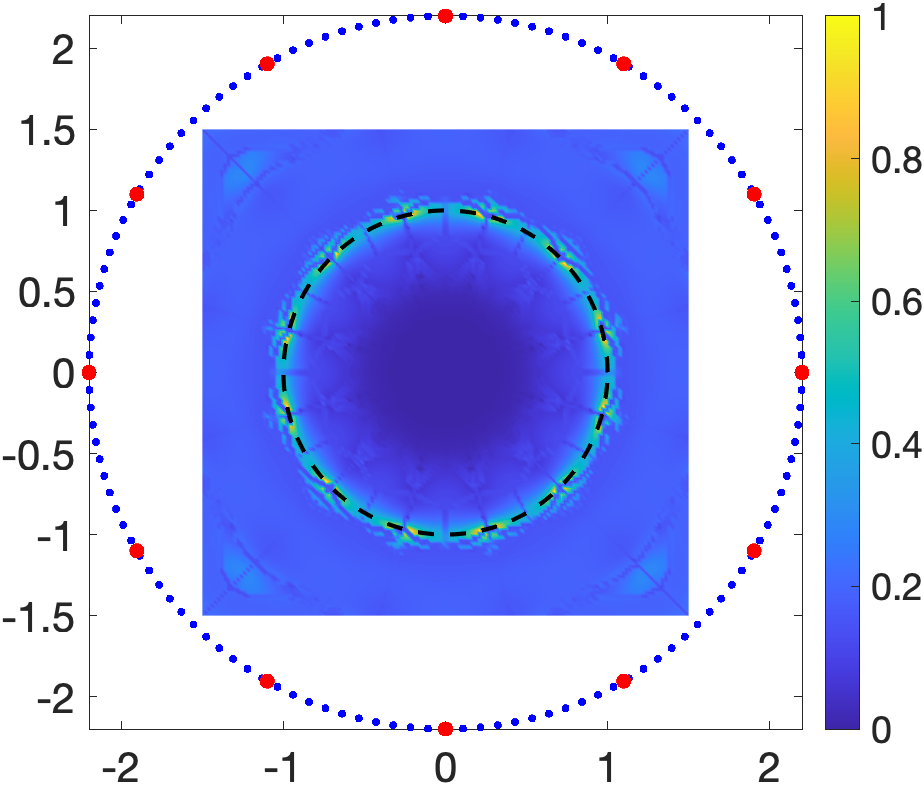}}\quad
	\subfigure[]{\includegraphics[width=0.3\textwidth]{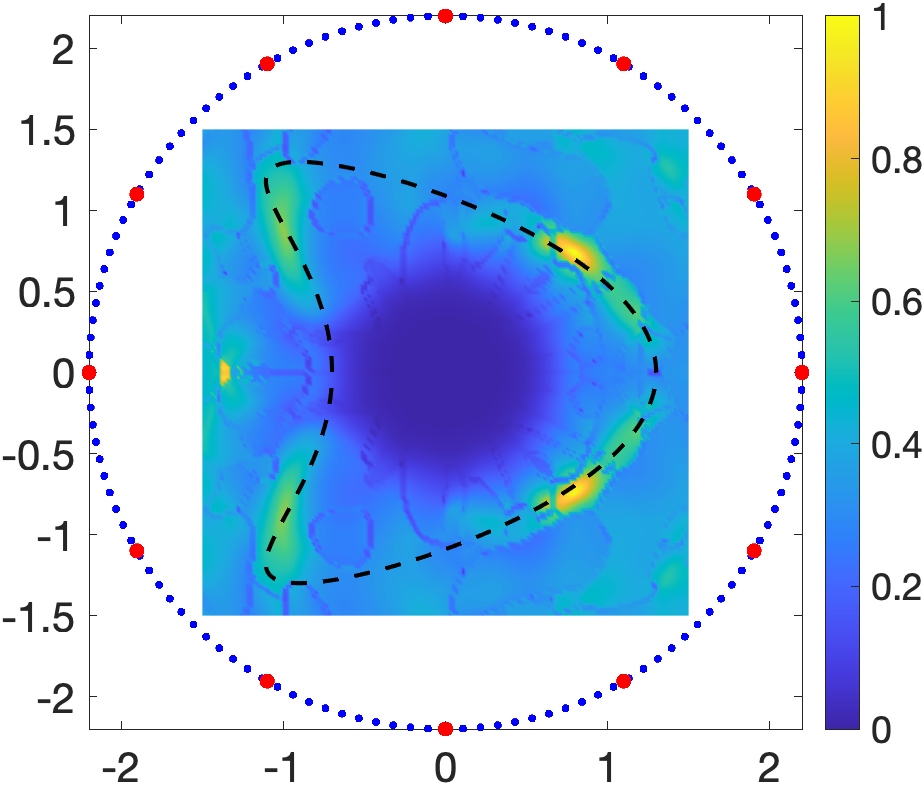}}\quad
	\subfigure[]{\includegraphics[width=0.3\textwidth]{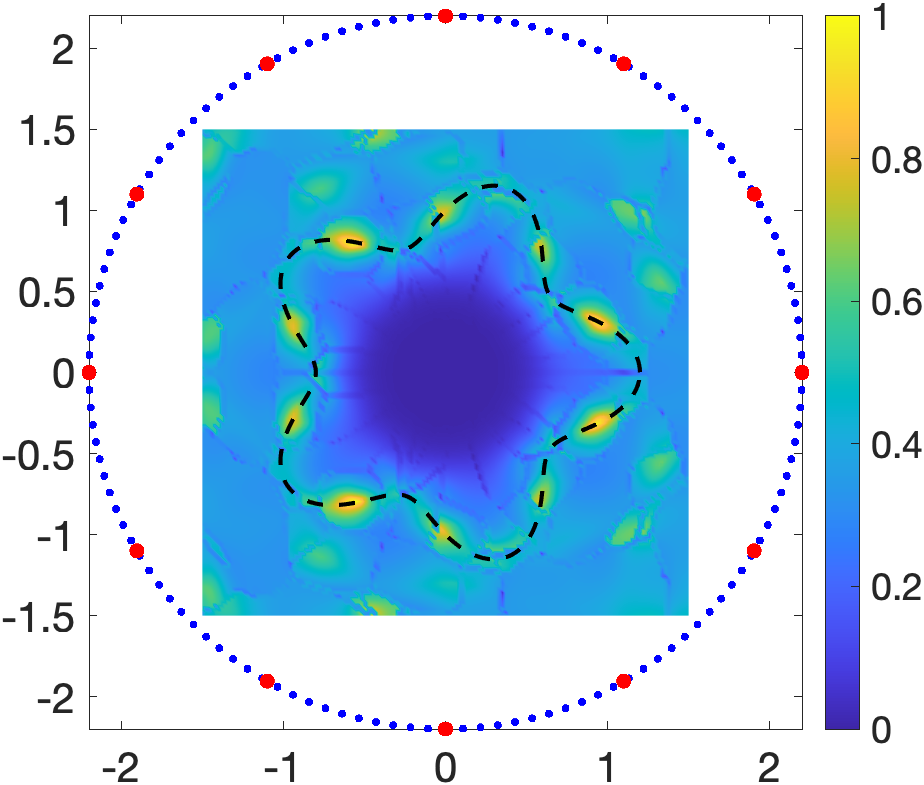}}\\
	\subfigure[]{\includegraphics[width=0.3\textwidth]{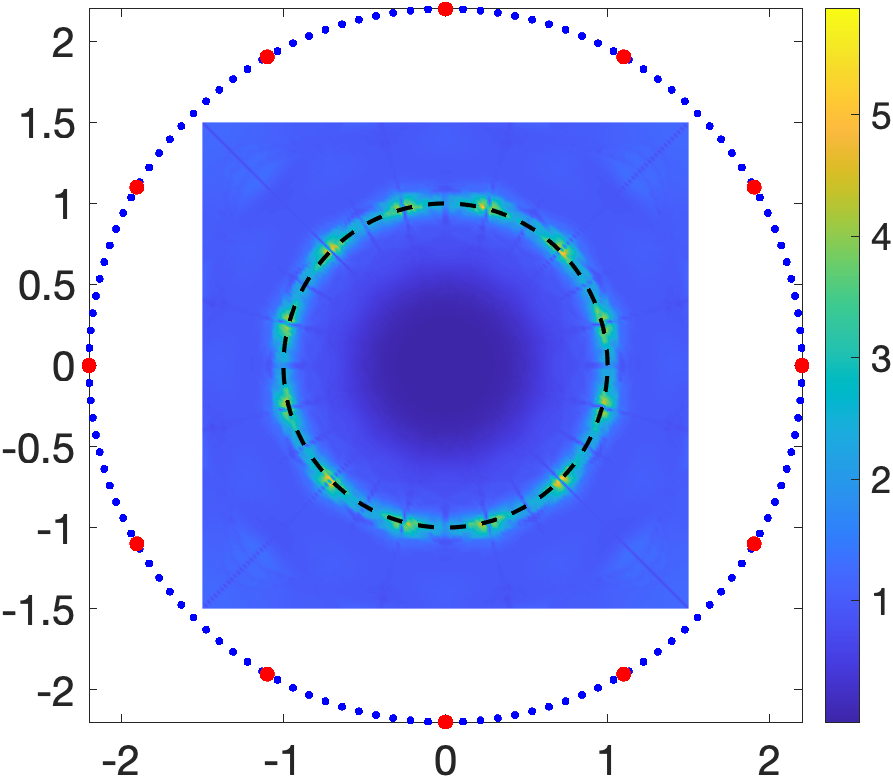}}\quad
	\subfigure[]{\includegraphics[width=0.3\textwidth]{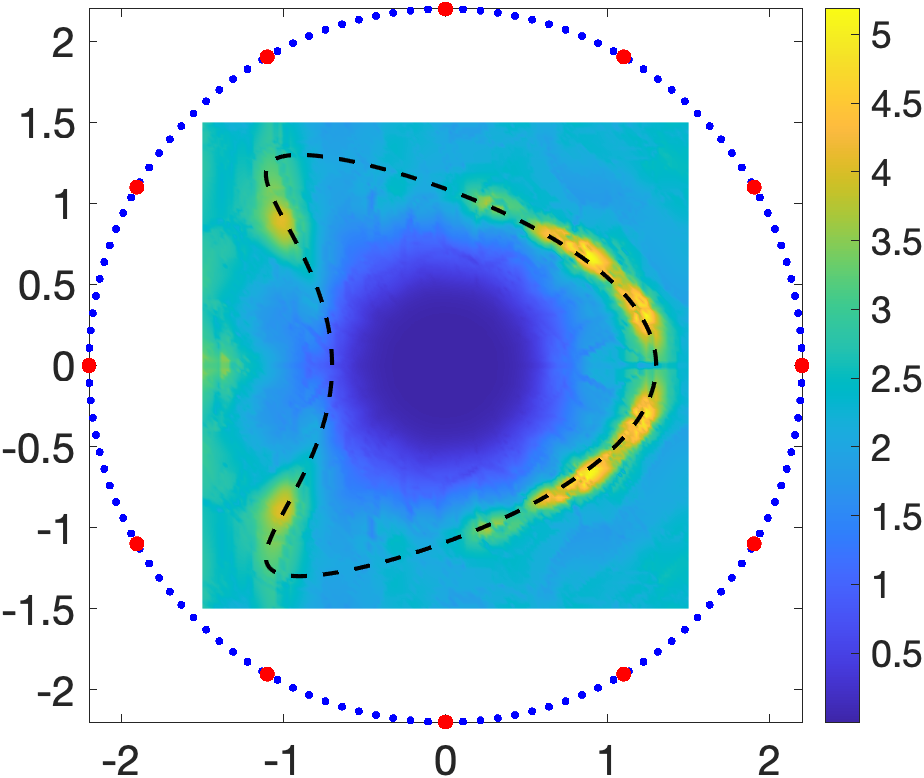}}\quad
	\subfigure[]{\includegraphics[width=0.3\textwidth]{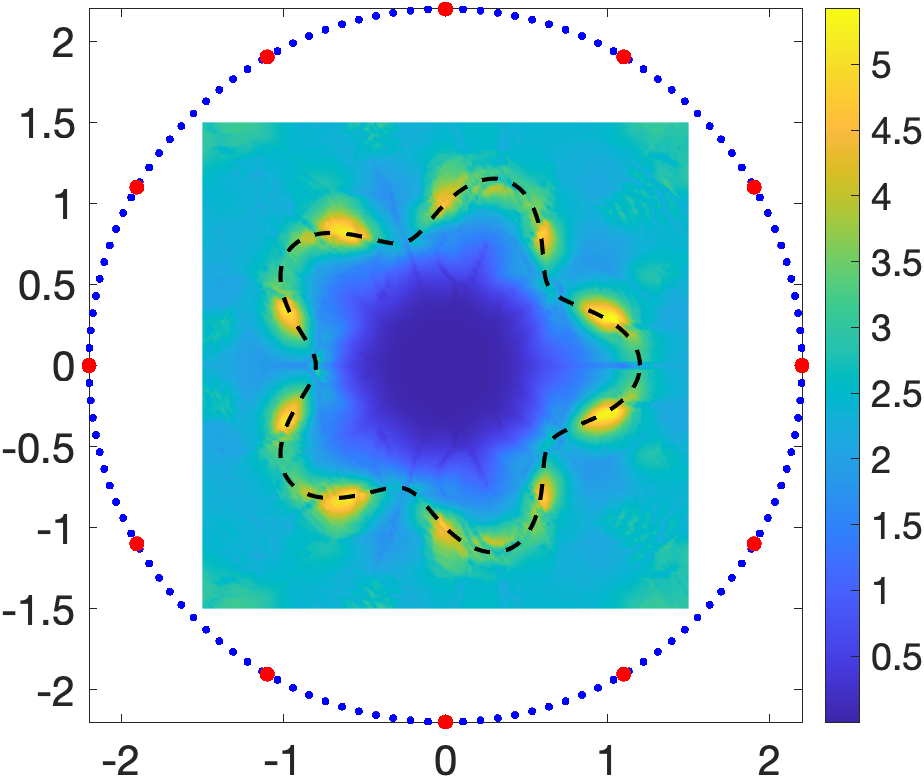}}
	\caption{Reconstruction of sound-hard obstacles with 2\% noise. Top row: $k=4$; Middle row: $k=5$; Bottom row: superposition with multiple frequencies $k=3, 3.5, \cdots, 6$.}
	\label{fig:obstacle_Neumann}
\end{figure}
\end{example}

It deserves noting that, no matter what the boundary type is, the sharp corners facing outside are superiorly reconstructed. This is because these corners are relatively more closer to the sources and receivers. In other words, they could be more adequately illuminated by the point sources in certain sense. We would like to remark that, if the incident wave is alternatively chosen as the plane wave, then the same indicator functions can be defined and our algorithm procedure remains valid. The plane wave incidence would make the scattered information be captured in a more balanced manner and consequently improve the reconstructions. Since the case of plane wave is beyond the current study, the numerical results with plane wave will be reported in a future work.

\subsection{Interior problems}

Similar to the preceding examples for the exterior problems, we present the reconstruction results for the cavities in this subsection. Both the sources and receivers are located on an interior detection curve, which is chosen as the circle centered at the origin with radius 0.5. Without otherwise specified, all the parameters used here are the same as the exterior problems. The imaging domain is chosen as $[-1.5, 1.5]^2$ with $150\times 150$ equally spaced imaging grid excluding the interior of measurement circle. For the sake of comparison, we also present the reconstructions of previous circle, kite- and starfish-shaped scatterers.

\begin{example}
In this example, we consider the recovery of sound-soft cavities with wavenumbers $k=3$ and $k=5$ with $\delta=0.05$. The reconstructions are illustrated in \cref{fig:cavity_Dirichlet}. One can observe that the circle and the starfish are relatively well recovered while two wings of the kite are far less accurate. Physically speaking, this phenomenon is probably due to the fact that the energy of point sources decays rapidly with respect to the increasing of propagating distance and the wavenumber. Hence, the scattered signals become much weaker when any portion of cavity recedes from the sources. As a result, one should not expect to obtain a large amount of information from the far side of cavity and thus the corresponding reconstruction inevitably deteriorates. Similar effects and discussions can be applied to the other examples regardless of boundary conditions.

\begin{figure}
	\centering	
	\subfigure[]{\includegraphics[width=0.3\textwidth]{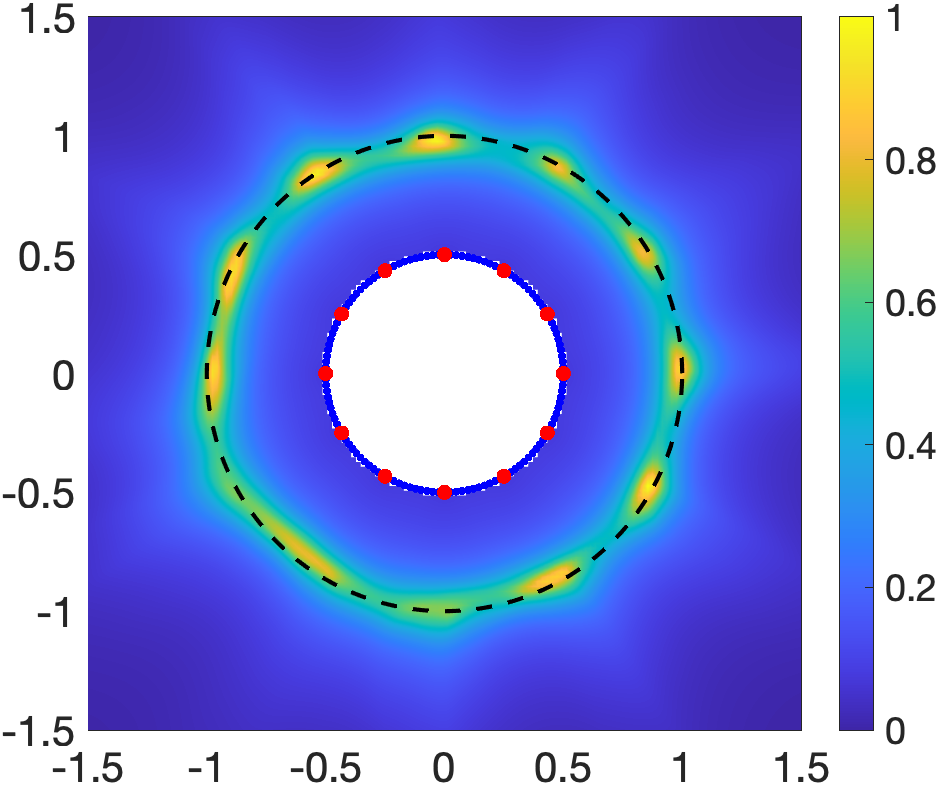}}\quad
	\subfigure[]{\includegraphics[width=0.3\textwidth]{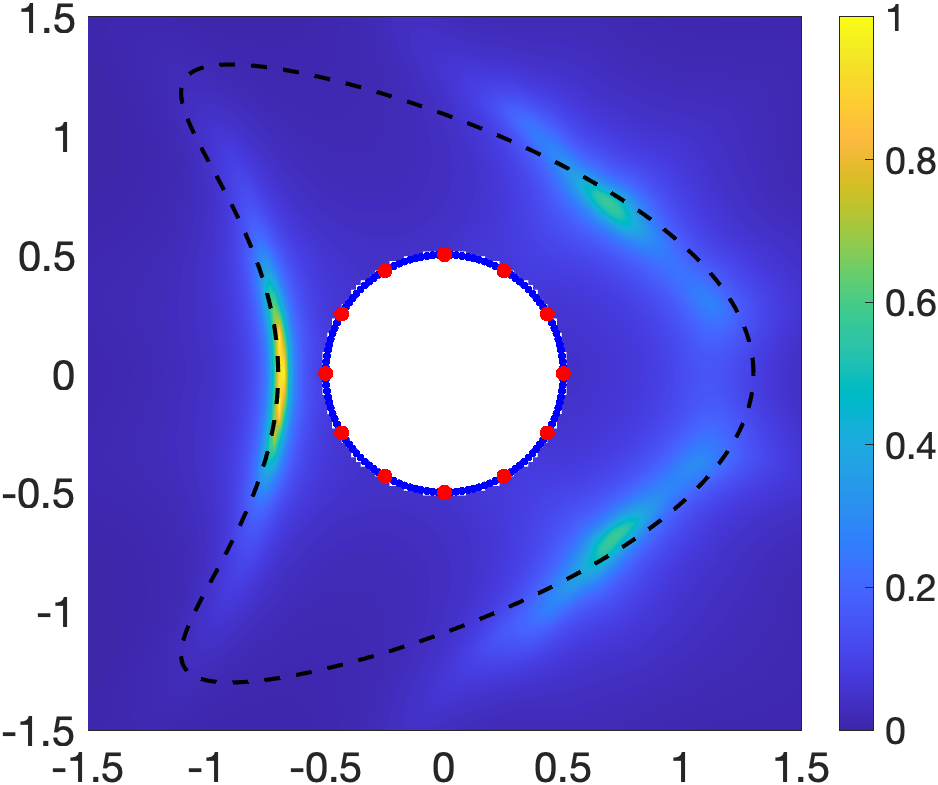}}\quad
	\subfigure[]{\includegraphics[width=0.3\textwidth]{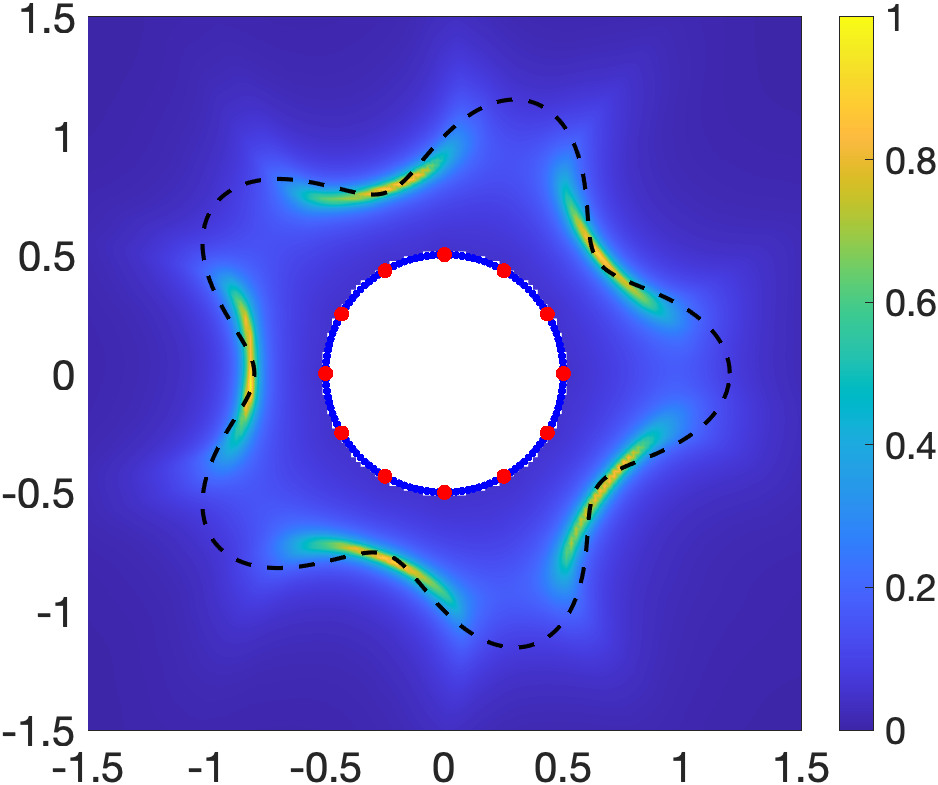}}\\
	\subfigure[]{\includegraphics[width=0.3\textwidth]{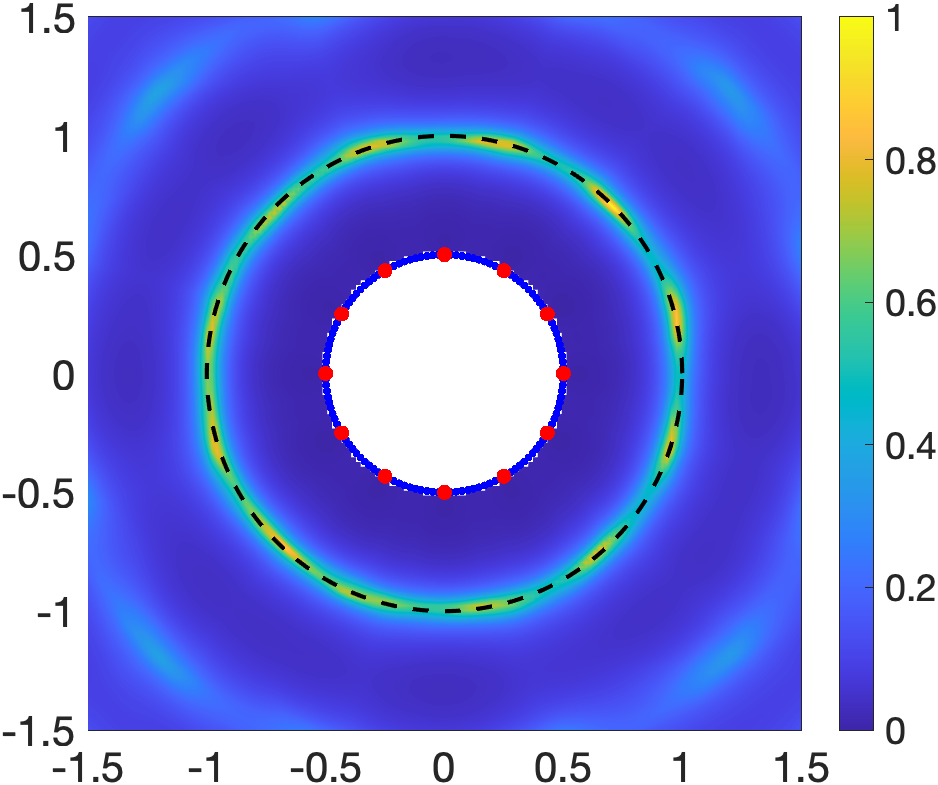}}\quad
    \subfigure[]{\includegraphics[width=0.3\textwidth]{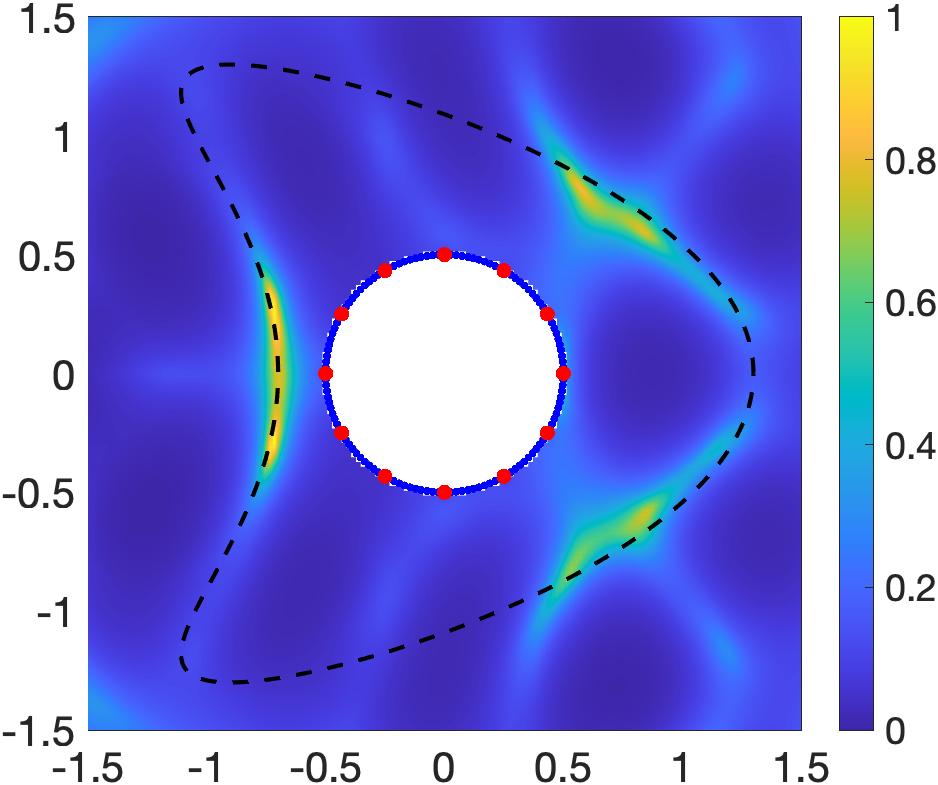}}\quad
    \subfigure[]{\includegraphics[width=0.3\textwidth]{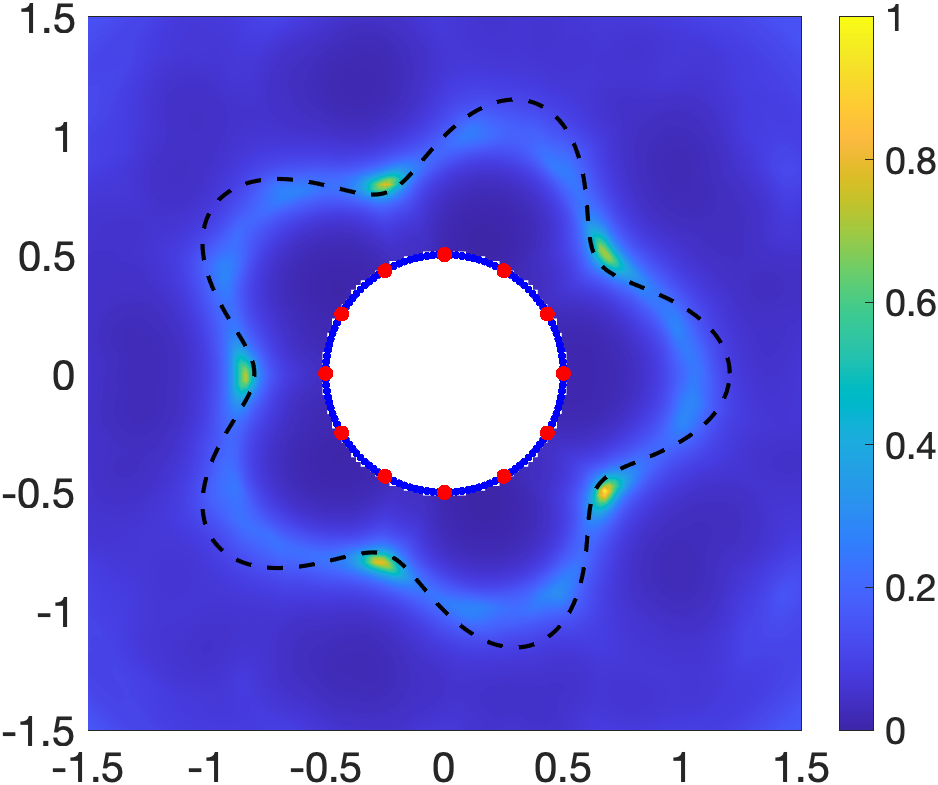}}
	\caption{Reconstruction of sound-soft cavities with 5\% noise.  Top row:  $k=3$; Bottom row: $k=5$.}
	\label{fig:cavity_Dirichlet}
\end{figure}
\end{example}

\begin{example}
In the last example, we study the reconstruction of cavities with Neumann boundary condition. The imaging results are given in \cref{fig:cavity_Neumann}. It is worthwhile noting that the sound-hard indicators are in general more sensitive to the noise-contaminated data than the sound-soft ones. In our view, the main reason is the incorporation of additional evaluation on the normal derivatives in the formulation of sound-hard indicators.
\begin{figure}
	\centering	
	\subfigure[]{\includegraphics[width=0.3\textwidth]{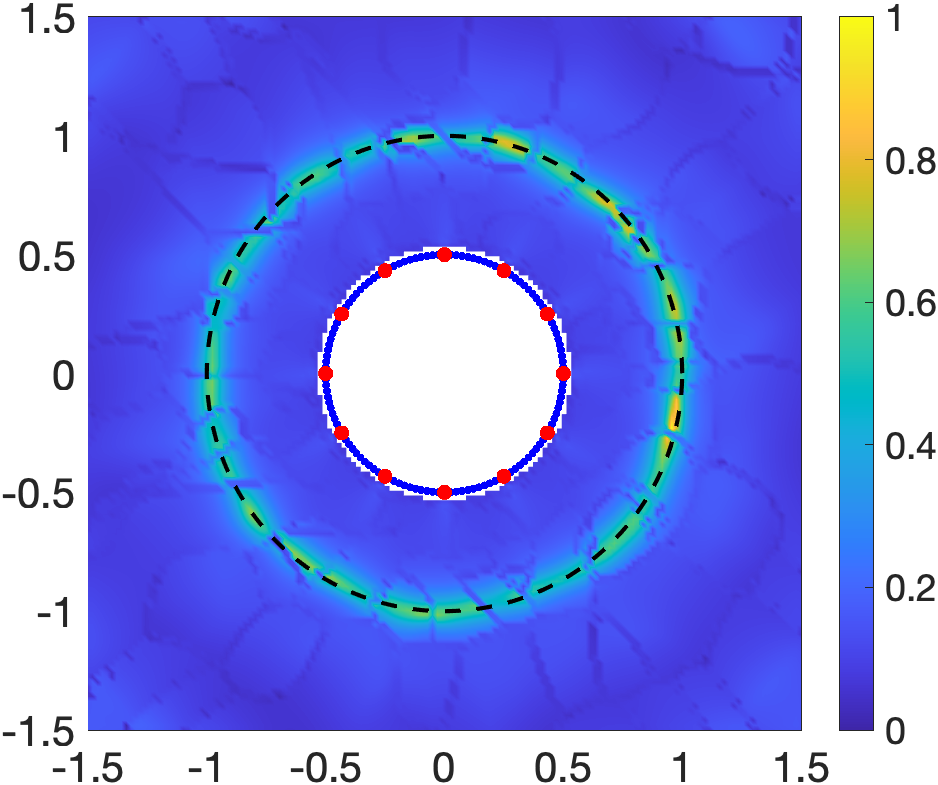}}\quad
	\subfigure[]{\includegraphics[width=0.3\textwidth]{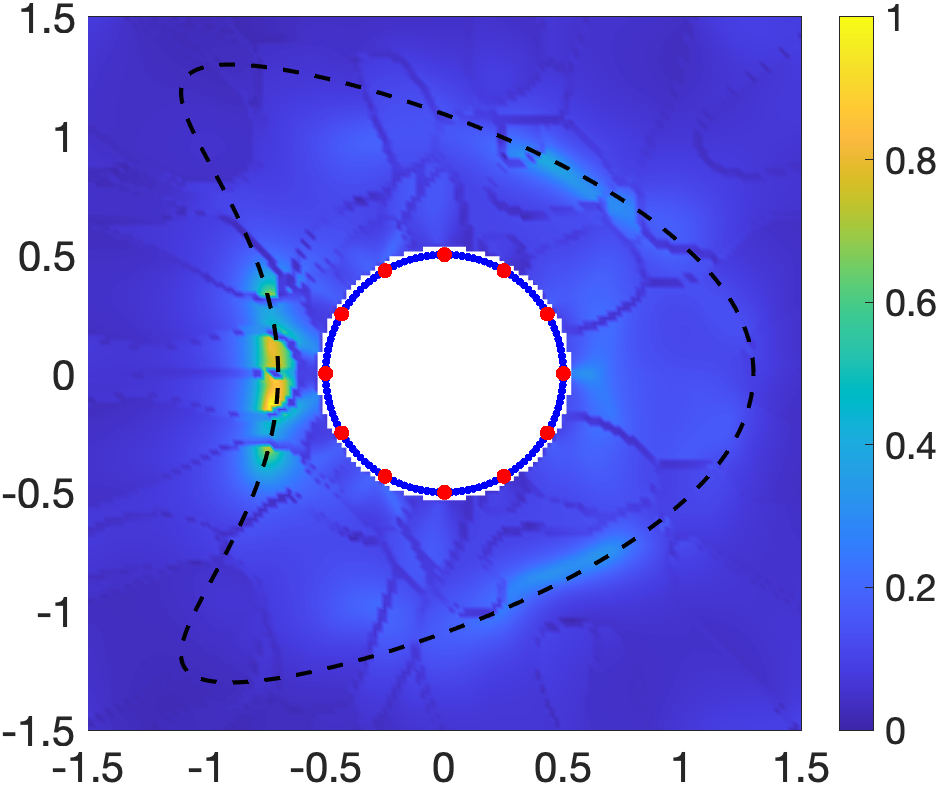}}\quad
	\subfigure[]{\includegraphics[width=0.3\textwidth]{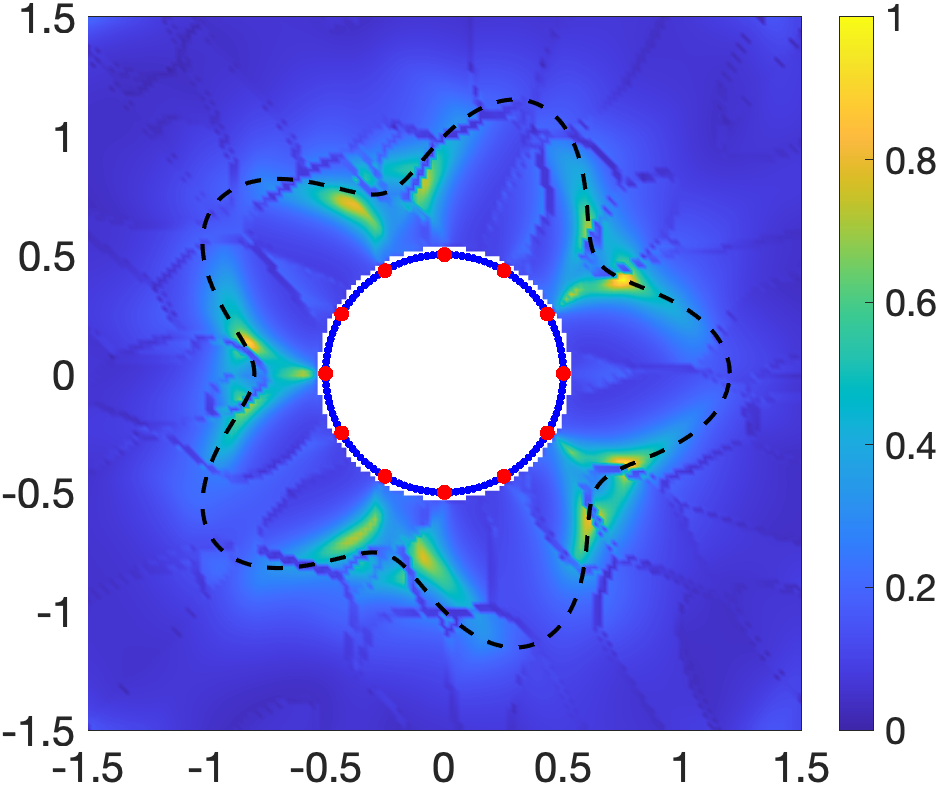}}\\
	\subfigure[]{\includegraphics[width=0.3\textwidth]{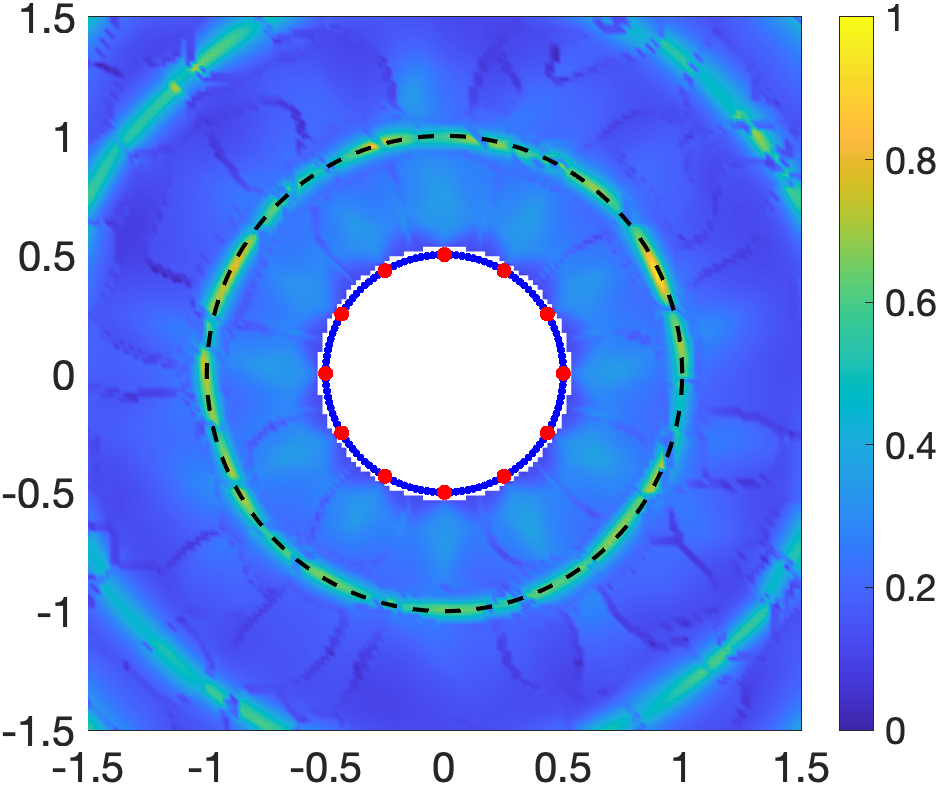}}\quad
	\subfigure[]{\includegraphics[width=0.3\textwidth]{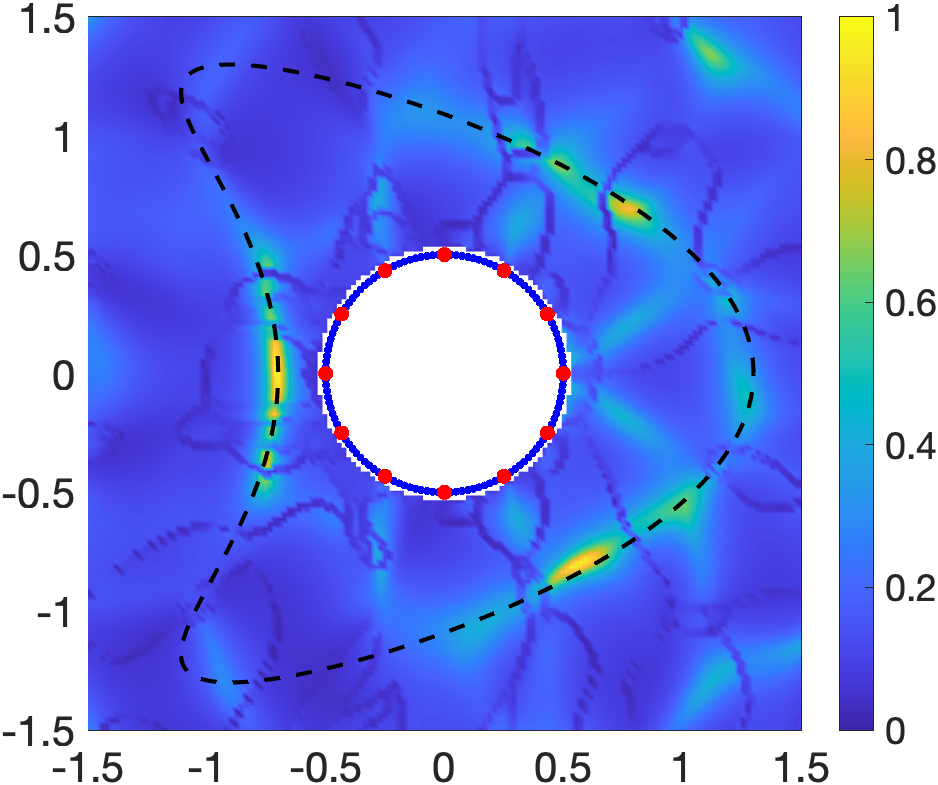}}\quad
	\subfigure[]{\includegraphics[width=0.3\textwidth]{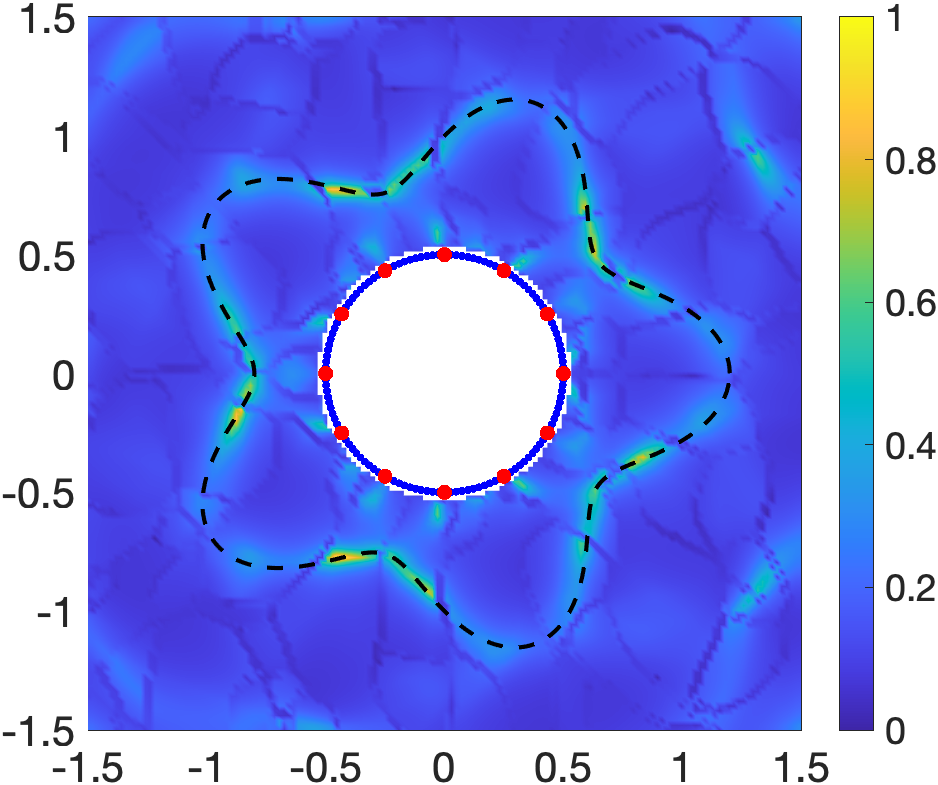}}\\
	\subfigure[]{\includegraphics[width=0.3\textwidth]{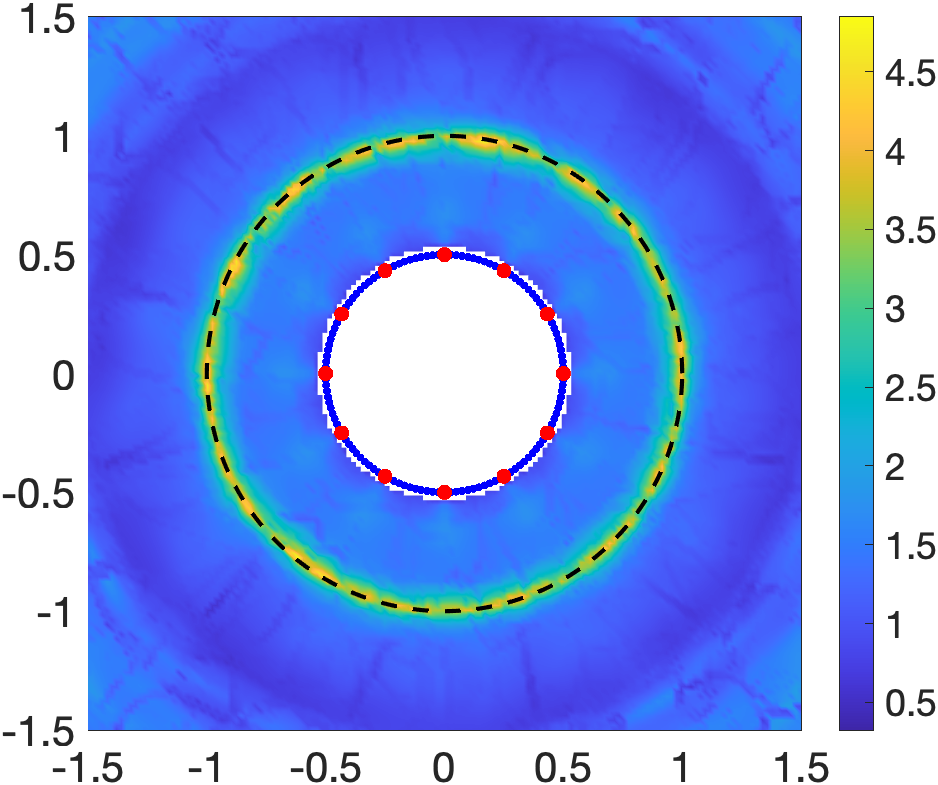}}\quad
	\subfigure[]{\includegraphics[width=0.3\textwidth]{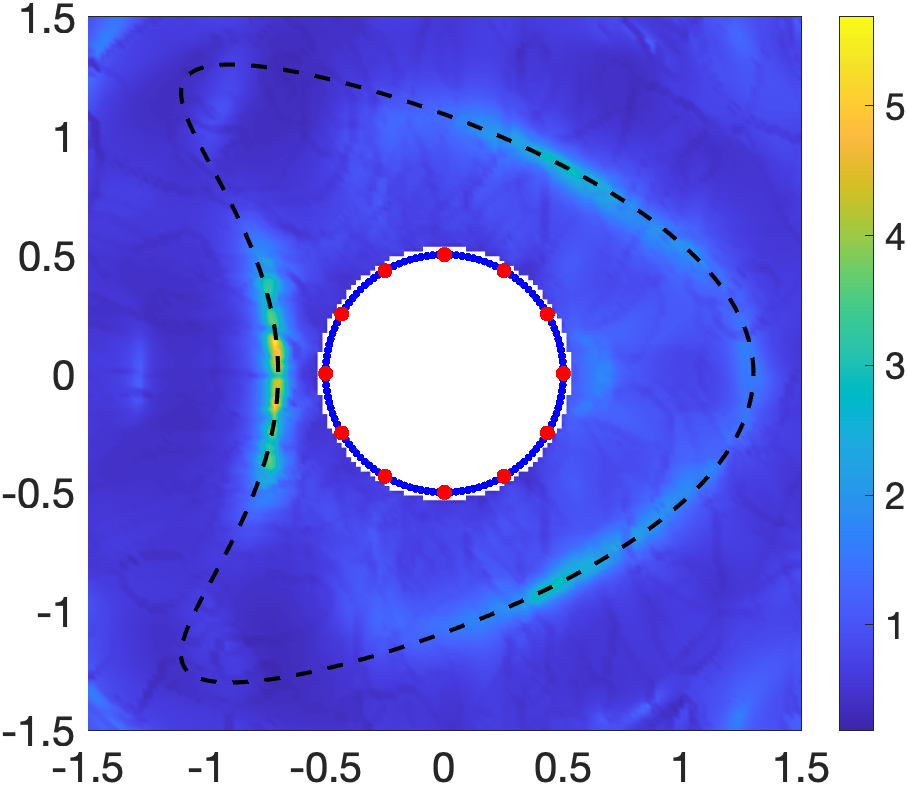}}\quad
	\subfigure[]{\includegraphics[width=0.3\textwidth]{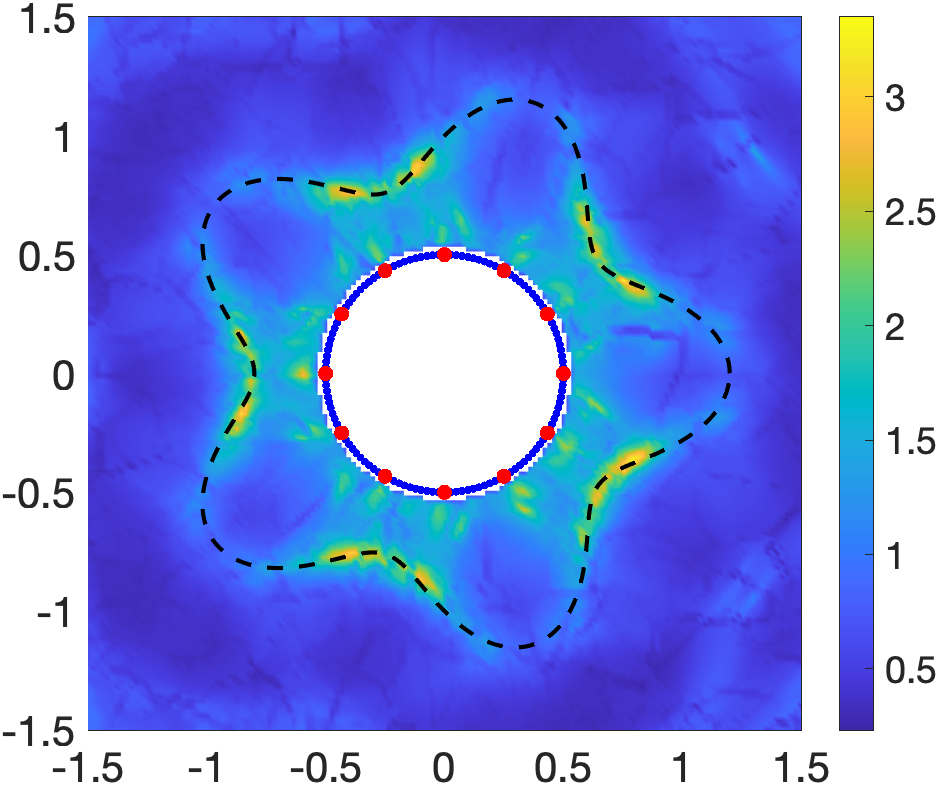}}
	\caption{Reconstruction of sound-hard cavities with 2\% noise.  Top row: $k=4$; Middle row: $k=5$; Bottom row: superposition with multiple frequencies $k=3, 3.5, \cdots, 6$.}
	\label{fig:cavity_Neumann}
\end{figure} 

\end{example}

\section{Conclusion}\label{sec:conclusion}

In this work, we propose a fast computational scheme for solving the inverse acoustic scattering problems. By the a priori sound-soft or sound-hard boundary condition of the scatterer and the Fourier-Bessel approximation of the scattered field, some novel indicator functions are proposed. Then the shape of target obstacle or cavity can be recovered by locating the zeros of associated imaging functions. Theoretical analysis is given to justify the rationale behind the algorithm and simulation experiments are conducted to validate its applicability.

This is an initial attempt in developing a Fourier-Bessel based imaging framework for tackling the inverse scattering problems. Clearly, several computational issues deserve further investigation, for example, is there any optimal choice for the truncation $N$ in the Fourier-Bessel expansion? How to select an appropriate frequency spectrum for a specific inverse scattering problem? Can we improve the accuracy and stability of the reconstructions for sound-hard scatterers? Concerning the future work, feasible extensions include the application of the imaging method to three-dimensional problems or more complicated physical configurations such as mixed boundary conditions. Further extensions to the scenarios of electromagnetic or elastic waves are also interesting topics.

\section*{Acknowledgments}
The work of D. Zhang, Y. Wu and Y. Wang were supported by NSFC grant 12171200. The work of Y. Guo was supported by NSFC grant 11971133 and the Fundamental Research Funds for the Central Universities.

\end{document}